 \newcommand{\lab}[1]{\label{#1}}                % hides labels
\newcommand{\remove}[1]{}
\newcommand\eqn[1]{(\ref{#1})}
\newcommand{\be}{\begin{equation}}
\newcommand{\bel}[1]{\begin{equation}\lab{#1}\ }
\newcommand{\ee}{\end{equation}}
\newcommand{\bea}{\begin{eqnarray}}
\newcommand{\eea}{\end{eqnarray}}
\newcommand{\bean}{\begin{eqnarray*}}
\newcommand{\eean}{\end{eqnarray*}}
\newtheorem{thm}{Theorem}%[section]
\newtheorem{con}[thm]{Conjecture}
\newtheorem{lemma}[thm]{Lemma}
\newtheorem{definition}[thm]{Definition}
\newtheorem{claim}[thm]{Claim}
\newtheorem{prop}[thm]{Proposition}
\newtheorem{obs}[thm]{Observation}
\newtheorem{remark}[thm]{Remark}
\def\proof{\noindent{\bf Proof.~}~}
\def\qed{~~\vrule height8pt width4pt depth0pt}
\def\ss{{\smallskip}}
\newcommand{\floor}[1]{\left\lfloor#1\right\rfloor}
\newcommand{\rk}[1]{\text{rank}(#1)}
\newcommand{\ind}[1]{1_{\{#1\}}}
\def\bin{{\bf Bin}}
\newcommand{\supp}[1]{{\text{supp}\left(#1\right)}}
\newcommand\core[1]{{#1_{\text{2-core}}}}
\newcommand\red[1]{{#1_{\text{red}}}}
\newcommand{\cl}[1]{{\text cl}(#1)}
\newcommand{\spn}[1]{\langle#1\rangle}
\newcommand{\cn}{/}
\newcommand{\dl}{\!\setminus\!}
\def\E{{\mathcal E}}
\def\G{{\mathcal G}}
\def\I{{\mathcal I}}
\def\P{{\mathcal P}}
\def\T{{\mathcal T}}
\def\Z{{\mathcal Z}}
\def\PG{{\text PG}}
\def\bF{{\mathbb F}}
\def\ex{{\mathbb E}}
\def\pr{{\mathbb P}}
\def\bfv{{\boldsymbol v}}
\def\bF{\mathbb{F}}
\def\Row{{\texttt{Row}}}
\def\eps{\epsilon}
\date{}
\title{Minors of matroids represented by sparse random matrices over finite fields}
\author{Pu Gao \thanks{Research supported by NSERC RGPIN-04173-2019} \\
University of Waterloo\\
pu.gao@uwaterloo.ca 
\and 
Peter Nelson \thanks{Research supported by NSERC RGPIN-03735-2023} \\
University of Waterloo\\
apnelson@uwaterloo.ca
}
\begin{document}
\maketitle

\begin{abstract}

Consider a random $n\times m$ matrix $A$ over the finite field of order $q$ where every column has precisely $k$ nonzero elements, and let $M[A]$ be the matroid represented by $A$. In the case that q=2, Cooper, Frieze and Pegden (RS\&A 2019) proved that given a fixed binary matroid $N$, if $k\ge k_N$ and $m/n\ge d_N$  where $k_N$ and $d_N$ are sufficiently large constants depending on N, then a.a.s. $M[A]$ contains $N$ as a minor. We improve their result by determining the sharp threshold (of $m/n$) for the appearance of a fixed matroid $N$ as a minor of $M[A]$, for every $k\ge 3$, and every finite field. 

\end{abstract}

%\section{Introduction}

\section{Introduction}

Random graphs were first introduced by Erd\H{o}s and R\'{e}nyi~\cite{erdds1959random,erdHos1960evolution} in 1959, marking them one of the most important subjects in the study of modern graph theory. In their seminal work~\cite{erdHos1960evolution},  they defined a random graph process to explore the evolution of random graphs. The process starts with an empty graph $G_0$ on $n$ vertices. At every step $1\le m\le \binom{n}{2}$, an edge is added to $G_{m-1}$, which is chosen uniformly from all edges that are not present in $G_{m-1}$. Researchers have focused on investigating the earliest occurrence of specific structures in this random graph process $(G_m)_{0 \leq m \leq \binom{n}{2}}$, such as trees of a certain order, cycles of particular lengths, Hamilton cycles, connected graphs, $k$-connected subgraphs, $k$-cores, and more. Since its introduction, this random graph process has undergone extensive examination and has greatly influenced research over the past half-century and continues to do so.

We may view $(G_m)_{0\le m\le \binom{n}{2}}$ as a random process for graphic matroids, where the set of the edges in $G_m$, and the family of forests in $G_m$ is the ground set, and the family of independent sets, of the corresponding matroid. In other words, the random graph process induces a random matroid process $(M[I(G_m)])_{0\le m\le \binom{n}{2}}$, where $I(G_m)$ denotes the incidence matrix of $G_m$, and $M[A]$ denotes the matroid represented by matrix $A$.

The column vectors of $I(G_m)$ are binary vectors with exactly two entries equal to 1. A natural generalisation of the random graphic matroid process described as above, is to allow column vectors to have support size (i.e.\ the number of nonzero entries) different from two, and to permit the nonzero entries to take value from other fields. Motivated by this, we introduce the following more general random matroid process.

Let ${\mathbb F}$ be a field, ${\mathbb F}^*={\mathbb F}\setminus \{0\}$ and $k\ge 3$ be a fixed integer. The case $k=2$ corresponds to minors of random graphs, which is already well understood; see Remarks~\ref{remark:k} and~\ref{remark:k=2} below. Let $\P: ({\mathbb F}^*)^k\to [0,1]$ be a permutation-invariant probability distribution over $({\mathbb F}^*)^k$; i.e.\ ${\mathcal P}(x_1,\ldots,x_k)={\mathcal P}(x_{\pi(1)},\ldots, x_{\pi(k)})$ for any permutation $\pi$ in the symmetric group $S_k$. Let $\bfv\in {\mathbb F}^n$ be the random vector defined as follows. First choose  a uniformly random subset $K\subseteq [n]$ with $|K|=k$; then set the value of $(v_j)_{j\in K}$ by the law $\P$, whereas $v_j$ is set to 0 for all $j\notin K$. Let $(\bfv_m)_{m\ge 1}$ be a sequence of random vectors each of which is an independent copy of $\bfv$. Let $(A_m)_{m\ge 0}$ be a sequence of random matrices where $A_0$ is empty and for each $m\ge 1$, $A_m=[\bfv_1,\ldots,\bfv_m]$ is obtained by including the first $m$ column vectors $\bfv_1,\ldots,\bfv_m$.  Thus, $A_m$ is a random matrix over ${\mathbb F}$ where every column vector has support size equal to $k$. We study the evolution of the matroids $(M[A_m])_{m\ge 0}$ represented by the sequence of random matrices $(A_m)_{m\ge 0}$.

%Threshold of $K_r$-minor for $r\ge 4$ is $1/n$. Fountoulakis and K\"{u}hn~\cite{fountoulakis2008order} studied the order of a largest complete minor in $\G(n,p)$.

A special case of this random matroid process where ${\mathbb F}={\mathbb F}_2$ (the binary field) has been introduced and studied earlier by Cooper, Frieze and Pegden~\cite{cooper2019minors}. Note that with ${\mathbb F}={\mathbb F}_2$, the only possible $\P$ is $\P(1,\ldots, 1)=1$. In their work, they studied the appearance of a fixed binary matroid $N$ as a minor of $M[A_m]$. While the matroid growth rate theorem by Geelen, Kung, Kabell and Whittle~\cite{geelenkabell2009, geelen2009growth} (see also Theorem~\ref{thm:rate} below) immediately implies that  $N$ appears as a minor of $M[A_m]$ for some $m=O(n^2)$, Cooper, Frieze and Pegden proved that $m=O(n)$ suffices for the appearance of an $N$-minor for this random $\bF_2$-representable matroid. More precisely, given a binary matroid $N$, there exist sufficiently large constants $k_N$ and $d_N$ such that if $k\ge k_N$ and $m\ge d_N n$, then asymptotically almost surely (a.a.s.) $M[A_m]$ contains $N$ as a minor. As further questions, they asked ``to reduce $k$, perhaps to 3, and to get precise estimates for the number of columns needed for some fixed matroid, the Fano plane for example.'' They also asked minors of $M[A_m]$ over fields other than ${\mathbb F}_2$. In this paper we answer these questions. We characterise the phase transition for the appearance of an $N$-minor, for every $N$ of fixed order. A very surprising discovery from our result is that this phase transition solely depends  on $k$, and to a significant extent, is independent of the minor $N$, the field ${\mathbb F}$, or the distribution $\P$. Define
\begin{eqnarray}
\rho_{k,d}&=&\sup\{x\in [0,1]: x= 1-\exp(-d x^{k-1})\} \lab{eq:rho}\\
d_k&=&\inf\{d>0: \rho_{k,d}-d\rho_{k,d}^{k-1}+(1-1/k)d\rho_{k,d}^k<0\} \lab{eq:dk}
\end{eqnarray}
\begin{remark}\label{remark:k}
    For every $k\ge 3$, it can be shown that $d_k$ is well defined; see e.g.\ the remark after~\cite[Theorem 1.1]{ayre2020satisfiability}. For $k=2$, it is straightforward to check that $\rho_{2,d}=0$ for all $d<1$, $\rho_{2,d}>0$ for all $d>1$, and $\rho_{2,d}-d\rho_{2,d}+\frac{1}{2} d\rho_{2,d}^2<0$ for all $d>1$, and thus $d_2=1$. For readers who are familiar with random $k$-uniform hypergraphs $\G_k(n,m)$ with $n$ vertices and $m$ hyperedges, for $k\ge 3$, $d_k$ is precisely the critical density of $km/n$ where the 2-core of $\G_k(n,m)$ has more hyperedges than vertices, and $d_2=1$ is precisely the critical density of $2m/n$ where $\G_2(n,m)$ has a giant component. It has been well understood that all graph minors of fixed order appear simultaneously in $\G_2(n,m)$ when $m/kn$ exceeds $d_2$; see e.g.\  a proof in~\cite[Section 4]{luczak1994structure}, based on an argument initially proposed by Svante Janson.   
\end{remark}

Our first result characterises the phase transition of the appearance of minors for $A_m$ over prime fields.

\begin{thm}\lab{thm:main} Let $k\ge 3$. Suppose that  ${\mathbb F}={\mathbb F}_p$ where $p$ a prime number, and let ${\mathcal P}$ be any permutation-invariant distribution on $({\mathbb F}_p^*)^k$. Let $N$ be a fixed simple ${\mathbb F}_p$-representable matroid. 
Then, for every fixed $\eps>0$ the following hold:
\begin{enumerate}[(a)]
\item[(a)] If $m<(d_k/k-\eps)n$ then a.a.s.\ $M[A_m]$ does not contain $N$ as a minor if $N$ has a circuit; 
\item[(b)] If $m>(d_k/k+\eps)n$ then a.a.s.\ $M[A_m]$ contains $N$ as a minor.  
\end{enumerate}
\end{thm}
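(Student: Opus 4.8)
\medskip
\noindent\textbf{Part (a).}
The plan is to show that if $m<(d_k/k-\eps)n$ then a.a.s.\ $M[A_m]$ is \emph{free}, i.e.\ the columns of $A_m$ are linearly independent over $\mathbb F_p$; since every minor of a free matroid is free and $N$, having a circuit, is not free, this yields (a). Now $m<(d_k/k-\eps)n$ means that $km/n$ is bounded away from $d_k$ from below, which by Remark~\ref{remark:k} is exactly the regime in which the $2$-core of the underlying $k$-uniform hypergraph has fewer hyperedges than vertices; in this regime the sparse random matrix $A_m$ has full column rank over any fixed finite field a.a.s. This is the rank theorem for sparse random matrices over finite fields (see \cite{ayre2020satisfiability} and the references therein; for $\mathbb F_2$ it is the $k$-XORSAT satisfiability threshold, and enlarging the field only decreases the expected corank). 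It can also be recovered by a direct first-moment estimate on minimal column dependencies: bounded-size dependencies contribute $o(1)$ because $k\ge 3$ (already the expected number of parallel column pairs is $O(n^{2-k})$), and macroscopic ones are exactly what $km/n<d_k$ forbids through the $2$-core count.

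\medskip
\noindent\textbf{Part (b).}
Assume $m>(d_k/k+\eps)n$. If $N$ is free the claim is immediate, since a.a.s.\ $A_m$ has $\mathrm{rank}(N)$ columns of pairwise disjoint support, giving a free restriction of the required rank; so assume $N$ has a circuit. Replacing $N$ by the projective geometry $\mathrm{PG}(\mathrm{rank}(N)-1,p)$, of which every simple $\mathbb F_p$-representable matroid of rank $\mathrm{rank}(N)$ is a restriction, we may take the target to be a fixed simple $\mathbb F_p$-matroid. Let $A'$ be the submatrix of $A_m$ on the columns lying in the $2$-core of the underlying hypergraph; as $M[A']=M[A_m]\setminus(\text{columns not in the }2\text{-core})$ is a minor of $M[A_m]$, it suffices to find $N$ as a minor of $M[A']$. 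By Remark~\ref{remark:k} together with the standard branching-process analysis of the peeling process, a.a.s.\ this $2$-core is ``supercritical'': it has $\Theta(n)$ vertices and $\Theta(n)$ hyperedges, its excess (hyperedges minus vertices) is $c(\eps)n$ with $c(\eps)>0$, it is essentially connected, and it is locally treelike with girth tending to infinity — the hypergraph analogue of the structure of the $2$-core of $\mathcal G(n,m)$ above the giant-component threshold.

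\medskip
\noindent Because this $2$-core has girth tending to infinity, $M[A']$ has a.a.s.\ \emph{no} circuit of bounded size (for $k\ge 3$ the support of a short circuit would have to be a small hypergraph with at least as many hyperedges as vertices, and a.a.s.\ none exists), so $N$ cannot be found ``directly'' — its bounded circuits must be produced by contracting long circuits of $M[A']$ down. This is the hypergraph analogue, one dimension higher, of Janson's subdivision argument for graph minors (see \cite[Section~4]{luczak1994structure}) exploited in the case $k=2$. Concretely, one fixes a representation of $N$, selects $|E(N)|$ disjoint ``branch columns'' of $A'$ inside a bounded, suitably connected region of the $2$-core, routes the prescribed dependencies of $N$ through internally disjoint long ``paths'' and ``theta''-type multi-route gadgets of the $2$-core joining these branch columns, and contracts every internal column of every gadget; since contracting a degree-$2$ vertex merges two columns while multiplying in a ratio of their entries, a long contracted gadget realises an element whose effective scalar in each surviving coordinate is a product of many entries drawn from $\mathrm{supp}(\mathcal P)$, or an $\mathbb F_p$-linear combination of several such products, with the overall rank reduced to $\mathrm{rank}(N)$.

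\medskip
\noindent\textbf{The main obstacle} — the ingredient genuinely absent from the graphic case $k=2$ and from the binary case of \cite{cooper2019minors} — is the linear-algebraic bookkeeping in this last step: one must show that among the $\Theta(n)$-fold abundance of admissible gadget configurations there is one whose contraction reproduces the \emph{specific} matroid $N$, up to a change of basis of $\mathbb F_p^{\mathrm{rank}(N)}$, rather than merely some matroid with the same incidence pattern. Primality of $p$ enters here decisively: the effective scalar of a long contracted path can be prescribed to be any element of the subgroup $H\le\mathbb F_p^{*}$ generated by $\mathrm{supp}(\mathcal P)$, and since $H\ni 1$ generates $\mathbb F_p$ additively (there is no proper subring to obstruct this when $p$ is prime), multi-route gadgets then realise every scalar of $\mathbb F_p$, regardless of $\mathcal P$. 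Coupling this with the surplus of pairwise disjoint gadgets in the supercritical $2$-core — so that, once the bounded branch structure has been placed, a sprinkling/second-moment argument supplies the remaining gadgets with the required scalars a.a.s.\ — lets one prescribe every element of the target exactly. This also accounts for the threshold location and its sharpness: below $d_k/k$ the matroid is free and part (a) applies, above $d_k/k$ the $2$-core carries the linear surplus of column dependencies the construction consumes, and the only relevant event — the birth of dependencies in $A_m$ at the $2$-core threshold $d_k$ — is insensitive to $\mathcal P$ and to $\mathbb F_p$ once $p$ is prime.
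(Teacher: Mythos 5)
Your part (a) is essentially the paper's argument: below $(d_k/k-\eps)n$ the columns of $A_m$ are a.a.s.\ linearly independent by \cite[Theorem 1.1]{ayre2020satisfiability}, so $M[A_m]$ is free and no minor of it has a circuit. (Your alternative ``first-moment'' sketch for the macroscopic dependencies is not a proof --- locating that threshold is the content of the cited theorem --- but since you invoke the correct result this does not matter.)

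Part (b) has a genuine gap: everything you label ``the main obstacle'' is asserted rather than proved, and it is precisely where the theorem lives. Concretely: (i) the graph-minor picture does not transfer to $k\ge 3$ --- contracting a column of support $k$ does not ``merge two columns while multiplying in a ratio of their entries''; it eliminates one row while mixing the other $k-1$ rows of that column into every remaining column, so a long chain of columns in the $2$-core does not contract to a single element with a well-defined effective scalar, and no mechanism is given for controlling what the contracted representation actually is. (ii) The claim that one can place disjoint branch columns and gadgets realizing \emph{all} prescribed dependencies of $N$ simultaneously, with the contraction yielding exactly $N$ rather than a degeneration, is deferred entirely to a ``sprinkling/second-moment argument'' that is never formulated. (iii) A structural claim you lean on is false: the supercritical $2$-core does not have girth tending to infinity (its Tanner graph a.a.s.\ contains Poisson-many cycles of each bounded length); what is true, and what you actually need, is that bounded dependent column sets a.a.s.\ do not exist.

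It is worth recording how the paper sidesteps the scalar-prescription problem completely in the prime case. It starts \emph{subcritically}, with $A^1\sim A_{m_1}$, $m_1=(d_k/k-\eps_1)n$, so that the $2$-core of $A^1$ is an almost-square matrix with linearly independent columns; two rounds of sprinkling and a single contraction of a huge column set $U\setminus[r]$, controlled by the inverse $B=(A'')^{-1}$ of the resulting near-square block, produce a rank-$r$ minor $M'$ with at least $\binom{r}{k}$ elements. Since $k\ge 3$, this exceeds $C_tr^2$ for large $r$, and the Geelen--Kung--Whittle growth rate theorem (Theorem~\ref{thm:rate}) then yields a $\PG(t-1,p)$-minor, hence $N$, with no gadget routing and no need to realize any particular scalar. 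The one hard technical input is Lemma~\ref{lem:key}, that every short nonzero combination of rows of $B$ has linear support, which guarantees the dense basis that makes $M'$ large. If you wish to pursue your route, you would have to build the gadget analysis from scratch, and note additionally that your supercritical $2$-core has more columns than rows, so it cannot supply the invertible block on which the paper's contraction step depends.
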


\begin{remark}
Without loss of generality we may always assume that $N$ has a circuit, as otherwise, all elements in $N$ are independent, and consequently, a.a.s.\ $N$ appears as a minor of $M[A_m]$ precisely at the step $m=\rk{N}=|N|$, as a.a.s.\ $\bfv_1,\ldots, \bfv_h$ are linearly independent if $h=O(1)$.
\end{remark}

Theorem~\ref{thm:main} is false for non-prime finite fields.  As a counterexample, consider ${\mathcal P}$ where ${\mathcal P}(1,\ldots,1)=1$ and $q=4$. Let $N$ be any ${\mathbb F}_4$-representable matroid which is not ${\mathbb F}_2$-representable. Then  for every $m$, $M[A_{m}]$ is a binary matroid, which cannot contain $N$ as a minor. Our next result confirms that Theorem~\ref{thm:main} is true for non-prime finite fields after imposing some weak condition on ${\mathcal P}$ .

\begin{thm}\lab{thm:main2} Let $k\ge 3$.  Suppose that  ${\mathbb F}={\mathbb F}_q$ where $q$ is a prime power, and ${\mathcal P}$ is a permutation-invariant distribution on $({\mathbb F}_q^*)^k$ such that ${\mathcal P}(\sigma_1,\ldots,\sigma_k)>0$ for every $(\sigma_1,\ldots,\sigma_k)\in ({\mathbb F}_q^*)^k$. Let $N$ be a fixed simple ${\mathbb F}_q$-representable matroid that contains a circuit.   
Then, for every fixed $\eps>0$ the following hold:
\begin{enumerate}[(a)]
\item[(a)] If $m<(d_k/k-\eps)n$ then a.a.s.\ $M[A_m]$ does not contain $N$ as a minor; 
\item[(b)] If $m>(d_k/k+\eps)n$ then a.a.s.\ $M[A_m]$ contains $N$ as a minor.  
\end{enumerate}
\end{thm}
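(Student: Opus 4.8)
The plan is to prove Theorem~\ref{thm:main2} by following the proof of Theorem~\ref{thm:main} and isolating the single place where primality of the field is used, replacing it with an argument that exploits the full-support hypothesis on $\P$.

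\emph{Part (a).} I would reuse the proof of Theorem~\ref{thm:main}(a) with no essential change. That proof needs only the following: when $m<(d_k/k)n$, equivalently $km/n<d_k$, asymptotically almost surely $A_m$ has full column rank over $\mathbb F_q$, so $M[A_m]$ is the free matroid on $m$ elements; since every minor of a free matroid is free while $N$, having a circuit, is not free, it follows that a.a.s.\ $M[A_m]$ has no $N$-minor. The full-column-rank statement is the rank threshold for sparse random matrices of column weight $k$ — precisely the XORSAT-type phenomenon underlying the definition of $d_k$ (cf.\ \cite{ayre2020satisfiability} and Remark~\ref{remark:k}) — and it holds over any finite field; indeed it does not even require the support assumption on $\P$ (note that the $q=4$, $\P(1,\dots,1)=1$ matrix still has the same rank over $\mathbb F_4$ as over $\mathbb F_2$, so the counterexample to Theorem~\ref{thm:main} concerns only part (b)).

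\emph{Part (b): reductions.} Let $t=\rk{N}$; since $N$ has a circuit, $t\ge 2$. As $N$ is simple and $\mathbb F_q$-representable, a representation matrix with $t$ rows exhibits $N$ as a restriction, hence a minor, of the projective geometry $PG(t-1,q)$, so it suffices to show that a.a.s.\ $M[A_m]$ has $PG(t-1,q)$ as a minor. I would then reduce to the $2$-core: let $\C_m$ be the $2$-core of the hypergraph $H_m$ on vertex set $[n]$ whose hyperedges are the supports of $\bfv_1,\dots,\bfv_m$. The columns of $A_m$ corresponding to the hyperedges of $\C_m$ span a restriction of $M[A_m]$, so it is enough to find $PG(t-1,q)$ as a minor of this restriction. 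When $m>(d_k/k+\eps)n$, a.a.s.\ $\C_m$ has $\nu$ vertices and $\mu$ hyperedges with $\mu-\nu\ge cn$ for some $c=c(k,\eps)>0$ (this is exactly the regime where the quantity in (\ref{eq:dk}) is negative), and then standard second-moment estimates produce $\Omega(n)$ pairwise vertex-disjoint copies, inside $\C_m$, of a fixed sub-hypergraph of positive excess; these copies will serve as ``gadgets.'' Since distinct gadgets involve disjoint sets of columns and the $\bfv_i$ are i.i.d., the $\P$-values carried by distinct gadgets are independent.

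\emph{Part (b): running the core construction of Theorem~\ref{thm:main}(b), and the main obstacle.} From here the plan is to run the construction underlying Theorem~\ref{thm:main}(b) on this restriction. Roughly, that construction selects $O(1)$ of the disjoint gadgets — a group of $t$ to supply a spanning basis and one for each of the $\frac{q^{t}-1}{q-1}$ points of $PG(t-1,q)$ — contracts a suitable spanning independent set of the remaining core columns so that the ambient space collapses onto a quotient $W$ of bounded dimension at least $t$ into which every selected gadget maps while remaining intact, and then observes that deleting and contracting columns \emph{inside} a single gadget realizes in $W$ a vector whose coordinate pattern is a prescribed function of the $\P$-values on that gadget's columns. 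With $\Omega(n)$ independent gadgets available for each role, a.a.s.\ one can find, for every line of a fixed $t$-dimensional subspace of $W$, a gadget whose coefficient pattern realizes a point on that line, and this exhibits $PG(t-1,q)$, hence $N$, as a minor. The only ingredient of Theorem~\ref{thm:main}(b) that is sensitive to the field is this realization step: over $\mathbb F_p$ one needs only that a nonzero entry is invertible and that the patterns produced fill the relevant $\mathbb F_p$-subspace, whereas over a non-prime $\mathbb F_q$ a gadget must be forceable to realize an \emph{arbitrary} vector, i.e.\ its columns must be able to carry any prescribed pattern in $(\mathbb F_q^*)^{O(1)}$ — and this is exactly what the hypothesis $\P(\sigma_1,\dots,\sigma_k)>0$ for all $(\sigma_1,\dots,\sigma_k)\in(\mathbb F_q^*)^k$ supplies, and exactly what fails for $q=4$, $\P(1,\dots,1)=1$, where the realizable patterns never leave $\mathbb F_2$. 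I therefore expect the main obstacle to be re-proving this gadget-realization lemma over an arbitrary finite field (checking in particular that ratios and sums of $\P$-values produced by contractions along a gadget still have full support in $\mathbb F_q^*$, which they do) together with the bookkeeping that the $\Omega(n)$ gadgets behave independently and that the contracted spanning independent set can be chosen so the gadgets' images in $W$ are as unconstrained as the realization requires; the combinatorial skeleton — the $2$-core, the disjoint gadgets, the collapse onto a bounded-dimensional quotient — is identical to the prime case and can be quoted.
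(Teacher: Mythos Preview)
Your treatment of part~(a) is correct and matches the paper exactly.

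For part~(b), however, the construction you describe as ``the construction underlying Theorem~\ref{thm:main}(b)'' is not what the paper does, and your substitute has a genuine gap. The paper does \emph{not} work directly with the supercritical $2$-core, and it does \emph{not} locate $\Omega(n)$ vertex-disjoint positive-excess gadgets by second-moment arguments. Instead it starts with a slightly \emph{subcritical} matrix $A^1\sim A_{m_1}$, $m_1=(d_k/k-\eps_1)n$, whose $2$-core is nearly square with linearly independent columns (Lemmas~\ref{lem:initial},~\ref{lem:rows}); a first round of sprinkled columns completes this to an invertible $n'\times n'$ submatrix $A''$; and the hard, field-independent core of the argument is the Key Lemma~\ref{lem:key} (the whole of Section~\ref{sec:key}), which says that every nonzero linear combination of the first $r$ rows of $B=(A'')^{-1}$ has $\Omega(n')$ nonzero entries. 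This yields a $\delta$-dense basis in $B^{[r]}$ (Lemma~\ref{lem:denseBasis}), so that a second round of sprinkled columns, after contracting $U\setminus[r]$, become the vectors $B^{[r]}v$ and hit any prescribed target in $\bF_q^r$ with probability $\Omega(1)$.

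The step you pass over --- ``contracts a suitable spanning independent set of the remaining core columns so that the ambient space collapses onto a quotient $W$ of bounded dimension \dots\ into which every selected gadget maps while remaining intact'' --- is precisely where the difficulty lives. There is no a priori reason why an arbitrary rank-$r$ quotient map should preserve anything useful about your gadgets; their images could all lie in a proper subspace or collapse to zero. Controlling the quotient is exactly what Lemma~\ref{lem:key} does in the paper's setup, and your outline offers no replacement for it. You are right that the only field-sensitive step is the final realization (Lemma~\ref{lem:minor}) and that full support of $\P$ is what makes the prime-power case go through; in the paper this is used to show that $A^6$ is row-equivalent to a $3$-complete matrix, after which a self-contained step-up argument (Lemmas~\ref{stepup} and~\ref{lem:qminor}) produces the $\PG(t-1,q)$-minor, replacing the Growth Rate Theorem used over prime fields. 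But the combinatorial skeleton you claim to be quoting is not the paper's, and as written it does not stand on its own.
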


\begin{remark}\label{remark:k=2}
The stipulation that $k = 3$ in the above results is necessary. When $k = 2$, the matroid $M[A_m]$ is defined to have a $\mathbb{F}_q$-representation in which each column has support at most $2$; matroids with this property are known as ${\mathbb F}_q$-frame matroids, and they form a minor-closed class. It follows that, if $k =2$ and $N$ is \emph{not} an ${\mathbb{F}}_q$-frame matroid, then $M[A_m]$ will never have an $N$-minor. Conversely, we believe that if $k=2$ and $N$ \emph{is} a $\mathbb{F}_q$-frame matroid of corank at least $2$, then $N$ will satisfy the conclusion of Theorem~\ref{thm:main2} for graph-theoretic reasons similar to those outlined in Remark~\ref{remark:k}. The proof is completely different from that of $k\ge 3$ in this paper, and is not trivial; we will discuss that in future work.
\end{remark}

A significant part of the proof for Theorem~\ref{thm:main2} is the same as for Theorem~\ref{thm:main}, and indeed holds for any general field ${\mathbb F}$. Thus we think that results like Theorem~\ref{thm:main2} would be true for general fields by restricting to certain distributions $\P$. See more discussions in Remark~\ref{r:field} below. As a concrete example, we make the following conjecture for the minors of $M[A_m]$
over ${\mathbb Q}$.

\begin{con}\lab{con:rational} Let $k\ge 3$.  Suppose that  ${\mathbb F}={\mathbb Q}$, the field of rational numbers, and ${\mathcal P}(1,\ldots,1)=1$. Let $N$ be a fixed simple ${\mathbb Q}$-representable matroid that contains a circuit.  Then $d_k/k$ is the threshold (of $m/n$) for the appearance of $N$ as a minor of $M[A_m]$.
\end{con}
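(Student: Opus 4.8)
\medskip
\noindent\emph{Towards a proof.}
The plan is to obtain the lower bound of Conjecture~\ref{con:rational} from the argument already used for Theorem~\ref{thm:main}(a), and the upper bound by adapting the proof of Theorem~\ref{thm:main}(b) to the rational field. For the lower bound, the proof of Theorem~\ref{thm:main}(a) uses only that $N$ has a circuit together with the fact that, below density $d_k/k$, the $2$-core of the $k$-uniform hypergraph $H$ whose hyperedges are the supports of the columns of $A_m$ a.a.s.\ has fewer hyperedges than vertices and hence no small minor dense enough to host a circuit of $N$; this is insensitive to the field and to $\P$, so it applies over ${\mathbb Q}$. For the upper bound, the first step is the standard reduction to the $2$-core: a column of $A_m$ with a private row is a coloop of $M[A_m]$, and peeling such columns (equivalently, peeling degree-$\le1$ vertices of $H$) gives $M[A_m]=M[A']\oplus F$, where $F$ is free on the $m-m'=\Theta(n)$ peeled columns and $A'$ is the $\{0,1\}$-incidence matrix over ${\mathbb Q}$, with exactly $k$ ones per column, of the $2$-core $H'$ of $H$. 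Splitting off the coloops of $N$ as $N=N_0\oplus(\text{coloops})$ and using that $F$ has $\Theta(n)\gg|N|$ elements, it suffices to find an $N_0$-minor in $M[A']$; we henceforth write $N$ for the simple, coloop-free, ${\mathbb Q}$-representable matroid $N_0$, which still has a circuit.

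\medskip
Above density $d_k/k$, the field-independent part of the analysis of the $2$-core (the regime of Remark~\ref{remark:k}, and essentially the field-free part of the proofs of Theorems~\ref{thm:main} and~\ref{thm:main2}) should give, a.a.s., that $H'$ has minimum degree $\ge2$ and excess $m'-n'\ge\delta n$ for some $\delta=\delta(\eps,k)>0$, and moreover that $H'$ is \emph{topologically rich}: for any fixed $k$-uniform hypergraph $G$ whose matroidal complexity can be ``routed through long paths'' and any length scale $\ell=\ell(n)$ with $\omega(1)\le\ell\le n^{o(1)}$, a.a.s.\ $H'$ contains a copy of $G$ in which each designated hyperedge is replaced by an internally vertex-disjoint path of hyperedges of length in $[\ell,2\ell]$ (with the parity of each length prescribable, using the $\Theta(n)$ excess), the whole configuration spanning $n^{o(1)}$ vertices. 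The conjecture then reduces to a \emph{gadget lemma}: there is a fixed finite $k$-uniform hypergraph $G_N$ of minimum degree $\ge2$, a set $E_N$ of $|N|$ distinguished hyperedges, and a designation of the remaining hyperedges as ``wires to be subdivided'', such that for every sufficiently long valid subdivision $\widehat G$ of $G_N$, contracting in $M[\widehat G]$ --- taken over ${\mathbb Q}$ with all nonzero entries equal to $1$ --- every hyperedge outside $E_N$ yields a matroid isomorphic to $N$. Granting this we are done: a.a.s.\ $H'$ contains such a $\widehat G$, and in $M[A']$ deleting every hyperedge outside $\widehat G$ and contracting $E(\widehat G)\setminus E_N$ leaves exactly this $N$-minor.

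\medskip
The gadget lemma is the field-specific heart of the matter. The realization step in the proof of Theorem~\ref{thm:main} already works over prime fields with an arbitrary permutation-invariant $\P$ --- in particular over ${\mathbb F}_p$ with all-ones columns --- and in doing so disposes of the ``decoration'' vertices that long subdivided paths carry; the genuinely new feature over ${\mathbb Q}$ is that an integral representation of $N$ may have entries larger than any fixed bound (only finitely many values for a fixed $N$), and that the probabilistic step must be run for the specific gadget rather than union-bounded over finitely many local configurations. Concretely, fix an integral representation $B\in\bbZ^{r\times|N|}$ of $N$. In the quotient of ${\mathbb Q}^{V(\widehat G)}$ by the contracted hyperedges one wants a rank-$r$ space with a distinguished coordinate basis and the image of the hyperedge of $E_N$ indexed by $j$ equal to the $j$-th column of $B$. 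Two basic moves are available: (i) for any prescribed integer vector $\bfb$ in the coordinates, force $\bar\bfe_a=\bfb$ on a chosen vertex $a$ by attaching a family of weight-$k$ $\{0,1\}$-hyperedges whose ${\mathbb Q}$-span contains $\bfe_a-\bfb$ and contracting them (the case $\bfb=0$ being the ``zeroing'' of $a$); and (ii) subdividing a hyperedge into a long path acts --- once the decorations along it have themselves been zeroed via (i) --- as an identification of the two endpoints up to a sign fixed by the parity of the length. Combining these one builds, for each $j$, a vertex $a_j$ with $\bar\bfe_{a_j}$ equal to the $j$-th column of $B$, and lets the hyperedge of $E_N$ indexed by $j$ pass through $a_j$ with its other $k-1$ vertices zeroed, so that $M[\widehat G]/(E(\widehat G)\setminus E_N)\cong M[B]=N$; it then remains to verify that every vertex of $G_N$ has degree $\ge2$, that no dependency beyond those of $N$ arises among $E_N$ (the quotient carrying a faithful projective copy of $B$), and that the subdivisions change nothing.

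\medskip
The step I expect to be the main obstacle is making moves (i) and (ii) coexist: carrying out the arithmetic $\{0,1\}$-gadgets while keeping every vertex of $G_N$ of degree $\ge2$ and keeping the whole construction robust under the long subdivisions that are forced on us. The tension is real. A first-moment estimate shows that no fixed sub-hypergraph dense enough to carry even one circuit occurs a.a.s.\ in $H'$ --- to occur a.a.s.\ a fixed $k$-uniform configuration must have more than $(k-1)$ times as many vertices as hyperedges, and its incidence matroid is then circuit-free --- so \emph{every} dependency of $N$ has to be manufactured out of long subdivided paths supported on the $\Theta(n)$ excess of $H'$, and the $k-2$ decoration vertices on each hyperedge of such a path must be disposed of without reintroducing density. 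Reconciling this with the rigidity of all-ones columns over ${\mathbb Q}$, by extending the corresponding construction in the proof of Theorem~\ref{thm:main}, is the crux; the rest is bookkeeping or a field-free appeal to the $2$-core structure theory behind Remark~\ref{remark:k}. This same scheme --- an integral representation together with arithmetic $\{0,1\}$-gadgets and subdivision-robust wires --- is, we expect, what would make analogues of Theorem~\ref{thm:main2} go through over other fields with suitably generic distributions, as anticipated in Remark~\ref{r:field}.
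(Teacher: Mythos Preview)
This statement is a conjecture that the paper does not prove, so there is no proof to compare against directly; what the paper does say (Remarks~\ref{r:field} and~\ref{r:fieldgen}(b)) is that its entire supercritical machinery in Section~\ref{sec:sup} works over any field except for Lemma~\ref{lem:minor}, and that the subcritical case over $\mathbb Q$ with all-ones columns follows from~\cite{ayre2020satisfiability} by reduction mod~$p$. On the subcritical side you reach the right conclusion but by the wrong mechanism: the paper's proof of Theorem~\ref{thm:main}(a) in Section~\ref{sec:sub} is not a 2-core density statement but the algebraic fact that a.a.s.\ the columns of $A_m$ are linearly independent, hence $M[A_m]$ is free. ``Fewer hyperedges than vertices in the 2-core'' is necessary for full column rank but not sufficient, and the full-rank result is \emph{not} field-insensitive --- it is proved in~\cite{ayre2020satisfiability} for finite fields and transfers to $\mathbb Q$ with $0/1$ entries only because an integer dependency reduces mod~$p$ to an $\mathbb F_p$-dependency. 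Your phrase ``no small minor dense enough to host a circuit of $N$'' does not correspond to any step in the paper and would not, on its own, exclude $N$-minors.

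On the supercritical side your route is genuinely different from the paper's and from the obstacle the paper isolates. The paper sprinkles to saturate a square piece $A''$, inverts it, shows (Lemma~\ref{lem:key}) that short combinations of the rows of $(A'')^{-1}$ have linear support, and then --- crucially using $|\mathbb F_q^r|<\infty$ --- pigeonholes to a $\delta$-dense basis (Lemma~\ref{lem:denseBasis}) before applying the growth-rate theorem or Lemmas~\ref{stepup}--\ref{lem:qminor}; over $\mathbb Q$ both the pigeonhole and the growth-rate/step-up steps collapse, which is exactly why the paper stops at a conjecture. Your gadget-and-subdivision programme sidesteps this, but both pillars are assertions. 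The ``topological richness'' of the supercritical $k$-uniform 2-core, in the strong form you need (embedding arbitrary fixed templates via long internally-disjoint hyperedge paths of prescribable parity), is not established anywhere in the paper and is not a consequence of Remark~\ref{remark:k} or of the field-free parts of Section~\ref{sec:sup}. And the gadget lemma is the whole difficulty: you must show that every simple $\mathbb Q$-representable $N$ is a minor of the $\{0,1\}$ incidence matroid over $\mathbb Q$ of some finite $k$-uniform hypergraph of minimum degree $\ge 2$, stably under long subdivision of the wires. Your moves (i)--(ii) gesture at this, but no $G_N$ is exhibited even for a single nontrivial $N$, and there is no verification that the contractions introduce no extra dependencies among the $E_N$ columns. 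You already flag this as the crux; as written it is a programme, not a proof.
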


If we were to change $\mathbb{Q}$ to $\mathbb{R}$ everywhere, this conjecture would fail if $N$ were not chosen to be $\mathbb{Q}$-representable, since any $M[A_m]$ generated by the process would be $\mathbb{Q}$-representable, as would all its minors. However, we believe that arbitrary minors should appear as long as the random entries we are using allow for it. We make a stronger conjecture to that effect. 

\begin{con}\lab{con:fieldgen} Let $k\ge 3$.  Let $\mathbb F$ be a field, and let $\mathcal{P}$ be a permutation-invariant probability distribution on $\bF^k$. For each $r \in \mathbb{F}$, let $\beta_r$ denote the probability that a vector chosen according to $\mathcal{P}$ contains two nonzero entries whose ratio is $r$. Let $\bF_0$ be the subfield of $\mathbb{F}$ generated by the set $G = \{r : \beta_r > 0\}$. Let $N$ be a fixed simple $\mathbb{F}_0$-representable matroid that contains a circuit. Then $d_k/k$ is the threshold for the appearance of $N$ as a minor of $M[A_m]$.
\end{con}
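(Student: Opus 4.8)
The plan is to prove the two halves separately, following the template of Theorems~\ref{thm:main} and~\ref{thm:main2}: the lower half will be almost immediate from the field-independent part of that argument, and the upper half will need an $\bF_0$-version of its minor construction. For $m<(d_k/k-\eps)n$ one wants that a.a.s.\ $M[A_m]$ has no minor with a circuit, which by the hypothesis on $N$ excludes an $N$-minor. Since every minor of a free matroid is free, it is enough that $A_m$ a.a.s.\ has full column rank over $\bF$; this is the satisfiability/rank threshold for sparse random linear systems, and $d_k/k$ is precisely its critical density (below it, $\core{\cH}$ has fewer hyperedges than vertices and no ``small excess'' sub-hypergraph occurs a.a.s., and one shows this forces full rank). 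For $\bF=\bF_p$ this is known; for $\bF={\mathbb Q}$ with $0/1$ entries it follows from the $\bF_2$ case, since for a $0/1$ matrix the rank over ${\mathbb Q}$ dominates the rank over $\bF_2$; over a general field it requires rerunning the combinatorial and interpolation-method arguments, which I expect is routine in spirit but is a genuine ingredient.

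For the upper bound, suppose $m>(d_k/k+\eps)n$ and pass to the $2$-core $\core{\cH}$ of the support hypergraph; its column matroid is a restriction, hence a minor, of $M[A_m]$, so it suffices to find $N$ inside it. A.a.s.\ $\core{\cH}$ is super-critical: $\Theta(n)$ vertices, minimum degree $\ge 2$, excess (hyperedges minus vertices) $\ge\delta n$ for some $\delta=\delta(\eps)>0$, and uniformly random given its degree sequence. From this one extracts the toolbox: (i) the kernel $\kernel{\cH}$, obtained by suppressing degree-$2$ vertices, has $\Theta(n)$ edges, minimum degree $\ge 3$, and expands; (ii) $\core{\cH}$ contains $\Omega(n)$ vertex-disjoint bounded-size ``grounding'' sub-hypergraphs $T$ such that contracting all of $T$ puts $e_u$ into the contracted span for every $u\in V(T)$; (iii) between any bounded family of prescribed locations one can route pairwise vertex-disjoint ``hyperpaths'', and can choose the routes so that the telescoping product of within-hyperedge ratios along a path realises any prescribed element of the group $\spn{G}$ (the sign is free, as a length-$\ell$ path contributes $(-1)^{\ell}$ times the ratio-product). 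Note also that $M[A_m]$ is always $\bF_0$-representable: scaling each column to have first nonzero entry $1$ puts its entries in $\{0\}\cup G\subseteq\bF_0$, so $\bF_0$ is the largest field one could hope to reach, which is why it appears in the hypothesis on $N$.

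To assemble $N$, fix an $\bF_0$-representation $R=[\,I_r\mid R'\,]$ of $N$. Choose $r$ vertex-disjoint ``branch'' vertices $q_1,\dots,q_r$ in $\core{\cH}$ and write $f_j$ for the image of $e_{q_j}$ in the quotient by the span of the to-be-contracted columns. For each column $w=(w_1,\dots,w_r)$ of $R$ build a constant-size ``realisation gadget'': an output hyperedge together with $O(1)$ auxiliary hyperedges, with hyperpaths carrying each $e_{q_j}$ to the output hyperedge with scalar $w_j$, the multiplicative part of $w_j$ coming from a ratio-product in $\spn{G}$, the additive part from adding the contributions of several vertices of the output hyperedge, and the additive inverses needed in characteristic $0$ from small circuits of $\core{\cH}$; here hyperedge size at least $3$ (required anyway by Remark~\ref{remark:k=2}) provides the spare vertices both to route and to discard, with all remaining vertices of every hyperedge used attached to grounding gadgets so their basis vectors fall into the contracted span. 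Since there are only $O(1)$ gadgets, (i)--(iii) let us make them pairwise vertex-disjoint. Finally, contract all hyperedges except the $t$ output hyperedges and delete all hyperedges outside the gadgets: the contraction kills every auxiliary and grounding vertex and transports the $q_j$ correctly, so the $t$ output columns become exactly the columns of $R$ in the basis $f_1,\dots,f_r$, and $M[A_m]/C\setminus D\cong N$.

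I expect the difficulty to be twofold. First, on the lower-bound side, the rank threshold over an arbitrary field $\bF$: known for prime fields, but in general the combinatorial and interpolation arguments must be redone. Second, and more seriously, on the upper-bound side: proving that the scalars synthesisable with gadgets are exactly $\bF_0$ --- the multiplicative structure is handled by ratio-products along hyperpaths, but one must show the additive combinations, and in characteristic $0$ the necessary additive inverses, can all be produced by constant-size gadgets that are \emph{guaranteed} to exist near suitably chosen vertices of the super-critical $2$-core --- together with the disjointness and parity bookkeeping for all $k$ vertices of every hyperedge touched. The probabilistic facts about the super-critical $2$-core and its kernel follow known templates; the genuinely new work is in marrying the matroid construction to the field $\bF_0$ generated by the admissible ratios.
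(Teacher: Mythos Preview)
This statement is labelled a conjecture in the paper (Conjecture~\ref{con:fieldgen}) and is not proved there; Remark~\ref{r:fieldgen} explicitly discusses its open status, so there is no proof in the paper to compare against. In particular, Remark~\ref{r:fieldgen}(b) identifies the subcritical half as genuinely open over general $\bF$: full column rank of $A_m$ for $m<(d_k/k-\eps)n$ is known only for finite fields (via~\cite{ayre2020satisfiability}) and, by reduction modulo a prime, for certain integer-valued $\mathcal{P}$ over $\mathbb{Q}$. Your clause ``over a general field it requires rerunning the combinatorial and interpolation-method arguments, which I expect is routine in spirit'' is exactly the step the authors flag as open and do \emph{not} regard as routine; the paper even poses the full-rank question over $\mathbb{Q}$ for general permutation-invariant $\mathcal{P}$ as interesting in its own right.

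Your supercritical plan is also quite different from what the paper does in the finite-field cases it actually handles (Section~\ref{sec:sup}): there the argument goes through the inverse matrix $B=(A'')^{-1}$, the Key Lemma~\ref{lem:key} on dense supports of short row-combinations of $B$, a pigeonhole over $\bF_q^r$ to extract a $\delta$-dense basis (Lemma~\ref{lem:denseBasis}), a second sprinkling round, and finally either the growth-rate Theorem~\ref{thm:rate} or the $\ell$-completeness step-up of Lemmas~\ref{stepup}--\ref{lem:qminor}; both end-games use $|\bF_q|<\infty$ essentially, which is why the paper stops at finite fields (Remark~\ref{r:field}). Your explicit-gadget route is plausible in outline, and since $G$ is closed under inversion one can clear denominators and work over the subring $R_0\subseteq\bF$ generated by $G$ rather than $\bF_0$ itself, so division per se may be avoidable. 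But the gap you yourself name is real and not resolved: realising an arbitrary element of $R_0$ by a local gadget requires gadgets whose size and internal structure depend on the arithmetic complexity of the chosen representation of $N$, not just on $|N|$, and you have not argued that such specific ($N$-dependent, structurally prescribed) sub-hypergraphs occur a.a.s., vertex-disjointly and anchored at prescribed branch vertices, inside the random supercritical $2$-core. That existence-and-routing claim is the heart of the matter, and neither your sketch nor the paper supplies it for general $\bF$.
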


\begin{remark}\lab{r:fieldgen}
    \begin{enumerate}
        \item[(a)] 
Note that we always have $1 \in G$, so $\mathbb{F}_0$ contains $\mathbb{Q}$ in the characteristic-zero case and contains the prime subfield of $\bF$ in the positive characteristic case. Theorem~\ref{thm:main2} treats a special case where $\bF$ is finite, and every vector in $(\bF^*)^k$ appears with positive probability in $\P$, which gives the hypotheses of Conjecture~\ref{con:fieldgen} with $G = \bF^*$ and hence $\bF_0 = \bF$. So our result confirms a special case of the above. In fact, our techniques can be extended to verify the above conjecture for all finite $\bF$. 
\item[(b)] Suppose $m<(d_k/k-\eps)n$. If the column vectors of $A_m$ are not a.a.s.\ linearly independent then the subcritical case of  Conjecture~\ref{con:fieldgen} cannot be true. However, there is no result in the literature yet guaranteeing this for an arbitrary field $\bF$. The best known result regarding the rank of $A_m$ is given below in Theorem~\ref{thm:rank}, which does not imply linear independence of column vectors of $A_m$. In the case that ${\mathbb F}$ is a finite field, the linear independence of column vectors of $A_m$ has been shown~\cite[Theorem 1.1]{ayre2020satisfiability}. If $\bF={\mathbb Q}$ and $\P$ is supported on $(1,\ldots,1)$ only, then the linear independence of column vectors of $A_m$ in ${\mathbb Q}$ follows as a corollary of~\cite[Theorem 1.1]{ayre2020satisfiability}, since entries in $A_m$ are integers and linear dependency in ${\mathbb Q}$ would imply the linear independence in $\bF_p$ for any prime number $p$. The same argument holds if $\P$ is supported on $S^k$ for certain properly chosen subsets $S\subseteq {\mathbb Q}$. It would already be very interesting to prove or refute that the column vectors of $A_m$ are linearly independent for every permutation-invariant probability distribution on ${\mathbb Q}^k$. 
    \end{enumerate}
\end{remark}
In addition to minors, there are several other fundamental questions that warrant answers when considering random matroids. These include determining the rank, the number of bases, the occurrence of circuits of specific lengths, the connectivity, the number of submatroids isomorphic to a give matroid, and minors of matroids with an order that is a function of the growing parameter $n$, the automorphism group of the random matroid, etc. The rank of $M[A_m]$ can be deduced by examining the rank of the corresponding $A_m$ matrix, as established through previous research on the rank of random sparse matrices~\cite{coja2020rank}.

\begin{thm}(Corollary of~\cite[Theorem 1.1]{coja2020rank})
For any field ${\mathbb F}$, integer $k\ge 1$, permutation-invariant distribution $\P$, nonnegative real $d$, and $m\sim d n/k$,
\[
\lim_{n\to\infty}\frac{\rk{M[A_m]}}{n} = 1- \max_{\alpha\in [0,1]} \left\{\exp(-d \alpha^{k-1}) -\frac{d}{k}(1-k\alpha^{k-1}+(k-1)\alpha^k)\right\}.
\]
\label{thm:rank}
\end{thm}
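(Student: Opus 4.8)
The plan is to obtain Theorem~\ref{thm:rank} directly from~\cite[Theorem 1.1]{coja2020rank}, together with an elementary simplification of the variational formula appearing there. By the definition of the matroid represented by a matrix, $\rk{M[A_m]}$ is the rank of $A_m$ over $\mathbb{F}$, and since rank is invariant under transposition this equals $\rk{A_m^{\top}}$; so it suffices to analyse $\rk{A_m}$. Now $A_m$ is an $n\times m$ random matrix over $\mathbb{F}$ whose $m$ columns are independent, each column having its $k$ nonzero entries in a uniformly random $k$-subset of $[n]$ with values drawn (jointly, permutation-invariantly) from $\mathcal{P}$; this is precisely an instance of the sparse random matrix model whose rank is pinned down in~\cite[Theorem 1.1]{coja2020rank}. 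The parameter $d$ enters because, when $m\sim dn/k$, the number of nonzero entries in a fixed row of $A_m$ is $\mathrm{Bin}(m,k/n)$, which converges to $\mathrm{Po}(d)$, so $A_m$ has fixed column weight $k$ and asymptotically $\mathrm{Po}(d)$ row weights.

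I would then invoke~\cite[Theorem 1.1]{coja2020rank} to write $\lim_{n\to\infty} n^{-1}\rk{M[A_m]} = 1 - \lim_{n\to\infty} n^{-1}\nul{A_m}$ as the value of the variational problem in that theorem, specialised to column-weight generating polynomial $x\mapsto x^k$ and Poisson row-weight generating function $x\mapsto e^{d(x-1)}$. It remains to check that this specialisation collapses to $\max_{\alpha\in[0,1]}\Phi(\alpha)$, where
\[
\Phi(\alpha) = \exp(-d\alpha^{k-1}) - \tfrac{d}{k}\bigl(1 - k\alpha^{k-1} + (k-1)\alpha^k\bigr),
\]
which is exactly the stated formula. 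A convenient guide for this computation is the identity $\Phi'(\alpha) = d(k-1)\alpha^{k-2}\bigl[(1-e^{-d\alpha^{k-1}}) - \alpha\bigr]$: it shows that the interior stationary points of $\Phi$ are precisely the fixed points of $\alpha = 1 - e^{-d\alpha^{k-1}}$ — the largest of which is $\rho_{k,d}$ from~\eqref{eq:rho} — and that $\Phi$ has a local minimum at the smaller positive fixed point, so the maximum over $[0,1]$ is attained either at $\alpha=0$, where $\Phi(0) = 1 - d/k$ (the "columns asymptotically independent" regime, $d\le d_k$), or at $\alpha=\rho_{k,d}$, where $1 - \Phi(\rho_{k,d}) = \tfrac dk + \bigl(\rho_{k,d} - d\rho_{k,d}^{k-1} + (1-\tfrac1k)d\rho_{k,d}^k\bigr)$; the parenthesised term is exactly the quantity whose sign defines $d_k$ in~\eqref{eq:dk}. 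Thus Theorem~\ref{thm:rank} exhibits the very same critical density $d_k$ that underlies Theorems~\ref{thm:main} and~\ref{thm:main2}.

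The argument is therefore mostly bookkeeping, and I expect the two genuinely delicate points to be the following. First, one must confirm that permutation-invariance of $\mathcal{P}$ alone (rather than, say, the nonzero entries being uniform on $\mathbb{F}^*$, or independent) meets the hypotheses of~\cite[Theorem 1.1]{coja2020rank}; if it does not do so verbatim, one reduces to a covered case, for instance by conditioning on the pattern of nonzero entries and using that the rank of such a matrix over a field is insensitive to the precise nonzero values once they are sufficiently generic. This is the one place where $\mathcal{P}$ enters the proof, and it is where care is needed about exactly which generality of~\cite{coja2020rank} is being applied. Second, one must actually carry out the reduction of the general variational formula of~\cite{coja2020rank} to the single-variable expression $\Phi$ above — in particular verifying that the relevant optimisation range is $[0,1]$ and that, for $k\ge 3$, the intermediate unstable fixed point of $\alpha = 1-e^{-d\alpha^{k-1}}$ does not interfere with identifying the maximiser. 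Neither point is conceptually hard, but the first requires some attention to the precise statement being cited.
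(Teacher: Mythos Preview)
The paper gives no proof of this statement beyond labelling it a corollary of~\cite[Theorem~1.1]{coja2020rank} and remarking, in the paragraph that follows, that the maximum on the right-hand side is attained at $\alpha=0$ when $d<d_k$. Your proposal follows exactly this route --- invoke the cited rank theorem and specialise its variational formula --- but supplies considerably more detail than the paper does, including the derivative computation $\Phi'(\alpha)=d(k-1)\alpha^{k-2}\bigl[(1-e^{-d\alpha^{k-1}})-\alpha\bigr]$, which correctly identifies the interior stationary points as the fixed points of $\alpha=1-e^{-d\alpha^{k-1}}$ and shows that the global maximum on $[0,1]$ is attained at $0$ or at $\rho_{k,d}$. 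Your evaluations $\Phi(0)=1-d/k$ and $1-\Phi(\rho_{k,d})=\tfrac{d}{k}+\bigl(\rho_{k,d}-d\rho_{k,d}^{k-1}+(1-\tfrac1k)d\rho_{k,d}^{k}\bigr)$ are also correct and make the link to~\eqref{eq:dk} explicit.

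One caution on your first ``delicate point''. Your proposed fallback --- condition on the support pattern and use that ``the rank of such a matrix over a field is insensitive to the precise nonzero values once they are sufficiently generic'' --- is not a valid reduction. Over a finite field there is no useful notion of generic here: for a fixed support pattern the rank genuinely depends on the nonzero entries (e.g.\ over $\mathbb F_3$ the pair of columns $(1,1)^{\top},(1,1)^{\top}$ is dependent while $(1,1)^{\top},(1,2)^{\top}$ is not), so one cannot strip away $\mathcal P$ this way. The reason the limiting formula is independent of $\mathcal P$ is precisely the content of~\cite{coja2020rank}, not something one can sidestep. Fortunately the fallback is unnecessary: the result in~\cite{coja2020rank} already covers arbitrary distributions on the nonzero entries, so permutation-invariance of $\mathcal P$ suffices and the first delicate point dissolves.
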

With some calculations, it is easy to see that the maximisation problem on the right hand side above is attained at $\alpha=0$ for $d<d_k$. That is, if $d<d_k$ then a.a.s.\ $\rk{M[A_m]}\sim m$. As remarked above, in the case that ${\mathbb F}$ is a finite field, a stronger full-column-rank result~\cite[Theorem 1.1]{ayre2020satisfiability} is known if $d<d_k$, and this is what we use in Section~\ref{sec:sub} to prove Theorems~\ref{thm:main}(a) and~\ref{thm:main2}(a).

While the other questions such as the connectivity, the circuits, etc.\ have not been specifically studied for $M[A_m]$, they have been investigated in other random matroid models, which we will discuss in detail in the following subsection.

\subsection{Other random matroid models}
There are two other random matroid models that have been studied in the literature. 
Mayhew, Newman, Welsh and Whittle~\cite{mayhew2011asymptotic} introduced the uniform model for random matroid on $n$ elements, and they proposed various conjectures concerning the connectivity, symmetry, minors, rank and representability. In their work~\cite{mayhew2011asymptotic}, they proved that the probability of a uniformly random matroid being connected is at least 1/2.
In a related study~\cite{lowrance2013properties},  Lowrance, Oxley, Semple and Welsh proved that almost all matroids are simple, cosimple, and 3-connected. They obtained bounds on the rank,  and estimated the number of bases, the number of circuits with given length, and the maximum circuit size.
The study of the uniform model of random matroids eventually boils down to the enumeration of matroids on $n$ elements. 
Knuth~\cite{knuth1974asymptotic} established a lower bound for the number of matroids with $n$ element, while the current best upper bound is given by Bansal, Pendavingh, and van der Pol~\cite{bansal2015number}.
By counting representably matroids, Nelson~\cite{nelson2016almost} proved that almost all matroids are non-representable.
Pendavingh and Van der Pol~\cite{pendavingh2018number} proved that if a uniform random matroid  $M$ has rank $r$, then almost all $r$-subsets of $E(M)$ are beses of $M$. They derived several results on girth, connectivity, and other related properties from this observation.

On the other hand,
Kelly and Oxley~\cite{kelly1982asymptotic,kelly1982threshold} introduced the concept of random representable matroid over finite fields and specifically the model $\PG(n-1,q,p)$. In this model, each vector in the projective space $\PG(n-1,q)$ is independently included into the ground set of the random matroid with a probability $p$, analogous to the binomial random graph model $\G(n,p)$. They focused their studies on various aspects of these random matroids, including the rank of the largest projective geometry occurring as a submatroid, as well as other submatroids and independent sets. Subsequent work by Kordecki~\cite{kordecki1988strictly,kordecki1996small}, Kordecki and {\L}uczak~\cite{kordecki1991random,kordecki1999connectivity} delved into further exploration of the connectivity, circuits and small submatroids. In~\cite{kelly1984random} Kelly and Oxley introduced a slightly different model $M[U_{n,m}]$,  where $U_{n,m}$ is a uniformly random $n\times m$ matrix over ${\mathbb F}_q$. This model is asymptotically equivalent to $\PG(n-1,q,p)$ unless $m$ is close to $q^n$ (or equivalently when $p$ is close to 1 in $\PG(n-1,q,p)$.  In this slightly different model,  Kelly and Oxley studied the rank, connectivity and circuit sizes of $M[U_{n,m}]$. More recently,  
Altschuler and Yang~\cite{altschuler2017inclusion} determined the threshold for the appearance of fixed minors  in $M[U_{n,m}]$.

\section{Preliminaries}

We refer the readers to Oxley~\cite{oxley2006matroid} for basic definitions of matroids. In particular, if $M = (E, \I)$ is a matroid and $X \subseteq E$, then the contraction and deletion of $X$ in $M$ are defined respectively by $M \dl X = (E - X, \{I \in \I : I \cap X = \varnothing\})$ and 
\begin{align*}
M \cn X &= (E - X, \I^*), \quad\mbox{where}\\
\I^* &= \{I \in \I : I \cap X = \varnothing, I \cup J \in \I\ \mbox{for some maximal independent subset $J$ of $X$}\}.
\end{align*}
The independent sets of $M / X$ turn out not to depend on the choice of $J$. A \emph{submatroid} of $M$ (also called a ``restriction'') is any matroid of the form $M \dl X$, and a \emph{minor} of $M$ is a matroid of the form $M \cn X \dl Y$. Contraction and deletion commute in the sense that $M \cn X_1 \dl Y_1 \cn X_1 \dl Y_2 = M \cn (X_1 \cup X_2) \dl (Y_1 \cup Y_2)$, and so any sequence of deletions and contractions gives a minor. 
 
Let ${\mathbb F}$ be a field.
A matroid $M=(E,\I)$ is \emph{${\mathbb F}$-representable} if there is a matrix $A=[a_p]_{p\in E}$ over ${\mathbb F}$ whose columns are indexed by elements in $M$ such that $S\subseteq E$ is in $\I$ if and only if $\{a_p: p\in S\}$ are linearly independent. We call such a matrix $A$ an \emph{$\mathbb{F}$-representation} of $M$.  A matroid $M$ is called \emph{simple} if for every $p_1,p_2\in E$, $\{p_1,p_2\}\in \I$. In other words, if $A=[a_p]_{p\in E}$ is an $\bF$-representation of a simple matroid $M$ over ${\mathbb F}$, then the columns of $A$ are nonzero and pairwise non-parallel. 

Given a matrix $A$ over ${\mathbb F}$, let $M[A]$ denote the matroid represented by $A$. We say two matrices $A$ and $B$ are row equivalent, denoted by $A\sim B$, if $B$ is obtained from $A$ by performing a sequence of elementary row operations. Note that an elementary row operation does not change column dependencies, and thus $M[A]=M[B]$ if $A\sim B$.

We wish to consider deletions and contractions of representable matroids. Given a matrix $A$ over ${\mathbb F}$ with column set $E$ and a subset $X \subseteq E$, let $A_X$ denote the submatrix of $A$ consisting of the columns in $X$, and let $A_{\setminus X} = A_{E - X}$. Let $A_{/X}$ be  any matrix obtained by first finding $B\sim A$ such that
\[
B_X=\left[\begin{array}{cc}I_{a\times a} & * \\ 0 & 0 \end{array}\right], \quad \text{ where $a=\rk{A_X}$}
\]
and then deleting from $B$ the $a$ rows where the identity matrix $I_{a\times a}$ lies, together with all the columns in $X$. The following standard result shows that these matrices gives representations of deletions and contractions of $M$. 
\begin{prop}
    If $A$ is a matrix over $\mathbb{F}$ with column set $E$, and $X \subseteq E$, then $M[A] \dl X = M[A_{\setminus X}]$, and $M[A] / X = M[A_{/X}]$, regardless of the choice of $A_{/X}$.
\end{prop}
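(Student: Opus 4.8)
The plan is to treat deletion and contraction separately: the deletion identity is essentially a restatement of definitions, while the contraction identity reduces, after putting $A$ into a convenient row-equivalent form, to a one-line block-matrix computation.

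For the deletion part I would simply unfold the two definitions. A set $S \subseteq E - X$ is independent in $M[A_{\setminus X}]$ exactly when the columns of $A$ indexed by $S$ are linearly independent; since $S \cap X = \varnothing$ holds automatically, this is precisely the condition for $S$ to lie in the independent sets of $M[A] \dl X$. Hence $M[A] \dl X = M[A_{\setminus X}]$ with nothing further to check.

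For the contraction part, the first step is a reduction. Because elementary row operations preserve all linear dependences among columns, replacing $A$ by a row-equivalent matrix $B$ with $B_X = \left[\begin{smallmatrix} I_{a\times a} & * \\ 0 & 0 \end{smallmatrix}\right]$ (where $a = \rk{A_X}$) changes neither $M[A]$ nor $M[A] \cn X$, and every matrix meeting the definition of $A_{/X}$ equals $B_{/X}$ for some such $B$. So it suffices to prove $M[B] \cn X = M[B_{/X}]$ for one such $B$. After harmless permutations of rows and columns I would write $B$ in block form with the $a\times a$ identity in the top-left corner, the columns indexed by $X$ occupying the leftmost $|X|$ positions, and the remaining columns forming $B_{\setminus X} = \left[\begin{smallmatrix} D_1 \\ D_2 \end{smallmatrix}\right]$, so that $B_{/X} = D_2$.

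The core of the argument is then: the $a$ identity columns of $B_X$ are linearly independent and $\rk{B_X} = a$, so the subset $J \subseteq X$ indexing them is a maximal independent subset of $X$ in $M[B]$; invoking the quoted definition of contraction together with the stated fact that the independent sets of $M[B] \cn X$ do not depend on the choice of $J$, a set $S \subseteq E - X$ is independent in $M[B] \cn X$ if and only if $S \cup J$ is independent in $M[B]$. Reading off the bottom block of a hypothetical linear dependence among the columns indexed by $S \cup J$ shows immediately that $S \cup J$ is independent in $M[B]$ if and only if the columns of $D_2$ indexed by $S$ are linearly independent, since the coefficients on the $J$-columns are then forced by the top block. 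Thus $S$ is independent in $M[B] \cn X$ if and only if $S$ is independent in $M[D_2] = M[B_{/X}]$, giving $M[B] \cn X = M[B_{/X}]$; the ``regardless of choice'' clause is then automatic since the left-hand side $M[A] \cn X$ makes no reference to the choices involved. I do not anticipate a genuine obstacle here — the only point needing mild care is the bookkeeping in the reduction step (checking that every admissible $A_{/X}$ is realized as some $B_{/X}$, and that the row and column permutations are matroidally harmless), which is routine.
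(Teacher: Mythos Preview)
Your proof is correct. Note, however, that the paper does not actually prove this proposition: it is stated as a ``standard result'' and left without proof, with an implicit reference to Oxley's textbook for background. So there is no paper proof to compare against; your argument supplies exactly the routine verification the authors chose to omit, and the block-matrix bookkeeping you outline is the standard way to carry it out.
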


%We start by defining submatroids and minors of a representable matroid.

% \begin{definition}[Submatroid and minor of representable matroid]
% Given a matrix $A$ over ${\mathbb F}$ and a set of columns $X$ of $A$, let $A_X$ denote the submatrix of $A$ consists of columns in $X$, and let $A_{\setminus X}$ denote the submatrix of $A$ by deleting all columns in $X$. Let $A_{/X}$ be the matrix obtained by first finding $B\sim A$ such that 
% \[
% B_X=\left[\begin{array}{cc}I_{a\times a} & * \\ 0 & 0 \end{array}\right], \quad \text{ where $a=\rk{A_X}$,}
% \]
%  and then deleting the $a$ rows where the identity matrix $I_{a\times a}$ lies, together with all the columns in $X$. 

% Given a matroid $M$ represented by matrix $A$ over ${\mathbb F}$ and $X\subseteq E(M)$, the matroid obtained by deleting $X$, denoted by $M\setminus X$, is defined by $M[A_{\setminus X}]$. The matroid obtained by contracting $X$, denoted by $M/X$, is defined by $M[A_{/X}]$. A matroid obtained by deleting elements in $M$ is called a submatroid of $M$. A matroid obtained by contracting and deleting elements in $M$ is called a minor of $M$.
% \end{definition}
% We remark that in this definition, the minor $M / X$ ostensibly depends on the particular choice of both matrices $A$ and $B$. However, it turns out that $M / X$ can be defined without considering representations, and so the operation of contraction is well-defined matroidally. 

Let $q$ be a prime power and let ${\mathbb F}_q$ denote the finite field of order $q$. A special class of representable matroids is the projective geometry $\PG(t-1,q)$, which is the matroid represented by the matrix $A$ whose column vectors correspond to the set of 1-dimensional subspaces of ${\mathbb F}_q^t$ (i.e.\ $A$ contains exactly one nonzero vector in each 1-dimensional subspace). The concept of $\PG(t-1,q)$ bears a resemblance to the notion of a simple complete graph, and as a result, they play a central role in the examination of submatroids and minors of matroids, in contrast to the study of subgraphs and minors of graphs. By the following simple observation, in order to find any specific ${\mathbb F}_q$-representable rank-$t$ matroid as a minor, it suffices to find $\PG(t-1,q)$ as a minor.

\begin{obs} Let $q$ be a prime power. Then,
$\PG(t-1,q)$ contains every simple rank-t ${\mathbb F}_q$-representable matroid as a submatroid.
\end{obs}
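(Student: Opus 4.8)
The statement to prove is: $\PG(t-1,q)$ contains every simple rank-$t$ $\mathbb{F}_q$-representable matroid as a submatroid. The plan is to argue directly from the representation. Let $N$ be a simple rank-$t$ $\mathbb{F}_q$-representable matroid, and fix an $\mathbb{F}_q$-representation $A = [a_p]_{p \in E}$ of $N$, where $A$ is a $t' \times |E|$ matrix over $\mathbb{F}_q$. Since $N$ has rank $t$, after deleting redundant rows (an operation that does not change the matroid $M[A]$, as the remaining rows still span the column space) we may take $t' = t$, so $A$ is $t \times |E|$ and its columns live in $\mathbb{F}_q^t$.

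First I would observe that simplicity of $N$ translates into a concrete condition on the columns of $A$: for any $p_1, p_2 \in E$ with $p_1 \neq p_2$, the set $\{a_{p_1}, a_{p_2}\}$ is independent, which means (i) each $a_p$ is nonzero, and (ii) no two columns are scalar multiples of one another, i.e.\ the columns $\{a_p : p \in E\}$ represent pairwise distinct $1$-dimensional subspaces of $\mathbb{F}_q^t$. Next, recall that by definition $\PG(t-1,q)$ is represented by a matrix $B$ whose columns consist of exactly one nonzero representative vector from each $1$-dimensional subspace of $\mathbb{F}_q^t$. So for each $p \in E$, the subspace $\langle a_p \rangle$ is one of the subspaces appearing among the columns of $B$; let $b_p$ be the corresponding column of $B$, so that $a_p = \lambda_p b_p$ for some $\lambda_p \in \mathbb{F}_q^*$. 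By (ii) the map $p \mapsto \langle a_p \rangle$ is injective, so the columns $\{b_p : p \in E\}$ are distinct columns of $B$; let $X \subseteq E(\PG(t-1,q))$ be the complement of this column set.

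The key step is then to check that the submatrix $B_{E(\PG(t-1,q)) - X} = [b_p]_{p \in E}$ represents $N$, i.e.\ $M[B_{\setminus X}] \cong N$ via the bijection $p \leftrightarrow p$. This is immediate because scaling a column of a matrix by a nonzero field element does not change which subsets of columns are linearly dependent: a subset $S \subseteq E$ satisfies $\sum_{p \in S} c_p a_p = 0$ for some not-all-zero scalars $c_p$ if and only if $\sum_{p \in S} (c_p \lambda_p) b_p = 0$, with $(c_p \lambda_p)$ not all zero. Hence $[a_p]_{p\in E}$ and $[b_p]_{p \in E}$ have exactly the same independent sets, so $M[B_{\setminus X}] = M[[b_p]_{p\in E}] = M[A] = N$. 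By the Proposition quoted above, $M[B_{\setminus X}] = \PG(t-1,q) \dl X$, so $N$ is a submatroid of $\PG(t-1,q)$, as desired.

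There is essentially no obstacle here; the only points requiring a word of care are the initial reduction to a $t \times |E|$ representation (justified because deleting rows outside a row basis preserves column dependencies, equivalently $A \sim B$ up to zero rows which may be dropped) and the observation that a representation of a rank-$t$ matroid can always be taken to have exactly $t$ rows of full row rank — both standard facts from matroid representation theory (see Oxley~\cite{oxley2006matroid}). The heart of the argument is simply that the columns of $A$, being nonzero and pairwise non-parallel, pick out a subset of the $1$-subspaces of $\mathbb{F}_q^t$, and the $\PG(t-1,q)$ matrix by construction contains one representative of every such subspace, so the right restriction of $\PG(t-1,q)$ recovers $N$ up to rescaling columns, which does not affect the matroid.
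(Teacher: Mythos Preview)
Your proposal is correct and is exactly the standard argument; the paper in fact states this result as an \emph{Observation} without any proof, so there is no paper proof to compare against beyond the implicit reasoning you have spelled out. Your expansion---reduce to a $t$-row representation, use simplicity to get nonzero pairwise non-parallel columns, match each column to the unique $\PG(t-1,q)$ column in the same $1$-dimensional subspace, and note that column rescaling preserves the matroid---is the intended one-line justification made explicit.
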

%\proof There are $(q^t-1)/(q-1)$ 1-dimensional subspaces in ${\mathbb F}_q^t$. Taking one nonzero vector from each of these subspeces, we obtain a set of $u_1,\ldots, u_{\ell}\in {\mathbb F}_q^t$, where $\ell=(q^t-1)/(q-1)$. Then the matrix $[u_1\ u_2\ \ldots u_{\ell}]$ is a representation of $\PG(t-1,q)$. Finally, every  simple rank-t ${\mathbb F}_q$-representable matroid $N$ can be represented by a $t\times |E(N)|$ matrix $A=[a_p]_{p\in E(N)}$ over ${\mathbb F}_q$, where $E(N)$ denotes the set of points in $N$. Since $N$ is simple, no two column vectors in $A$ are linearly dependent. Thus, the set of column vectors in $A$ must be a subset of $\{u_1,\ldots, u_{\ell}\}$. Hence, $N$ is a submatroid of $\PG(t-1,q)$. \qed

%Knuth; Mayhew distributions over all matroids with fixed ground set size.

In this paper, all asymptotics refer to $n\to\infty$. For two sequences of real numbers $(a_n)$ and $(b_n)$, we write $a_n = O(b_n)$ if there is $C > 0$ such that $|a_n| < C|b_n|$ for every $n$. We say $a_n =\Omega(b_n)$ if $a_n >0$ and $b_n =O(a_n)$. We say $b_n =o(a_n)$ if $a_n >0$ and $lim_{n\to\infty} b_n/a_n = 0$. We say $b_n=\omega(a_n)$ if $b_n>0$ and $a_n=o(b_n)$. We say a sequence of events $A_n$ holds a.a.s. if $\pr(A_n) = 1 - o(1)$. 

\section{Proof of Theorem~\ref{thm:main}(a) and Theorem~\ref{thm:main2}(a)}
\label{sec:sub}

By~\cite[Theorem 1.1]{ayre2020satisfiability} (note that our matrix $A$ in this paper corresponds to the transpose of the matrix in~\cite{ayre2020satisfiability} ), a.a.s.\ all columns of $A$ are linearly independent under the hypotheses of Theorem~\ref{thm:main}(a) or Theorem~\ref{thm:main2}(a). Thus $M[A]$ is a free matroid, which cannot contain a minor that has a circuit. \qed

\section{Proof of Theorem~\ref{thm:main}(b) and Theorem~\ref{thm:main2}(b)}
\label{sec:sup}

Most proofs in this section are independent of ${\mathbb F}$ and $\P$. 
Without specification, ${\mathbb F}$ is an arbitrary field, and $\P$ is an arbitrary permutation-invariant distribution on $(\mathbb F^*)^k$. Let $t$ be a fixed integer and let $N$ be a simple ${\mathbb F}$-representable rank-$t$ matrix. We aim to find $N$ as a minor of $M[A_m]$ where $m=(d_k/k+\eps)n$ for some $\eps=o(1)$. Recall that $\bfv\in {\mathbb F}^n$ is defined as a random vector by first choosing  a uniformly random subset $K\subseteq [n]$ with $|K|=k$ and then setting the value of $(v_j)_{j\in K}$ by the law $\P$, whereas $v_j$ is set to 0 for all $j\notin K$.

In this section, we use the notation $A^1, A^2, \ldots$ to represent matrices indexed by superscripts.  It is important to note that the proofs in this paper do not involve powers of any matrix. This clarification is provided to ensure that readers do not confuse the notation with matrix powers.

We construct $A_m$ by starting with $A^1\sim A_{m_1}$ where $m_1$ is slightly smaller than $d_k n$. Then, we carry out multiple rounds of sprinkling random vectors to build the matrix $A_m$. Throughout the process we leverage the existing structures present in $A^1$ as well as those emerging from the successive rounds of sprinkled vectors. These structures aid us in identifying the columns of $A_m$ to be  contracted, and ultimately enable us to find the desired minor.

%In this section, we use $q$ to denote a prime power and use $p$ to denote a prime number. 

{\bf Step 1} ({\em Start with a subcritical matrix}): Let $A^1\sim A_{m_1}$ where $m_1=(d_k/k-\eps_1)n$ where $\eps_1>0$ and $\eps_1=o(1)$. The following lemma follows by~\cite[Theorem 1.1]{ayre2020satisfiability}.

\begin{lemma}\lab{lem:initial} Let $\bF$ be a finite field.
Then, a.a.s.\ all columns of $A_1$ are linearly independent, provided that $\eps_1$ approaches to 0 sufficiently slowly.     
\end{lemma}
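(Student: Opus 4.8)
The plan is to derive the lemma from the full-column-rank result of~\cite[Theorem 1.1]{ayre2020satisfiability} by a short diagonalisation; essentially no new content is needed beyond that citation. First I would record two routine reductions. Since $A^1\sim A_{m_1}$ and elementary row operations do not change column dependencies, the lemma is equivalent to the statement that a.a.s.\ the columns $\bfv_1,\dots,\bfv_{m_1}$ of $A_{m_1}$ are linearly independent. Moreover, this event is monotone decreasing in $m_1$: in the natural coupling where $A_m=[A_{m-1},\bfv_m]$ for all $m$, linear independence of the columns of $A_m$ forces linear independence of the columns of $A_{m-1}$, so it suffices to control $A_m$ for some $m\ge m_1$ of our choosing.

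Next I would invoke~\cite[Theorem 1.1]{ayre2020satisfiability} (applied to the transpose of $A$, and using that $\bF$ is finite): for each fixed $\delta>0$, a.a.s.\ the columns of the matrix produced by this process with $\lfloor(d_k/k-\delta)n\rfloor$ columns are linearly independent; equivalently, the failure probability is bounded by some $g_\delta(n)$ with $g_\delta(n)\to0$ as $n\to\infty$. The lemma then follows by letting $\delta$ shrink slowly with $n$. Concretely: for each integer $j\ge1$, choose $n_j$ so that $g_{1/j}(n)<1/j$ for all $n\ge n_j$, arrange $n_1<n_2<\cdots$, and set $\eps_1(n)=1/j$ for $n_j\le n<n_{j+1}$ (and $\eps_1(n)=1$ for $n<n_1$). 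Then $\eps_1(n)\to0$, the relevant number of columns is $\lfloor(d_k/k-\eps_1(n))n\rfloor$, and the failure probability is at most $1/j(n)=o(1)$, where $j(n)$ is the block index with $n_{j(n)}\le n<n_{j(n)+1}$. Any $O(1)$ discrepancy between $m_1$ and $\lfloor(d_k/k-\eps_1)n\rfloor$ arising from the precise convention for $m_1$ is absorbed by the monotonicity step.

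I do not anticipate a genuine obstacle: the combinatorial work is entirely contained in~\cite{ayre2020satisfiability}. The one point requiring care, and the reason the hypothesis insists that $\eps_1\to0$ ``sufficiently slowly'', is that an $\eps_1$ tending to $0$ at an arbitrary rate would put $m_1\sim(d_k/k)n$ exactly at the critical density $d_k$, where full column rank can (and in general does) fail; the diagonalisation is precisely what keeps the process at an $o(1)$ but safely positive distance below criticality, where the cited theorem applies.
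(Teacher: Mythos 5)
Your proposal is correct and takes the same route as the paper: the paper simply cites~\cite[Theorem 1.1]{ayre2020satisfiability} (applied to the transpose) and leaves the rest implicit. Your added monotonicity coupling and diagonalisation over $\delta=1/j$ are exactly the routine details needed to justify the ``$\eps_1\to 0$ sufficiently slowly'' clause, and they are carried out correctly.
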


\begin{definition}\lab{def:2core}
Given a matrix $A$, the 2-core of $A$, denoted by $\core{A}$, is defined by the matrix obtained by repeatedly deleting the all-zero rows, and deleting the rows that contain exactly one nonzero entry, as well as the column where that nonzero entry lies in.
\end{definition}
%A quick observation is that $M[\core{(A_1)]$ is a minor of $M[A_1]$.

Let $\core{(A^1)}$ be the 2-core of $A^1$ and let $n_1$ denote the number of columns in $\core{(A^1)}$. By definition, every column of $\core{(A^1)}$ has exactly $k$ nonzero entries, and every row of $\core{(A^1)}$ has at least two nonzero entries. Next, we prove that $\core{(A^1)}$ is close to a square matrix.
\begin{lemma}\lab{lem:rows}
A.a.s.\ $n_1=\Theta(n)$ and the number of rows in $\core{(A^1)}$ is $(1+\Theta(\eps_1))n_1$. 
\end{lemma}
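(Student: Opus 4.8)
\textbf{Proof proposal for Lemma~\ref{lem:rows}.}

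The plan is to analyze the $2$-core via the standard parallel between the support structure of $A^1$ and a random $k$-uniform hypergraph, then invoke the well-developed theory of cores of sparse random hypergraphs. Let $H$ be the $k$-uniform hypergraph on vertex set $[n]$ whose hyperedges are the supports $K_1,\ldots,K_{m_1}$ of the columns $\bfv_1,\ldots,\bfv_{m_1}$; since the supports are chosen as independent uniform $k$-subsets of $[n]$, $H$ is distributed as $\G_k(n,m_1)$. The rows of $A^1$ correspond to vertices of $H$, the columns to hyperedges, and a row has a nonzero entry in a given column exactly when the corresponding vertex lies in the corresponding hyperedge. Deleting an all-zero row corresponds to deleting an isolated vertex; deleting a row with a single nonzero entry together with its column corresponds to deleting a degree-$1$ vertex together with its incident hyperedge. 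Hence the peeling process in Definition~\ref{def:2core} is exactly the $2$-core peeling process on $H$ (followed by discarding any isolated vertices that remain), so the columns of $\core{(A^1)}$ are the hyperedges of the $2$-core of $H$ and the rows of $\core{(A^1)}$ are the non-isolated vertices of that $2$-core.

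Now I would apply the classical results on the $2$-core of $\G_k(n,m)$ due to Pittel, Spencer and Wormald and Molloy, specialised to the relevant density. Writing $d_1 = k m_1 / n = d_k - k\eps_1$, since $d_1 < d_k$ but $d_1 \to d_k$, the emergence threshold for a nonempty $2$-core in $\G_k(n,m)$ is a constant $c_k^{(2)} < d_k$ (indeed $c_k^{(2)}$ is the smallest $d$ for which $\rho_{k,d} > 0$, strictly below $d_k$ by Remark~\ref{remark:k}), so for $\eps_1$ small enough $d_1$ is safely above the core threshold and the $2$-core is a.a.s.\ nonempty. The number of vertices in the $2$-core is a.a.s.\ $\sim n\,(1 - e^{-d_1\rho} - d_1\rho e^{-d_1\rho})$ and the number of hyperedges is a.a.s.\ $\sim \tfrac{n}{k}\,d_1\rho^{k}$ (equivalently $\sim m_1 \rho^k$), where $\rho = \rho_{k,d_1}$ is the relevant fixed point; more precisely one uses $n_1 \sim m_1\rho^k = \Theta(n)$ and then expresses the vertex count in terms of $n_1$. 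Both quantities are $\Theta(n)$ since $\rho = \rho_{k,d_1}$ is bounded away from $0$, giving $n_1 = \Theta(n)$ immediately.

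For the ratio of rows to columns I would use the defining equation of $d_k$ in~\eqref{eq:dk}. Let $f(d) = \rho_{k,d} - d\rho_{k,d}^{k-1} + (1-1/k)d\rho_{k,d}^{k}$; this is, up to the factor $n$, the excess of vertices over hyperedges in the $2$-core at density $d$, and by~\eqref{eq:dk} we have $f(d_k) = 0$ and $f(d) > 0$ for $d$ slightly below $d_k$. Writing $r_1$ for the number of rows of $\core{(A^1)}$, the two asymptotic formulas above give $r_1 - n_1 \sim n f(d_1)$ a.a.s. Since $d_1 = d_k - k\eps_1$, a Taylor expansion of $f$ at $d_k$ (using that $\rho_{k,d}$ is a smooth, strictly increasing function of $d$ near $d_k$, so $f$ is differentiable with $f'(d_k) \ne 0$; the sign is correct because $f > 0$ just left of $d_k$) yields $f(d_1) = \Theta(\eps_1)$, and hence $r_1 - n_1 = \Theta(\eps_1 n) = \Theta(\eps_1 n_1)$, i.e.\ $r_1 = (1 + \Theta(\eps_1))n_1$, as claimed.

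The main obstacle is purely bookkeeping: making sure the $o(1)$ error terms coming from the hypergraph core concentration results are genuinely of smaller order than $\eps_1 n$, which forces the constraint that $\eps_1 \to 0$ sufficiently slowly (matching the hypothesis already used in Lemma~\ref{lem:initial}); and checking the nondegeneracy $f'(d_k) \ne 0$, which follows from differentiating the fixed-point relation $\rho = 1 - e^{-d\rho^{k-1}}$ to see that $\rho_{k,d}$ has nonzero derivative at $d = d_k$ (where $\rho_{k,d_k} > 0$), so that the linear term in the Taylor expansion does not vanish. Everything else is a direct quotation of known results on $\G_k(n,m)$.
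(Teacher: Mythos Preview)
Your approach is essentially the same as the paper's: both identify $\core{(A^1)}$ with the $2$-core of $\G_k(n,m_1)$, quote the standard asymptotics for its vertex and edge counts, and then use the definition~\eqref{eq:dk} of $d_k$ (which says precisely that rows and columns balance at $d=d_k$) to extract the $\Theta(\eps_1)$ discrepancy; the paper cites \cite[Theorem~1.6]{ayre2020satisfiability} for the asymptotics and the strict monotonicity of the column-to-row ratio $\pi_k(d)$, whereas you reach the same conclusion via a Taylor expansion of $f(d)$. Two small points: your vertex-count formula should read $n\bigl(1-e^{-d_1\rho^{k-1}}-d_1\rho^{k-1}e^{-d_1\rho^{k-1}}\bigr)$ (the Poisson parameter is $d_1\rho^{k-1}$, not $d_1\rho$), and your nondegeneracy argument needs one more line---$\rho'(d_k)\neq 0$ by itself does not yield $f'(d_k)\neq 0$; you must actually compute $f'(d_k)$ (or, as the paper does, borrow the strict monotonicity of $\pi_k$ from the literature).
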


By possibly rearranging the columns and rows of $A^1$, we may write 
\[
A^1=\left[\begin{array}{cc}\core{(A^1)}\ & *\\ 0 & * \end{array}\right],
\]
where $*$ denote entries in $A$ belonging to columns and rows that do not belong to $\core{(A^1)}$. Let $\Row(\core{(A^1)})$ denote  the set of rows in $\core{(A^1)}$, which is the set of rows of $A^1$ that are contained in $\core{(A^1)}$.

{\bf Step 2} ({\em First round of sprinkling random vectors}):  By Lemma~\ref{lem:rows} we may assume that all columns of $\core{(A^1)}$ are linearly independent. Let $\eta=o(1)$ be chosen so that $\eta=o(\eps_1)$. Let $A^2$ be the matrix obtained by adding $\eta n$ independent random column vectors to $A^1$, each distributed as $\bfv$. Given a subset $X$ of columns of a matrix $A$, let $\spn{A_X}$ and $\spn{A}$ denote the subspaces spanned by the column vectors in $A_X$ and in $A$ respectively.  Let $U$ be a subset of columns of $A^2$ constructed as follows. Given a vector $u\in {\mathbb F}^n$, let $\supp{u}$ denote the set of entries of $u$ whose value is nonzero.

\begin{enumerate}[(a)]
\item Let $U_0$ be the set of columns in $\core{(A^1)}$; denote these column vectors by $v_1,\ldots, v_{n_1}$.
\item Let $u_1,\ldots, u_{\eta n}$ be the $\eta n$ column vectors in $A^2\setminus A^1$ whose columns are indexed by $p_1,\ldots, p_{\eta n}$. For $i=1,\ldots, \eta n$, if $\supp{u_i}\subseteq \Row(\core{(A^1)})$ and $u_i\notin \spn{(A^2)_{U_{i-1}}}$, then $U_{i}=U_{i-1}\cup \{p_i\}$; otherwise $U_i=U_{i-1}$. 
\item Let $U=U_{\eta n}$.
 \end{enumerate}

Let $A^3$ be the submatrix of $A^2$ obtained by deleting all columns that are not in $U$, i.e.\ $A^3=(A^2)_{U}$. Note that by the construction of $U$, $A^3$ contains all column vectors in $\core{(A^1)}$. 

\begin{lemma}\lab{lem:U} 
Let $v\sim\bfv$ be independent of $A^3$. Then, 
\[
\pr(v\in \spn{A^3}\mid v\in \Row(\core{(A^1)}))= 1-O(\eps_1/\eta).
\]
\end{lemma}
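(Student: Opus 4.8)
The goal is to show that a fresh random vector $v$, conditioned on having support inside $\Row(\core{(A^1)})$, lies in $\spn{A^3}$ with probability $1-O(\eps_1/\eta)$. The key point is that $A^3$ was built by a greedy ``linear-algebra closure'' process over the $\eta n$ sprinkled vectors: we added $p_i$ to $U$ exactly when $u_i$ had support inside $\Row(\core{(A^1)})$ and was not yet in the span of what we had collected. The plan is to argue that this process must terminate with $\spn{A^3}$ containing almost all of the relevant subspace, because otherwise a constant fraction of the $\eta n$ sprinkled vectors would each contribute a new dimension, which is impossible once the dimension saturates.

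First I would set up the conditional probability space. Let $r = |\Row(\core{(A^1)})|$, which by Lemma~\ref{lem:rows} is $(1+\Theta(\eps_1))n_1 = \Theta(n)$; let $W \subseteq \bF^n$ be the coordinate subspace of vectors supported on $\Row(\core{(A^1)})$, so $\dim W = r$. Let $V = \spn{A^3} \cap W$; note $\spn{\core{(A^1)}} \subseteq V$ since all columns of $\core{(A^1)}$ have support in $\Row(\core{(A^1)})$, and since those $n_1$ columns are linearly independent (Lemma~\ref{lem:rows}), $\dim V \ge n_1$. Crucially $\dim V \le r = n_1 + \Theta(\eps_1 n_1)$, so the ``gap'' $r - \dim V$ is at most $\Theta(\eps_1 n)$ throughout. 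Now condition on everything that determines $A^1$ and $\core{(A^1)}$ (in particular on the good events of Lemmas~\ref{lem:rows} and~\ref{lem:initial}), and let $v \sim \bfv$ be independent. The quantity we want is $\pr(v \in \spn{A^3} \mid v \in W)$; since $v \in W$ forces $v \in W$, this equals $\pr(v \in V \mid v \in W)$, which only depends on the (random) subspace $V$ and the conditional law of $v$ given $v\in W$.

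The heart of the argument is a concentration/termination bound on $\dim V$ at the end of the sprinkling. Let $Z$ be the number of indices $i \in [\eta n]$ for which $u_i$ was actually added to $U$, i.e.\ $\supp{u_i} \subseteq \Row(\core{(A^1)})$ and $u_i \notin \spn{(A^2)_{U_{i-1}}}$. Each such addition increases $\dim\bigl(\spn{(A^2)_U} \cap W\bigr)$ by exactly one, and this dimension starts at $\ge n_1$ and never exceeds $r = n_1 + \Theta(\eps_1 n_1)$, so deterministically $Z = O(\eps_1 n)$. On the other hand, I claim that conditionally on the filtration up to step $i-1$, if the current ``failure probability'' $\pr(u_i \in \spn{(A^2)_{U_{i-1}}} \mid u_i \in W)$ were large, then with good probability $u_i$ gets added. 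Quantitatively: letting $q_i := \pr(v \in \spn{(A^2)_{U_{i-1}}} \mid v \in W)$ for a fresh $v\sim\bfv$, the probability that step $i$ both has $\supp{u_i}\subseteq \Row(\core{(A^1)})$ and contributes a new dimension is $\pr(u_i\in W)\cdot(1-q_i) = \Theta(1)\cdot(1-q_i)$, since $\pr(\bfv\in W) = \binom{r}{k}/\binom{n}{k} = \Theta(1)$ (as $r=\Theta(n)$). Because the sequence $(q_i)$ is nondecreasing (the spans only grow) and $q_{\eta n}$ is what controls the final answer, I would argue: if $q_{\eta n} \le 1 - c$ for some constant $c>0$, then $q_i \le 1-c$ for all $i$, so each of the $\eta n$ steps independently (given the past) has probability $\ge \Theta(c)$ of incrementing the dimension; hence $\ex[Z] = \Omega(c\,\eta n)$, and by a standard concentration argument (e.g.\ Azuma/Freedman on the submartingale, or just Markov combined with the deterministic upper bound) $Z = \Omega(c\,\eta n)$ a.a.s., contradicting $Z = O(\eps_1 n)$ once $\eta = \omega(\eps_1/c)$ — but actually we have $\eta = o(\eps_1)$ here, so I need to be careful about the direction. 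The correct accounting is the reverse: from $Z \le r - n_1 = O(\eps_1 n)$ deterministically, and from $\ex[Z \mid \text{past}] \ge \sum_i \Theta(1)(1-q_i) \ge \eta n \cdot \Theta(1)(1-q_{\eta n})$ in expectation (using monotonicity of $q_i$), we get $\Theta(\eta n)(1 - \ex[q_{\eta n}]) \le \ex[Z] = O(\eps_1 n)$, hence $1 - \ex[q_{\eta n}] = O(\eps_1/\eta)$, i.e.\ $\ex[q_{\eta n}] = 1 - O(\eps_1/\eta)$. Since $q_{\eta n} = \pr(v \in \spn{(A^2)_U} \mid v\in W)$ and $\spn{(A^2)_U} = \spn{A^3}$, and $v\in W \cap \spn{A^3} \iff v \in \spn{A^3}$ given $v\in W$, this is exactly the claimed bound (after removing the conditioning on the good events of Lemmas~\ref{lem:rows}–\ref{lem:initial}, which cost only $o(1)$, absorbed into the $O(\eps_1/\eta)$ since $\eps_1/\eta \to \infty$... or stated as an a.a.s.\ statement).

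\textbf{Main obstacle.} The delicate point is making the martingale bookkeeping rigorous: the $u_i$ are revealed sequentially and the decision to add $p_i$ depends on $\spn{(A^2)_{U_{i-1}}}$, so I must work with the natural filtration $\F_i = \sigma(A^1, u_1, \ldots, u_i)$ and verify that $X_i := \dim(\spn{(A^2)_{U_i}} \cap W) - \dim(\spn{(A^2)_{U_{i-1}}} \cap W)$ has conditional expectation $\ge \Theta(1)(1 - q_i) \ge \Theta(1)(1 - q_{\eta n})$ given $\F_{i-1}$ (here using that $q_i \le q_{\eta n}$, which holds pathwise), then sum and take expectations; the deterministic cap $\sum_i X_i \le r - n_1$ does the rest. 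A secondary subtlety is that $q_{\eta n}$ is itself random, so the conclusion $\ex[q_{\eta n}] = 1 - O(\eps_1/\eta)$ must be upgraded to the pointwise/high-probability statement in the lemma — this follows because $1 - q_{\eta n} \ge 0$, so $\ex[1-q_{\eta n}] = O(\eps_1/\eta)$ already gives $1 - q_{\eta n} = O(\eps_1/\eta)$ in expectation, which is the form the lemma is implicitly using (the statement is an asymptotic-order claim about a conditional probability that is itself a function of the random matrix $A^3$, and Markov converts the expectation bound into the a.a.s.\ bound if a high-probability version is needed downstream). I would also need the elementary fact that $\pr(\bfv \in W) = \binom{r}{k}/\binom{n}{k}$ is bounded below by a positive constant, which is immediate from $r = \Theta(n)$ and $k$ fixed.
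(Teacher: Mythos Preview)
Your approach is essentially the same as the paper's: both exploit the monotonicity of the spans $\spn{(A^2)_{U_{i-1}}}$ together with the hard cap $|U|-|U_0|\le r-n_1=O(\eps_1 n)$ coming from Lemma~\ref{lem:rows} (columns cannot exceed rows once all columns are independent). The paper packages this as a contradiction argument---assume the final conditional failure probability exceeds $K\eps_1/\eta$, use monotonicity to propagate this lower bound to every step, apply Chernoff to get $|U|-|U_0|\ge (K/2)\eps_1 n$ a.a.s., and contradict the row count---whereas you run the same inequalities directly in expectation. Both are correct and equally short.

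Two small remarks. First, the relation between $\eta$ and $\eps_1$ is $\eps_1=o(\eta)$ (as used in Step~4 and in the proof of Lemma~\ref{lem:minor}); the line ``$\eta=o(\eps_1)$'' in Step~2 is a typo, and your brief confusion about the direction stems from that. In particular $\eps_1/\eta\to 0$, not $\infty$. Second, your worry that $q_{\eta n}$ is not $\F_{i-1}$-measurable is handled exactly as you suggest: use $1-q_i\ge 1-q_{\eta n}$ pathwise, take expectations of both sides, and sum. The resulting bound $\ex[1-q_{\eta n}]=O(\eps_1/\eta)$ is what the paper uses downstream; the paper's contradiction-with-Chernoff version yields the slightly stronger a.a.s.\ form directly, but either suffices for the applications in Steps~4--5.
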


By the construction of $U$, the columns of $A^3$ are all linearly independent, and they contain $U_0$, the set of all columns of $\core{(A^1)}$. We may, by permuting the rows of $A^3$ if necessary, write
\[
A^3=\left[
\begin{array}{c}
A'' \\
* 
\end{array}
\right]
\]
where 
$A''$ is an invertible $|U|\times |U|$ matrix. %Moreover, when arranging the rows of $A''$, we start rows from the top of $A''$ that are rows in $\Row(\core{(A^1)})$, and then take a uniformly random permutation of these rows. 
By Lemma~\ref{lem:rows} we may assume that 
\[
\mbox{$A''$ contains all but at most $O(\eps_1)n$ rows of $\Row(\core{(A^1)})$.}
\]

{\bf Step 3} ({\em The linear operator turning $A''$ to $I$}): Let $B=(A'')^{-1}$. Let $b_1,\ldots,b_{n'}$ be the rows of $B$ where $n'=|U|$. Given $J\subseteq U$, let $B^J$ denote the matrix consisting of rows $b_j$, $j\in J$.
 We prove the following key lemma about $B$. 
\begin{lemma}\lab{lem:key}
Let $r>0$ be a fixed positive integer. There exists a fixed $\delta>0$ such that a.a.s.\ for every $J\subseteq [r]$ and for every nonzero $\alpha\in {\mathbb F}^{J}$, $\sum_{j\in J}\alpha_jb_j$ has at least $\delta n'$ nonzero elements.
\end{lemma}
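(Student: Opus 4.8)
Since $B=(A'')^{-1}$ we have $b_j\cdot(A'')_{\cdot i}=\delta_{ij}$, so a vector $\beta=\sum_{j\in J}\alpha_jb_j$ with $J\subseteq[r]$ is exactly a vector satisfying $\beta A''=\alpha$ where $\alpha$ is supported on $[r]$; moreover $\alpha\ne 0$ iff $\beta\ne 0$ because $A''$ is invertible. Hence Lemma~\ref{lem:key} is equivalent to the statement that a.a.s.\ every nonzero $\beta$ in the left kernel of $(A'')_{\setminus[r]}$ (the matrix $A''$ with the columns indexed by $[r]$ deleted) has at least $\delta n'$ nonzero entries. This left kernel has dimension exactly $r$, and — the point I would lean on — any such $\beta$ has $\beta A''\ne 0$, so the corresponding linear dependence among the rows of $(A'')_{\setminus[r]}$ genuinely ``passes through'' one of the $r$ deleted columns rather than being a dependence of $A''$ itself.

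\textbf{Extracting combinatorial content.} Fix such a $\beta$ and put $S=\supp\beta$. For every column $i\notin[r]$ of $A''$ we have $\beta\cdot(A'')_{\cdot i}=0$, and a column having exactly one nonzero entry in the rows $S$ would force a coordinate of $\beta$ to vanish; hence every column of $A''$ outside $[r]$ meets $S$ in $0$ or at least $2$ rows. All but at most $r$ of the columns of $A''$ are columns of $\core{(A^1)}$ (restricted to the rows of $A''$), and the remaining ones are the sprinkled columns, each carrying an essentially uniform random $k$-subset of $\Row(\core{(A^1)})$ as support; and $S\subseteq\Row(\core{(A^1)})$ since every column of $A^3$ has support there. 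So, writing $H=\core{(A^1)}$ for the 2-core hypergraph: at most $r$ hyperedges of $H$ meet $S$ in exactly one vertex, at most $r$ sprinkled columns meet $S$ in exactly one vertex, and the restriction of $\core{(A^1)}$ to the rows $S$ is row-rank-deficient with a deficiency compatible with the sprinkled columns. It suffices to rule out, for a suitable fixed $\delta>0$, the a.a.s.\ existence of a nonempty such $S$ with $|S|\le\delta n'$.

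\textbf{Splitting by the size of $S$.} For $|S|$ bounded, the condition at the 2-core columns forces the rows of $\core{(A^1)}$ indexed by $S$ to become dependent after deleting at most $r$ columns; since the rows of $A''$ are linearly independent, this dependence must use one of the $r$ deleted columns, i.e.\ $\beta A''$ is a nonzero vector supported on $[r]$. A first-moment estimate in the configuration model for $H$ — legitimate by the standard description of a 2-core as a uniform random configuration with all degrees at least $2$, with the vertex and edge counts supplied by Lemma~\ref{lem:rows} — shows this does not occur: every time an edge is forced to meet $S$ in a second vertex, or $S$ is forced to ``attach'' to one of the $O(1)$ vertices covered by the $r$ distinguished columns, one pays a factor $O(n^{-1})$, which beats the $O(n^{|S|})$ choices for $S$. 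For $|S|$ in the intermediate range $\omega(1)\le|S|\le\delta n'$, one uses instead that the Step 2 sprinkling pins down the left kernel: by Lemma~\ref{lem:U} the span of $A^3$ already absorbs almost every sparsely-supported vector on $\Row(\core{(A^1)})$, so its left kernel, and a fortiori that of $A''$, cannot contain a vector of support between $\omega(1)$ and $\delta n'$; quantitatively, for such an $S$ far more than $r$ of the sprinkled columns meet $S$ in exactly one vertex, contradicting the condition above. Combining the two ranges and taking $\delta$ small enough proves the lemma.

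\textbf{The main obstacle.} The delicate part is the small-$|S|$ regime. A purely combinatorial expansion statement about the 2-core is false there — a single low-degree vertex, or a short hyperedge-cycle, violates it — so the argument must exploit both the genuine randomness of $H$ near the $O(1)$ vertices touched by the distinguished columns and the linear-algebraic fact that $A''$ is invertible (equivalently, that $\core{(A^1)}$ has full column rank, Lemma~\ref{lem:initial}). It is precisely this invertibility that kills the obvious small dependencies and forces any small support to interact nontrivially with the $r$ distinguished columns, after which a first-moment bound finishes the job.
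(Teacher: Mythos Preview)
Your reformulation and the combinatorial content you extract are correct and match the paper's setup: writing $\beta=\sum_{j\in J}\alpha_jb_j$, one has $\beta A''=\alpha$ supported on $[r]$, and the support $S$ of $\beta$ must have the property that every column outside $[r]$ meets $S$ in zero or at least two rows. (The paper phrases this in the dual hypergraph $H$ with vertices $=$ columns and edges $=$ rows, so that $S$ becomes a set of edges and the condition becomes ``all but at most $r$ vertices of the subgraph have degree $\ge 2$'', but the content is identical.)

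However, your proof strategy in the third paragraph has genuine gaps. The intermediate-range argument is wrong on two counts. First, Lemma~\ref{lem:U} concerns the \emph{column} span of $A^3$: it says a fresh random column almost surely lies in $\spn{A^3}$. This says nothing about the \emph{left} kernel of $(A'')_{\setminus[r]}$; indeed $A''$ itself has trivial left kernel since it is invertible, and the $r$-dimensional left kernel of $(A'')_{\setminus[r]}$ is not constrained by Lemma~\ref{lem:U} at all. Second, the sprinkled-columns count does not work: only $n'-n_1=O(\eps_1 n)$ sprinkled columns actually land in $U$, so the expected number meeting a set $S$ of size $s$ in exactly one row is $O(\eps_1 s)$, which need not exceed $r$ unless $s\gg 1/\eps_1$; and in any case $S=\supp\beta$ depends on all of $A''$, including the sprinkled columns, so you cannot treat them as independent of $S$.

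The paper's route is quite different and does not split by $|S|$. It uses the permutation symmetry of the column labels to treat $[r]$ as a uniformly random $r$-subset of $U_0$, and then reduces the lemma to the purely combinatorial estimate $|\mathscr{Y}^{\delta n}_h|=o(n^h)$ for each $1\le h\le r$, where $\mathscr{Y}^{\delta n}_h$ is the set of subgraphs of $H$ with at most $\delta n$ edges and exactly $h$ degree-one vertices. This estimate is the real work: the paper shows (i) the ``red'' subgraph of degree-two rows has all components of size $O(\log n)$, (ii) pseudo-forests with $\le h$ leaves number $O((n\log^{2h}n)^{h/2})$, and (iii) non-forests are handled by bounding the number of $2$-cores of their Tanner graphs and their tree-extensions via a careful first-moment in the configuration model. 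Altogether $|\mathscr{Y}^{\delta n}_h|=O(n^{h/2+o(1)})$, which is $o(n^h)$. Your ``main obstacle'' paragraph correctly identifies that small $|S|$ is the hard case and that naive expansion fails, but the resolution you gesture at (invertibility plus local randomness) is not a proof; the paper's resolution is this $\mathscr{Y}^{\delta n}_h$ counting, which occupies the bulk of Section~\ref{sec:key}.
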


{\bf Step 4} ({\em The second round of sprinkling random vectors}): Let $A^4$ be the matrix obtained by adding $\eps_1 n$ random column vectors $v_1,\ldots, v_{\eps_1 n}$ to $A^2$, each as an independent copy of $\bfv$. Let $(A')^4$ be the corresponding matrix by adding $v_1,\ldots, v_{\eps_1 n}$ to $A^3$ instead. Thus $(A')^4$ is a submatrix of $A^4$. 

 By Lemma~\ref{lem:rows}, $\pr(\supp{v_i}\subseteq \Row(\core{(A^1)}))=\Theta(1)$ for every $1\le i\le \eps_1 n$. By Lemma~\ref{lem:U}, $\pr(v_i \in \spn{(A^5)_U}\mid \supp{v_i}\subseteq \Row(\core{(A^1)}))=1-O(\eps_1/\eta)=1-o(1)$ for every $1\le i\le \eps_1 n$.  It follows by the standard Chernoff bound that a.a.s.\ $|V| =\Theta(\eps_1 n)$,
 where
 \[
 V=\{v\in \{v_1,\ldots, v_{\eps_1 n}\}: \supp{v}\subseteq\Row(\core{(A^1)}), v\in \spn{(A^5)_U}\}.
 \]
 
 Let $v'_1,\ldots,v'_{\eps_2 n}$ be the vectors in $V$. To match the dimension, write $v'_i=\left[\substack{x'_i\\y'_i}\right]$.
 Let $A^5$ be the submatrix of $(A')^4$ such that
\[
A^5=\left[
\begin{array}{ccc}
A'' & | & x'_1,\ldots, x'_{\eps_2 n}\\
A^* & | & y'_1,\ldots, y'_{\eps_2 n}
\end{array}
\right].
\]
That is, $A^5$ is obtained from $(A')^4$ by deleting all vectors in $\{v_1,\ldots,v_{\eps_1n}\}\setminus V$.
Thus, with appropriate row operations, i.e.\ multiplying $A^5$ by 
\[
\left[\begin{array}{cc}
B & 0 \\
-A^*B & I
\end{array}
\right]
\]
 to the left,
$A^5$ is row equivalent to 
\[
\left[
\begin{array}{ccccc}
I & | & B x'_1&\ldots& B x'_{\eps_2 n}\\
0 & | & y'_1-A^*Bx'_1&\ldots& y'_{\eps_2n}-A^*Bx'_{\eps_2n} 
\end{array}
\right].
\]
Note that for each $1\le i\le \eps_2 n$, $v'_i\in \spn{(A^5)_U}$ if and only if $y'_i-A^*Bx'_i=0$. Thus we immediately have the following observation.

\begin{obs}\lab{obs:Uspan}
For every $1\le j\le \eps_2 n$, $y_j'-A^*Bx'_j=0$.
\end{obs}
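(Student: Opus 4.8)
The final statement to prove is Observation~\ref{obs:Uspan}, which asserts that for every $1 \le j \le \eps_2 n$, we have $y_j' - A^* B x_j' = 0$.

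\medskip

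\noindent\textbf{Proof plan for Observation~\ref{obs:Uspan}.}
The plan is to simply unwind the definitions, since the statement is essentially immediate from how the set $V$ was constructed. Recall that each $v_j'$ is, by definition of $V$, a vector satisfying $v_j' \in \spn{(A^5)_U}$ (more precisely, $v_j'$ was selected into $V$ precisely because it lies in the span of the columns indexed by $U$, i.e.\ the columns of $A''$ stacked over $A^*$). Writing $v_j' = \left[\substack{x_j'\\ y_j'}\right]$ to match the block decomposition of $A^5$, the condition $v_j' \in \spn{(A^5)_U}$ means there is a coefficient vector $c \in \bF^{U}$ with $A'' c = x_j'$ and $A^* c = y_j'$. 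Since $A''$ is invertible with inverse $B$, the first equation forces $c = B x_j'$, and substituting into the second gives $y_j' = A^* c = A^* B x_j'$, i.e.\ $y_j' - A^* B x_j' = 0$.

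\medskip

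Equivalently, one can read this directly off the row-reduced form of $A^5$ displayed just above the observation: multiplying $A^5$ on the left by the invertible block matrix $\left[\begin{smallmatrix} B & 0 \\ -A^* B & I \end{smallmatrix}\right]$ produces a matrix whose $j$-th appended column is $\left[\substack{B x_j'\\ y_j' - A^* B x_j'}\right]$. Left-multiplication by an invertible matrix is a sequence of elementary row operations, hence preserves all linear dependencies among columns; in particular it preserves membership of a column in the span of the columns indexed by $U$. After the row reduction, the columns indexed by $U$ form the identity block $\left[\substack{I\\0}\right]$, so a column $\left[\substack{B x_j'\\ y_j' - A^* B x_j'}\right]$ lies in their span if and only if its lower block $y_j' - A^* B x_j'$ vanishes. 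Since by construction every $v_j' \in V$ does lie in $\spn{(A^5)_U}$, the lower block must be zero, which is exactly the claim.

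\medskip

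There is no real obstacle here: the observation is a bookkeeping consequence of the definition of $V$ and the invertibility of $A''$, recorded separately only because the quantities $y_j' - A^* B x_j'$ (and the columns $B x_j'$) will be used in the subsequent steps of the argument. The only thing to be careful about is the indexing bookkeeping — that $\eps_2 n = |V|$ and that the vectors $v_1', \dots, v_{\eps_2 n}'$ enumerate exactly the members of $V$, all of which satisfy the span condition by the definition of $V$ — but this is already set up in Step~4.
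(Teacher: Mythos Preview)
Your proof is correct and follows essentially the same reasoning as the paper: the paper notes just before the observation that $v_i' \in \spn{(A^5)_U}$ if and only if $y_i' - A^* B x_i' = 0$, and the observation then follows immediately from the definition of $V$. Your write-up simply makes this implication explicit via the block decomposition and the invertibility of $A''$.
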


{\bf Step 5} ({\em Contraction}): Recall that $U$ is the set of columns of $A^3$, i.e.\ the set of columns in the submatrix $\left[ \substack{A''\\ A^*} \right]$ of $A^5$. Let $r=r(t)$ be a sufficiently large constant that depends only on $t$, the rank of $N$, and let $[r]$ denote the first $r$ columns of $U$. Let $A^6$ be the matrix obtained from $A^5$ by contracting all columns in $U\setminus [r]$. By Observation~\ref{obs:Uspan},
\begin{equation}
\rk{A^6}=r.\lab{A6rank}
\end{equation}
Without loss of generality, we may keep only the first $r$ rows of $A^6$ as the remaining rows are all-zero rows by Observation~\ref{obs:Uspan}, and deleting these rows does not change the matroid the matrix represents.
 The final step is to find the desired minor in $M'=M[A^6]$.

\begin{lemma}\lab{lem:minor} Let $\bF$ be a finite field. Then,
a.a.s.\ $M'$ contains any fixed simple ${\mathbb F}$-representable matroid $N$ as a minor. 
\end{lemma}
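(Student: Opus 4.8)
The plan is to show that $M' = M[A^6]$ contains $\PG(t-1,q)$ as a minor, where $q = |\bF|$; by the Observation that $\PG(t-1,q)$ contains every simple rank-$t$ $\bF$-representable matroid as a submatroid, this suffices (and deletions pass to submatroids). So it is enough to exhibit, among the columns $Bx'_1,\ldots,Bx'_{\eps_2 n}$ of (the nonzero part of) $A^6$, a set of $\frac{q^r-1}{q-1}$ columns that are pairwise non-parallel and nonzero, with $r$ chosen large enough that $\frac{q^r-1}{q-1}\ge$ the number of points of $\PG(t-1,q)$; in fact we want the full projective geometry, so we want every one of the $q^r$ vectors of $\bF^r$ to appear (up to scaling) as some $Bx'_j$, after possibly deleting extra columns.

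**The key steps.**

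First, recall from Step 3 and Lemma~\ref{lem:key} that $B = (A'')^{-1}$ has the property that for every $J\subseteq[r]$ and every nonzero $\alpha\in\bF^J$, the combination $\sum_{j\in J}\alpha_j b_j$ has at least $\delta n'$ nonzero coordinates, where $b_1,\ldots,b_{n'}$ are the rows of $B$. Equivalently, fixing any target vector $w\in\bF^r$, if we pick a column index $\ell\in[n']$, the $\ell$-th column of $B^{[r]}$ (the top $r$ rows of $B$) equals $w$ with "positive density": more precisely, the fraction of coordinates $\ell$ for which the $\ell$-th column of $B^{[r]}$ is orthogonal to a fixed nonzero functional is bounded away from $1$, and intersecting over the $O(1)$ many functionals cutting out $\{w\}$ shows that a positive fraction $\delta' = \delta'(r,q) > 0$ of the column indices $\ell\in[r]\times[n']$... — let me restate this cleanly. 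Lemma~\ref{lem:key} says the $r\times n'$ matrix $B^{[r]}$ has no "light" nonzero row-combination; by a standard counting argument (inclusion–exclusion over the $q^r$ possible column patterns, using that each nonzero functional kills at most $(1-\delta)n'$ columns) a positive fraction of the $n'$ columns of $B^{[r]}$ realize each fixed pattern $w\in\bF^r$; in particular every pattern, and hence every projective point, is realized $\Omega(n')$ times among the columns of $B^{[r]}$.

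Second, I analyze the distribution of the vectors $Bx'_j$. Each $v'_j = \left[\substack{x'_j\\ y'_j}\right]$ is a (conditioned) copy of $\bfv$: its support is a uniformly random $k$-subset $K$ of $\Row(\core{(A^1)})$, conditioned further to lie in the column span of $(A^5)_U$, with nonzero entries drawn from $\P$. Since $A''$ carries all but $O(\eps_1)n = o(n)$ rows of $\Row(\core{(A^1)})$, with probability $\Theta(1)$ all $k$ support coordinates of $v'_j$ land inside the rows of $A''$, i.e. $y'_j = 0$ and $x'_j$ is a random vector supported on a uniform $k$-subset $K\subseteq[n']$ with $\P$-distributed nonzero entries (and then $Bx'_j = \sum_{\ell\in K} (x'_j)_\ell\, b_\ell$, and by Observation~\ref{obs:Uspan} this lies in $\spn{(A^5)_U}$ automatically — consistent with $v'_j\in V$). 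Now $B x'_j$ restricted to its first $r$ coordinates is $\sum_{\ell\in K}(x'_j)_\ell\, (\text{$\ell$-th column of }B^{[r]})$. Condition on exactly one element $\ell_0$ of $K$ lying in the $\Omega(n')$-sized set of "realizes pattern $w$" indices from Step one (which happens with probability $\Omega(k/n')\cdot n' = \Omega(1)$-per-draw... more carefully: fix $w$; the chance that a single random draw $v'_j$ has $B^{[r]}x'_j$ equal to a prescribed projective point is $\Omega(1)$, because one can condition on the $k$-set $K$ hitting a suitable configuration of realizing-indices and on the $\P$-weights taking suitable values — here using that $\P$ is supported on \emph{all} of $(\bF^*)^k$ (Theorem~\ref{thm:main2}) or that $q$ is prime so single coordinates suffice (Theorem~\ref{thm:main}); this is exactly where the field/distribution hypotheses enter).

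**The main obstacle and conclusion.**

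Granting that each fixed projective point of $\PG(r-1,q)$ is hit by a given sprinkled vector $v'_j$ with probability $\Omega(1)$ (independently of the $O(1)$ many configurations already fixed), and since $|V| = \Theta(\eps_2 n) = \omega(1)$, a routine second-moment / union bound over the $O(1)$ points of $\PG(r-1,q)$ shows that a.a.s. every point of $\PG(r-1,q)$ appears among $\{Bx'_j : v'_j\in V\}$ (indeed $\Omega(\eps_2 n)$ times). Deleting all but one representative of each parallel class, and all columns whose $B^{[r]}$-part is zero, leaves a matrix whose columns are exactly the points of $\PG(r-1,q)$; since $A^6$ has rank exactly $r$ by~\eqref{A6rank}, this is a representation of $\PG(r-1,q)$, so $M'$ has a $\PG(r-1,q)$-minor and hence an $N$-minor once $r\ge t$ and $q^r \ge $ (enough points). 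The step I expect to be genuinely delicate is the second one: showing that a \emph{single} random sparse combination $\sum_{\ell\in K}(x'_j)_\ell b_\ell$ realizes a prescribed value on its first $r$ coordinates with probability bounded below. Lemma~\ref{lem:key} only controls the \emph{weight} of row-combinations of $B$, not the joint distribution of small column-patterns; one must combine it with a counting argument to locate, for each target pattern $w$, many columns of $B^{[r]}$ equal to $w$, and then argue the random support $K$ (size $k\ge 3$) together with the random $\P$-weights can be steered — using permutation-invariance of $\P$ and the support hypothesis on $\P$ (or primality of $q$) — to produce $w$; handling all $q^r$ patterns simultaneously and with uniform positive probability is the crux.
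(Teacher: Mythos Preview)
Your overall outline is on the right track---use Lemma~\ref{lem:key} to control the column-pattern structure of $B^{[r]}$, then use the sprinkled vectors $v'_j$ to populate $A^6$ with enough columns to force a $\PG(t-1,q)$-minor. But there is a genuine gap at the point you yourself flag as ``the crux''.

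\medskip
\textbf{The gap.} You assert that ``a positive fraction of the $n'$ columns of $B^{[r]}$ realize each fixed pattern $w\in\bF^r$'', and that this follows from Lemma~\ref{lem:key} by inclusion--exclusion. It does not. Lemma~\ref{lem:key} says only that each nonzero \emph{linear functional} on $\bF^r$ is nonzero on at least $\delta n'$ columns; equivalently, no proper subspace of $\bF^r$ contains more than $(1-\delta)n'$ of the columns. From this one can deduce (and the paper does, as Lemma~\ref{lem:denseBasis}) that some \emph{basis} $u_1,\dots,u_r$ of $\bF^r$ occurs densely among the columns---but emphatically \emph{not} that every vector occurs. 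For a trivial counterexample take $r=1$, $q=3$: the hypothesis allows all nonzero columns of $B^{[1]}$ to equal $1$, with $2$ never appearing. The inclusion--exclusion you sketch controls hyperplanes, not points.

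This matters downstream. Each sprinkled vector $x'_j$ has support exactly $k$, so $B^{[r]}x'_j$ is a $\bF$-combination of $k$ columns of $B^{[r]}$. If the only columns you can locate densely are the basis vectors $u_1,\dots,u_r$, then the vectors you can force to appear in $A^6$ are exactly those of the form $\sum_{i\in J}\alpha_i u_i$ with $|J|\le k$. Since $r$ must be chosen large in terms of $t$ while $k\ge 3$ is fixed, you \emph{cannot} hit an arbitrary point of $\PG(r-1,q)$ this way, and so you cannot produce $\PG(r-1,q)$ directly as a submatroid of $M'$.

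\medskip
\textbf{What the paper does instead.} The paper accepts this limitation and supplies the missing structural matroid theory. In the prime case it only shows that $A^6$ contains $\binom{r}{k}$ pairwise non-parallel columns (one for each $k$-subset of the dense basis, using a single $\alpha$ with $\P(\alpha)>0$), and then invokes the Geelen--Kung--Whittle growth rate theorem: a simple rank-$r$ $\bF_p$-representable matroid with $\ge C_t r^2$ elements has a $\PG(t-1,p)$-minor, and $\binom{r}{k}\ge C_t r^2$ once $r$ is large. In the prime-power case it shows (using the full-support hypothesis on $\P$) that after a change of basis $A^6$ contains every vector of support at most $k\ge 3$, and then proves a self-contained ``step-up'' lemma (Lemma~\ref{stepup} and Lemma~\ref{lem:qminor}) that bootstraps a $3$-complete rank-$r$ matroid to a $\PG(t-1,q)$-minor by iterated contraction. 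In both cases the passage from ``support $\le k$ vectors in a dense basis'' to ``$\PG(t-1,q)$-minor'' is a nontrivial matroid-theoretic step that your argument omits.
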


\subsection{Completing the proofs of Theorems~\ref{thm:main}(b) and~\ref{thm:main2}(b)}
First of all, notice that $A^4\sim A_{m}$, where $m=m_1+\eta n+ \eps_1 n=(d_k/k+\eta) n$ where $\eta=o(1)$. As $(A')^4$ is a submatrix of $A^4$ and $M'$ is a minor $(A')^4$, which contains $N$ as a minor by Lemma~\ref{lem:minor}, Theorems~\ref{thm:main}(b) and~\ref{thm:main2}(b) follow. \qed\ss

\begin{remark}\lab{r:field}
Step 1 holds for $\bF={\mathbb Q}$ with appropriate setting of $\P$ such as in Conjecture~\ref{con:rational}. See Remark~\ref{r:fieldgen}(b) for more discussions. The subsequent steps and Lemmas~\ref{lem:rows}, ~\ref{lem:U},~\ref{lem:key} and Observation~\ref{obs:Uspan} hold for any arbitrary field. The only part of the proof that specifically requires $\bF$ to be a finite field is for Lemma~\ref{lem:minor}. 
\end{remark}

We complete this section by proving Lemmas~\ref{lem:rows} and~\ref{lem:U}. We prove Lemma~\ref{lem:minor} assuming Lemma~\ref{lem:key} in Section~\ref{sec:minor}, where two distinct cases are discussed separately, depending on whether ${\mathbb F}$ is a prime field, or a field of order that is a prime power. Finally, Lemma~\ref{lem:key} is proved in Section~\ref{sec:key}.

\subsection{Proof of Lemmas~\ref{lem:rows} and~\ref{lem:U}}
\remove{%%%%%%%%
%%%%%%%%%
Let 
\begin{align}
f_t(\mu) & = e^{-\mu}\sum_{i\ge t} \frac{\mu^i}{i!}\\
h(\mu) & = \frac{\mu}{f_{1}(\mu)^{k-1}}\\
c_k &= \inf_{\mu>0} \frac{h(\mu)}{k}.
\end{align}
For any $c\ge c_k$, let $\mu(c)$ be the larger solution to
\[
\frac{h(\mu)}{k}=c.
\]
Let
\[
\alpha(c)=f_2(\mu(c)),\quad \beta(c)=\frac{1}{k}\mu(c)f_{1}(\mu(c)).
\]
The following result is from~\cite[Lemma 7]{gao2018stripping}, which follows by~\cite[Theorem 1.7]{kim2007poisson}.
\begin{lemma}
Let $c=m/n$ and let $A\sim A_{n,m,k,q,{\mathcal P}}$. A.a.s.\ $\core{A}$ has asymptotically $\alpha(c)n$ rows and $\beta(c)n$ columns. 
\end{lemma}
}

{\em Proof of Lemma~\ref{lem:rows}. } Recall $\rho_{k,d}$ and $d_k$ from~\eqn{eq:rho} and~\eqn{eq:dk}. Let $d_k^*=\inf\{d>0:\rho_{k,d}>0\}$. By~\cite[Theorem 1.6]{ayre2020satisfiability}, by letting $d=km/n$, the number of rows and columns in $\core{A}$ is a.a.s.\ asymptotic to $(\rho_{k,d}-d\rho_{k,d}^{k-1}+d\rho_{k,d}^k)n$ and $(d\rho_{k,d}^k/k)n$ respectively. Moreover, the function
\[
\pi_k: d \to \frac{d}{k}\rho_{k,d}^k\left(\rho_{k,d}-d\rho_{k,d}^{k-1}+d\rho_{k,d}^k\right)^{-1}
\]
is strictly increasing on $[d_k^*,\infty)$. By the definition of $d_k$, the monotonicity of $\pi_k$, and the assumption that $m=(d_k/k-\eps_1)n$, the assertion in the lemma follows immediately. 
\qed \ss

{\em Proof of Lemma~\ref{lem:U}. } Let $I=\Row(\core{(A^1)})$.
Since $\supp{v}$ is uniform over $\binom{[n]}{k}$, it follows that $\supp{v}$ is uniform over $\binom{I}{k}$ conditional on $\supp{v}\subseteq I$. Let $K>0$ be a sufficiently large constant. Suppose on the contrary that 
\[
\pr(v\notin \spn{(A^2)_U}\mid v\in \supp{\core{(A^1)}}) > K \eps_1/\eta.
\]
Note that $\pr(u_i\notin \spn{(A^2)_{U_{i-1}}}\mid u_i\in \supp{\core{(A^1)}})$ is non-increasing for $i$. It follows immediately that
\[
\pr(u_i\notin \spn{(A^2)_{U_{i-1}}}\mid u_i\in \supp{\core{(A^1)}}) > K \eps_1/\eta,\quad \mbox{for every $1\le i\le \eta n$.}
\]
Let $X_i=\ind{U_i=U_{i-1}\cup \{p_i\}}$. Then, $\sum_{i=1}^{\eta n} X_i$ stochastically dominates $\bin(\eta n, K \eps_1/\eta)$. By the Chernoff bound, 
\[
\pr\left(\sum_{i=1}^{\eta n} X_i<(K/2) \eps_1 n\right) \le \pr\Big(\bin(\eta n, K \eps_1/\eta)<(K/2) \eps_1 n\Big)=\exp(-\Omega(\eps_1 n)).
\]
Thus, a.a.s.\
$\sum_{i=1}^{\eta n} X_i \ge  (K/2) \eps_1 n$. It follows then that a.a.s.\ $|U|-|U_0|\ge (K/2) \eps_1 n$. But then the submatrix of $(A^2)_U$ restricted to the rows in  $\Row(\core{(A^1)})$ contains more columns than rows by Lemma~\ref{lem:rows}, contradicting with the fact that all column vectors in $U$ are linearly independent.
\qed
\ss

\section{Proof of Lemma~\ref{lem:minor}}
\label{sec:minor}

\begin{definition}
For an $r\times n'$ matrix $C$, we say that $C$ contains $\{u_1,\ldots, u_r\}$ as a $\delta$-dense basis for ${\mathbb F}_q^r$, if $\spn{u_1,\ldots, u_r}={\mathbb F}_q^r$, and for each $1\le i\le r$, there are at least $\delta n'$ column vectors in $C$ equal to $u_i$.
\end{definition}

Recall that $B^{[r]}$ is the matrix obtained by taking the first $r$ rows of $B$; thus $B^{[r]}$ is an $r\times n'$ matrix. By Lemma~\ref{lem:rows}, $n'\ge n_1=\Theta(n)$. We prove that a.a.s.\ $B^{[r]}$ has a $\delta$-dense basis, for some constant $\delta>0$, if ${\mathbb F}$ is a finite field.

\begin{lemma}\lab{lem:denseBasis} Let $q$ be a prime power and let $r>0$ be a fixed positive integer.  Then there exists a constant $\delta>0$ such that  a.a.s.\ 
$B^{[r]}$ contains a $\delta$-dense basis for ${\mathbb F}_q^r$.
\end{lemma}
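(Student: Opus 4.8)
\textbf{Proof plan for Lemma~\ref{lem:denseBasis}.}

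The goal is to show that the $r\times n'$ matrix $B^{[r]}=(A'')^{-1}$ restricted to its first $r$ rows has, for some fixed $\delta>0$, at least $\delta n'$ columns equal to each of $r$ fixed spanning vectors. The natural strategy is a two-moment / concentration argument on the \emph{column type distribution} of $B^{[r]}$. First I would invoke Lemma~\ref{lem:key}: with $\delta_0>0$ the constant it provides, a.a.s.\ every nonzero $\bbF_q$-linear combination of the first $r$ rows $b_1,\dots,b_r$ of $B$ is supported on at least $\delta_0 n'$ coordinates. I would use this to control, for each fixed vector $w\in\bbF_q^r$, the number $N_w$ of columns $j\in[n']$ with the $j$-th column of $B^{[r]}$ equal to $w$. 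The point is an inclusion–exclusion (or Fourier/character) identity: $N_w = q^{-r}\sum_{\alpha\in\bbF_q^r}\chi(-\alpha\cdot w)\,S_\alpha$, where $S_\alpha=\sum_{j}\chi(\alpha\cdot (\text{column }j))$ and $\chi$ is a fixed nontrivial additive character of $\bbF_q$. The term $\alpha=0$ contributes $q^{-r}n'$; every other term $S_\alpha$ is a character sum of $\sum_j \chi((\sum_i \alpha_i b_i)_j)$, which, since the vector $\sum_i\alpha_i b_i$ has at least $\delta_0 n'$ nonzero entries, is a sum over $\ge\delta_0 n'$ nonzero field elements (padded with $1$'s from zero coordinates). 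Then $N_w \ge q^{-r}n' - q^{-r}\sum_{\alpha\ne 0}|S_\alpha|$, and it suffices to bound $\sum_{\alpha\ne 0}|S_\alpha|$ by, say, $\tfrac12 n'$ with high probability; setting $\delta=q^{-r}/4$ would then finish.

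The delicate point is that the rows $b_i$ of $B=(A'')^{-1}$ are \emph{not} independent coordinates — they are complicated functions of the random sparse matrix $A''$ — so the character sums $S_\alpha$ are not sums of independent increments and cannot be bounded by a naive Chernoff argument applied to $B$ directly. The right move, I expect, is to argue on the \emph{original} matrix rather than its inverse. Recall $B A'' = I$, equivalently $\sum_i \alpha_i b_i$ is the unique row vector $c$ with $c A'' = \alpha^\top$ (reading $\alpha$ as supported on the first $r$ coordinates), i.e.\ $c$ is determined by $r$ linear constraints against the columns of $A''$. One can then take an independent fresh random column $v\sim\bfv$ (from the sprinkling reservoir, or re-sampled), condition on $\supp v\subseteq \Row(\core{(A^1)})$ so that $v$ genuinely "sees" the relevant rows, and observe that the $v$-coordinate of $B^{[r]}$ — which is $(c_1 v,\dots)$ where more precisely the new column of $B^{[r]}$ corresponding to adjoining $v$ is $B^{[r]} v'$ for the truncation $v'$ of $v$ — is, conditionally, close to uniform over $\bbF_q^r$ provided $\sum_i\alpha_i b_i$ has $\Omega(n')$ nonzero entries and the nonzero entries of $v$ are spread by $\P$; this is exactly the kind of "random weighted subset sum is nearly equidistributed" statement that Lemma~\ref{lem:U} already trades on. Averaging over the $\Theta(n')$ genuine columns and using a second-moment / Azuma concentration over the (independent across columns, once $A^1$ is fixed) choices of which original column lands where gives $N_w=\Theta(n')$ a.a.s.\ for each of the finitely many $w$, in particular for the $r$ chosen basis vectors.

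\textbf{Main obstacle.} The hard part is precisely the dependence issue: making rigorous that, given the conclusion of Lemma~\ref{lem:key} (every small linear combination of the $b_i$'s is dense), the columns of $B^{[r]}$ are \emph{equidistributed enough} over $\bbF_q^r$ to guarantee $\Theta(n')$ copies of each target vector. I would handle this by exposing the randomness in the right order — fix $\core{(A^1)}$ and the structural data from Steps 1–3, note that the identity of the column of $A''$ sitting in a given position, together with the $\P$-weights, provides independent randomness column-by-column — and then the per-column contribution to $\chi(-\alpha\cdot w)\chi(\alpha\cdot(\text{col }j))$ averages to something of modulus bounded away from $1$ whenever $\sum_i\alpha_i b_i\ne 0$ has density $\ge\delta_0$, because a weighted sum $\sum_{\ell} \sigma_\ell (b_i)_{\cdot}$ with a linear number of nonzero $\bbF_q$-terms and $\P$-random multipliers $\sigma_\ell\in\bbF_q^*$ cannot concentrate on any single residue. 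Establishing that last quantitative anticoncentration (a Littlewood–Offord-type bound over $\bbF_q$, uniform in $\alpha\ne 0$ using $\P(\sigma)>0$ for all $\sigma\in\bbF_q^*$, or in the prime-field case using that the permitted ratios generate $\bbF_p$) is where the real work lies; everything after it is a standard union bound over the $q^r-1$ choices of $\alpha$ and the finitely many target vectors $w$.
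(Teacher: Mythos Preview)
Your proposal correctly identifies Lemma~\ref{lem:key} as the key input, but then takes a far more circuitous route than necessary, and the detour has a genuine gap. The Fourier identity is fine, but the bound you need, $\sum_{\alpha\ne 0}|S_\alpha|\le \tfrac12 n'$, does \emph{not} follow from Lemma~\ref{lem:key}: knowing that $c=\sum_i\alpha_ib_i$ has $\ge\delta_0 n'$ nonzero entries says nothing about cancellation in $S_\alpha=\sum_j\chi(c_j)$ (if all nonzero entries of $c$ happen to be equal, $|S_\alpha|$ is still of order $n'$). You recognize this and propose to extract extra randomness via a Littlewood--Offord argument on the columns of $A''$, but the columns of $B^{[r]}$ are indexed by the \emph{rows} of $A''$, not its columns, so the ``independent randomness column-by-column'' you invoke is not the column-randomness of $A^1$; making this rigorous would require substantial new work, essentially an equidistribution statement for $B^{[r]}$-columns that is neither proved in the paper nor obviously true (nothing in Lemma~\ref{lem:key} prevents some particular $w\in\bF_q^r$ from appearing zero times as a column).

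The paper's proof bypasses all of this with a one-paragraph pigeonhole from Lemma~\ref{lem:key} alone. Build the basis greedily: given linearly independent $(\delta/q^r)$-dense columns $u_1,\dots,u_h$ of $B^{[r]}$ with $h<r$, let $C$ be the set of columns of $B^{[r]}$ lying in $\spn{u_1,\dots,u_h}$. Since $\rk{B^{[r]}_C}=h<r$, some nonzero $\alpha\in\bF_q^r$ annihilates every column in $C$, so $\alpha^T B^{[r]}$ has at most $n'-|C|$ nonzero entries; Lemma~\ref{lem:key} forces $n'-|C|\ge\delta n'$. Thus at least $\delta n'$ columns lie outside $\spn{u_1,\dots,u_h}$, and among at most $q^r$ possible values one of them occurs at least $(\delta/q^r)n'$ times, furnishing $u_{h+1}$. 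No character sums, no anticoncentration, no concentration inequality --- and note that the paper only claims \emph{some} basis is dense, not that every vector is, which is exactly why the greedy pigeonhole suffices where your uniform-distribution target does not.
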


\proof We prove this lemma assuming Lemma~\ref{lem:key}. Let $\delta>0$ be the constant whose existence is guaranteed by Lemma~\ref{lem:key}. We prove that $B^{[r]}$ contains a $\delta/q^r$-dense basis for $\bF_q^r$. We say $u$ is a $(\delta/q^r)$-dense column vector of $B^{[r]}$ if there are at least $(\delta/q^r) n'$ column vectors of $B^{[r]}$ equal to $u$.  We prove that if $u_1,u_2,\ldots, u_h$ are linearly independent $(\delta/q^r)$-dense column vectors of $B^{[r]}$ and $h<r$, then there exists a $(\delta/q^r)$-dense column vector $u_{h+1}$ of $B^{[r]}$ such that $u_1,u_2,\ldots,u_{h+1}$ are linearly independent. Note that this immediately implies the assertion of the lemma.

Let $C$ be the set of column vectors of $B^{[r]}$ that are in $\spn{u_1,\ldots, u_h}$. 
We prove that 
\begin{equation}
|C|\le (1-\delta)n'. \lab{eq:C}
\end{equation}
Since $u_1,\ldots, u_h$  are linearly independent, $\rk{B^{[r]}_C}=h<r$. Hence there exists a nonzero $\alpha\in \bF_q^r$ such that $\alpha^T C =0$. In other words, $\alpha^T B_{[r]}$ contains at most $n'-|C|$ nonzero elements. By Lemma~\ref{lem:key}, $n'-|C|\ge \delta n'$, confirming~\eqn{eq:C}. 
Since $|\bF_q^r|= q^r$, there exists a column vector $u$  such that $u\notin C$ and $u$ appears at least $(\delta/q^r) n'$ times in $B^{[r]}$. Then, $u$ is $(\delta/q^r)$-dense, and $u_1,\ldots, u_h,u$ are linearly independent. Thus we may choose $u_{h+1}=u$.\qed
\ss

\subsection{Proof of Lemma~\ref{lem:minor} when ${\mathbb F}={\mathbb F}_p$ where $p$ is prime}

We use the following growth rate theorem by Geelen, Kabell, Kung and Whittle~\cite[Corollary 1.5]{geelen2009growth} to find $N$ as a minor of $M'$.
\begin{thm}\lab{thm:rate}
Let $p$ be a prime. For every integer $t\ge 2$ there exists $C_t>0$ such that for all positive integer $r$, if $M$ is an ${\mathbb F}_p$-representable rank-$r$ matroid with at least $C_t r^2$ elements then $M$ 	contains $\PG(t-1,p)$ as a minor.
\end{thm}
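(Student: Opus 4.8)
The plan is to deduce Theorem~\ref{thm:rate} from the growth rate theorem of Geelen, Kabell, Kung and Whittle, applied to the minor-closed class of $\bF_p$-representable matroids with no $\PG(t-1,p)$-minor; and, since that theorem is itself the substantive input, to indicate how its relevant half is proved in the $\bF_p$-representable setting.

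First reduce to simple matroids. Since $\PG(t-1,p)$ is simple, $M$ has a $\PG(t-1,p)$-minor if and only if its simplification $\mathrm{si}(M)$ does, and $\mathrm{si}(M)$ has the same rank $r$; so it suffices to produce a constant $C_t$ such that every \emph{simple} $\bF_p$-representable rank-$r$ matroid with more than $C_t r^2$ elements has a $\PG(t-1,p)$-minor. Let $\mathcal{M}_t$ be the class of $\bF_p$-representable matroids with no $\PG(t-1,p)$-minor; it is minor-closed, it does not contain all simple rank-$2$ matroids (e.g.\ $U_{2,p+2}$ is not $\bF_p$-representable), and it contains no $\PG(s-1,p)$ with $s\ge t$ (each has $\PG(t-1,p)$ as a restriction, by the Observation above). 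By the growth rate theorem~\cite{geelen2009growth}, the growth rate function $h_{\mathcal{M}_t}(r)=\max\{\lvert\mathrm{si}(M)\rvert : M\in\mathcal{M}_t,\ r(M)=r\}$ is, for all large $r$, either $O(r)$, or $\Theta(r^2)$, or $\Theta(q^r)$ for some prime power $q$ with $\mathcal{M}_t$ then containing \emph{every} $\bF_q$-representable matroid. The last case cannot occur: it would force $q=p$ (otherwise some $\bF_q$-representable matroid, such as $\PG(2,q)$, fails to be $\bF_p$-representable), and then $\PG(t-1,p)\in\mathcal{M}_t$, a contradiction. Hence $h_{\mathcal{M}_t}(r)=O(r^2)$, which yields the required $C_t$.

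For a more self-contained treatment one proves the needed direction of the growth rate theorem directly in the $\bF_p$-representable case; equivalently one can cite Geelen--Kung--Whittle's ``Projective geometries in dense matroids,'' which yields a $\PG(t-1,p)$-minor directly from quadratic density once line lengths are bounded by $p+1$ (automatic for $\bF_p$-representable matroids, $p$ prime). The architecture is: \emph{(i) a round reduction} --- if a simple $\bF_p$-representable $M$ has $\lvert\mathrm{si}(M)\rvert>\alpha\, r(M)^2$, then $M$ has a \emph{round} minor $N$ (no two disjoint cocircuits, i.e.\ $E(N)$ is not covered by two hyperplanes) with $\lvert\mathrm{si}(N)\rvert>\beta\, r(N)^2$ and $r(N)$ large; and \emph{(ii) building the geometry} --- in a round matroid every cocircuit is spanning, so contracting any non-loop keeps it round while dropping the rank by one and the point count by a controlled factor; iterating this, together with Kung-type line counting inside the successive contractions, forces a quadratically dense round $\bF_p$-representable matroid to have a $\PG(2,p)$-minor, and a Ramsey/amalgamation argument then promotes this to a $\PG(t-1,p)$-minor by gluing together many geometry-minors living in essentially independent parts of the matroid.

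The main obstacle is step (i). The naive route --- if $M$ is not round, cover $E(M)$ by two hyperplanes, keep the denser side, recurse --- loses a factor of $2$ each time the rank drops by one, and therefore preserves only \emph{linear} density. Preserving \emph{quadratic} density requires instead reaching roundness by \emph{contracting} a carefully chosen flat so that the point count falls by at most a bounded factor while the rank falls enough, and this counting is the technical heart of the argument. A secondary obstacle is the bootstrapping in step (ii): a single $\PG(2,p)$-minor does not by itself give arbitrarily large projective-geometry minors, and locating and amalgamating many ``parallel'' small geometries again relies on quadratic rather than merely linear density.
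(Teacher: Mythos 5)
The paper offers no proof of this statement at all: Theorem~\ref{thm:rate} is imported verbatim as Corollary~1.5 of Geelen--Kung--Whittle~\cite{geelen2009growth}, so there is nothing internal to compare against. Your first paragraph is a correct and complete derivation of the statement from the Growth Rate Theorem (apply the trichotomy to the minor-closed class of $\bF_p$-representable matroids with no $\PG(t-1,p)$-minor, rule out the exponential branch since it would force the class to contain $\PG(t-1,p)$ itself), and this is essentially how the cited corollary is obtained in the source; the remainder of your proposal is explicitly a sketch of the Growth Rate Theorem's own proof and need not be assessed here. One caveat worth recording: as literally stated (with ``elements'' rather than points), the theorem is false --- a rank-$r$ matroid consisting of $r$ independent parallel classes of size $C_t r$ has $C_t r^2$ elements and no $U_{2,3}$-minor --- so your ``reduction to simple matroids'' does not actually cover the literal statement; the correct reading, which is both what \cite{geelen2009growth} proves and what the paper uses (it is applied to a simple restriction of $M'$), replaces the element count by $\lvert\mathrm{si}(M)\rvert$, and your argument proves exactly that version.
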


{\em Proof of Lemma~\ref{lem:minor} (when  ${\mathbb F}={\mathbb F}_p$). }  Let $C_t$ be the constant in Theorem~\ref{thm:rate}.
We prove that a.a.s.\ $M'$ has rank $r$ and at least $\binom{r}{k}$ elements. Since $k\ge 3$, this implies that $M'$ contains at least $C_t r^2$ elements by choosing sufficiently large $r$ and
consequently, $M'$ contains $\PG(t-1,p)$ as a minor by Theorem~\ref{thm:rate}.  

Recall that $M'=M[A^6]$. By~\eqn{A6rank}, $\rk{M'}=r$.
By Lemma~\ref{lem:denseBasis}, let $u_1,\ldots, u_r$ be column vectors of $B^{[r]}$ that form a $\delta$-dense basis for $\bF_p^r$.
Let $\alpha=(\alpha_1,\ldots,\alpha_k)$ be such that $\P(\alpha_1,\ldots,\alpha_k)>0$. Let $u_J=[u_j]_{j\in J}$ be the matrix obtained by including all column vectors in $\{u_j: \ j\in J\}$. Define
\[
S=\{u_J \alpha^T: \ J\subseteq [r], |J|=k.\}
\]
Then $|S|= \binom{r}{k}$. It suffices to prove that $A^6$ contains all vectors in $S$.

 Fix $v\in S$,
For $1\le j\le \eps_2 n$, let $X^v_j$ be the indicator variable that 
\[
B^{[r]}v'_j = v, \quad \mbox{and}\ v'_j\in \spn{(A_5)_U}.
\]
%Note that if $v'_j\in \spn{(A_5)_U}$ then the last few entries of $Bv'_j$ all equal to 0. 
Thus, $v$ is a column vector of $A^6$ if $\sum_{1\le j\le \eps_2 n} X^v_j\ge 1$. Suppose $v=\sum_{j=1}^k \alpha_j u_{i_j}$ for some $\{u_{i_1},\ldots, u_{i_k}\}\subseteq \{u_1,\ldots, u_r\}$. Since $u_1,\ldots, u_r$ are all $\delta$-dense, and $\P(\alpha_1,\ldots,\alpha_k)>0$, $\pr(B^{[r]}v'_j = v)=\Omega(\delta)$, since $\supp{v_j'}$ is uniform in $\binom{\I}{k}$, where $\I=\Row(\core{(A^1)})$. Thus,
\[
\pr(X^v_j=1)=\Omega(\delta)-\pr(v'_j\notin \spn{(A_5)_U})=\Omega(\delta),
\]
as $\pr(v'_j\notin \spn{(A_5)_U})=o(1)$ by Lemma~\ref{lem:U} and the choice that $\eps_1=o(\eta)$.
By the Chernoff bound,
\[
\pr\left(\sum_{1\le j\le \eps_2 n} X^v_j=0\right)=\exp(-\Omega(\eps_2 n)),
\]
and thus by the union bound, a.a.s.\ $\sum_{1\le j\le \eps_2 n} X^v_j\ge 1$ for every $v\in S$. Hence, a.a.s.\ $A^6$, the matrix representing $M'$, contains all $\binom{r}{k}$ vectors in $S$. 
 \qed

%\[
%\left[
%\begin{array}{ccc}
%B & | & 0\\
%0 & | & I
%\end{array}
%\right] A'_4=\left[
%\begin{array}{ccc}
%I & | & B v_1,\ldots, B v_{\eps_1 n}\\
%0 & | & *
%\end{array}
%\right]
%\]

\subsection{Proof of Lemma~\ref{lem:minor} when ${\mathbb F}={\mathbb F}_q$ where $q$ is a prime power}

In this section, let $\bF=\bF_q$ where $q$ is a prime power. Consequently, we assume that $\P$ is a permutation-invariant distribution on $({\mathbb F}_q^*)^k$ such that ${\mathcal P}(\sigma_1,\ldots,\sigma_k)>0$ for every $(\sigma_1,\ldots,\sigma_k)\in ({\mathbb F}_q^*)^k$, as required in the hypotheses of Theorem~\ref{thm:main2}.

 Given integer $\ell \ge 1$, a matrix $A \in \bF^{R \times E}$ is said \emph{$\ell$-complete} if $|R| \ge \ell$ and, for every nonzero vector $v \in \bF^R$ with support size at most $\ell$, there is a nonzero column of $A$ that is parallel to $v$. (So an $\bF_q$-complete matrix has at least $\frac{1}{q-1}\sum_{i=1}^{\ell}\binom{|R|}{i})$ columns.) We say a matroid $M$ is \emph{$\ell$-complete} if it is represented by an $\ell$-complete matrix $A$. Since such an $A$ has (a multiple of) each standard basis vector as a column, the rank of such an $M$ is equal to the number of rows of $A$. 

\begin{lemma}\label{stepup}
  Let $s \ge \ell \ge 3$. If $M$ is an $\ell$-complete $\bF_q$-representable matroid of rank at least $s + q^2\binom{s}{2}$, then $M$ has an $(\ell+1)$-complete minor of rank $s$. 
\end{lemma}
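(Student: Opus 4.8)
The plan is as follows. Fix an $\bF_q$-representation $A \in \bF_q^{R\times E}$ of $M$; since $M$ is $\ell$-complete, $\rk{M}=|R|=:d\ge s+q^2\binom{s}{2}$. Choose any $R_0\subseteq R$ with $|R_0|=s$ and set $R_1=R\setminus R_0$, so $|R_1|=d-s\ge (q-1)^2\binom{s}{2}$. Call a pair $(\{i,i'\},\beta)$ with $\{i,i'\}\subseteq R_0$ and $\beta=(\beta_i,\beta_{i'})\in(\bF_q^*)^2$ a \emph{task}; there are exactly $(q-1)^2\binom{s}{2}$ tasks. Since $|R_1|$ is at least the number of tasks, assign every row of $R_1$ to a task so that each task receives at least one row, which we call its \emph{helpers}. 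Because $\ell\ge 3$, for every task $(\{i,i'\},\beta)$ and every helper $j$ of it, $\ell$-completeness provides a column $c_{\{i,i'\},\beta,j}$ of $A$ parallel to $e_j-\beta_i e_i-\beta_{i'}e_{i'}$ (support $3\le\ell$); let $X$ be the set of all these columns. The claimed minor is $M\cn X$.

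First I would establish the structural facts about $X$. The assignment of a column of $X$ to its helper is a bijection onto $R_1$, and $c_{\{i,i'\},\beta,j}$ is supported on $\{j\}\cup\{i,i'\}$ with $\{i,i'\}\subseteq R_0$; hence the restriction of $A_X$ to the rows $R_1$ is a nonsingular diagonal matrix, so $\rk{A_X}=|X|=d-s$ and the columns of $X$ are independent. Consequently $M\cn X$ is represented by the matrix $A'$ on row set $R_0$ obtained from $A_{\setminus X}$ by the row reduction that turns $A_X$ into $\left[\begin{smallmatrix}I\\0\end{smallmatrix}\right]$ (with $I$ occupying the rows $R_1$): that reduction fixes every row except that it replaces each row $i\in R_0$ by $(\text{row }i)+\sum_j\beta_i(\text{row }j)$, the sum running over helpers $j$ of tasks whose pair contains $i$ (with $\beta_i$ the relevant coefficient of that task). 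In particular $\rk{M\cn X}=d-\rk{A_X}=s$, so it suffices to prove $A'$ is $(\ell+1)$-complete, i.e.\ that it has a column parallel to every nonzero $v\in\bF_q^{R_0}$ with $|\supp{v}|\le\ell+1$.

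For $v$ with $|\supp{v}|\le\ell$: the column of $A$ parallel to $v$ has support $\supp{v}\subseteq R_0$, so it is not in $X$ and is untouched by the row reduction; thus $A'$ still contains it, parallel to $v$. For $v$ with $|\supp{v}|=\ell+1$: write $\supp{v}=\{i,i'\}\cup S$ with $|S|=\ell-1\ge 2$, put $\beta=(v_i,v_{i'})$, pick a helper $j$ of the task $(\{i,i'\},\beta)$, and let $w$ be the column of $A$ parallel to $e_j+\sum_{r\in S}v_re_r$ (support $\ell$, so it exists). The only column of $X$ whose support contains $j$ is $c_{\{i,i'\},\beta,j}$, with support $\{j,i,i'\}$; since $\{i,i'\}\cap S=\varnothing$ and $|S|\ge 2$, this is not $w$, so $w\notin X$. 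Moreover $\supp{w}=\{j\}\cup S$ contains exactly one row of $R_1$, namely $j$, so among the row operations above only those attached to helper $j$ affect $w$: on $R_0$ they add (the appropriate scalar multiple of) $\beta_ie_i+\beta_{i'}e_{i'}=v_ie_i+v_{i'}e_{i'}$ to $w|_{R_0}$, which is that same multiple of $\sum_{r\in S}v_re_r$. Hence $w$ becomes a nonzero multiple of $v$ in $A'$, so $A'$ has a column parallel to $v$, and $M\cn X$ is an $(\ell+1)$-complete minor of rank $s$.

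The main obstacle is the bookkeeping in the last paragraph: one must check that carrying out all the row operations associated with contracting $X$ simultaneously has exactly the stated effect on each carrier $w$ and on the low-support witnesses, with no cross-interference between the bridges belonging to different tasks or different helpers of the same task. This works because every row that the reduction rescales, modifies, or deletes lies in $R_1$, whereas each target $v$ to be realized is supported on $R_0$; the hypothesis $\ell\ge 3$ enters precisely to ensure that the bridges $c_{\{i,i'\},\beta,j}$ (support $3$) and the carriers $w$ (support $\le\ell$) are supplied by $\ell$-completeness, and the hypothesis $\rk{M}\ge s+q^2\binom{s}{2}$ enters only to guarantee enough helper rows ($q^2\ge(q-1)^2$).
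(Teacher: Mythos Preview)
Your proof is correct and follows essentially the same strategy as the paper's: contract a set of support-$3$ ``bridge'' columns, one for each extra row, each linking that row to a designated pair in $R_0$ with prescribed coefficients, and then observe that any target vector of support $\le \ell+1$ can be hit by starting from a support-$\le \ell$ carrier that uses one helper row in place of two $R_0$-coordinates. The only cosmetic differences are that the paper indexes tasks by $\bF_q^2$ rather than $(\bF_q^*)^2$ (so exactly one helper per task and no case split on $|\supp v|$), whereas you use $(\bF_q^*)^2$, allow surplus helpers, and handle the small-support case separately; neither change affects the argument.
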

\begin{proof}
   Let $A$ be a $k$-complete matrix representing $M$. By possibly removing some rows and their corresponding support-one columns of $A$ to obtain a minor of $M$, we may assume that $A$ has precisely $s + q^2\binom{s}{2}$ rows. Identify the rows of $A$ with the set $R_1 \cup R_2$, where $R_1 = [s]$ and $R_2 = \bF^2 \times \binom{[s]}{2}$. Let $\{x_i : i \in [s]\} \cup \{y_{\alpha,\beta,I} : \alpha,\beta \in \bF, I \in \binom{[s]}{2}\}$ be the corresponding collection of standard basis vectors of $\bF^R$, where $R=R_1\cup R_2$. The columns of $A$ corresponding to these vectors $\{x_i\}$ and $\{y_{\alpha,\beta,I}\}$ are indexed by elements in $R$.
  
  For each $\alpha \in \bF$ and $I = \{i_1,i_2\} \in \binom{[s]}{2}$ with $i_1 < i_2$, let $v_{\alpha,\beta,I} = y_{\alpha,\beta,I} - \alpha x_{i_1} - \beta x_{i_2}$. Since $k \ge 3$ and this vector has support $3$, the matrix $A$ has a nonzero column parallel to $v_{\alpha,\beta,I}$; let $e_{\alpha,\beta,I}$ be the corresponding element of $M$; thus, $e_{\alpha,\beta,I}=\mu v_{\alpha,\beta,I}$ for some $\mu\in \bF^*$. We will contract the set $C = \{e_{\alpha,\beta,I} : \alpha,\beta \in \bF, I \in \binom{[s]}{2}\}$ to obtain the desired $(k+1)$-complete minor. 
  
  Let $A'$ be the matrix obtained from $A$ by the following sequence of operations. For each $\alpha,\beta \in \bF$ and $I=\{i,j\} \in \binom{[s]}{2}$ with $i < j$, adding $\alpha$ times the $({\alpha,\beta,I})$-row of $A$ to the $i$-row and $\beta$ times the $({\alpha,\beta,I})$-row to the $j$-row. Now, for each $I$, the $I$-column of $A'$ is a multiple of the standard basis vector $y_I$. Therefore $M/C = M[B]$, where $B$ is obtained from $A'$ by removing all the rows indexed by $I$, and removing all columns indexed by $C$.
    
  We need to show that for each $u \in \bF^s\setminus\{0\} $ of support at most $k+1$, there is a nonzero column of $B$ parallel to $u$. That is, for each such $u$, there are distinct $\ell_1, \dotsc, \ell_{k+1} \in [s]$ and some $(\lambda_1, \dotsc, \lambda_{k+1}) \in \bF^{k+1}\setminus\{0\}$ such that $u$ is nonzero outside the rows in $\{\ell_1, \dotsc, \ell_{k+1}\}$, and has entry $\lambda_i$ in row $\ell_i$. Consider the following vector in $\bF^R$: 
    \[v = y_{\lambda_1,\lambda_2,\{\ell_1,\ell_2\}} + \sum_{i=3}^{k+1} \lambda_i x_{\ell_i}. \]
  It is clear that $v$ is nonzero and has support at most $k$, and so $\mu v$ is a column of $A$ for some $\mu \in \bF^*$. Since the only nonzero entry of $v$ indexed by $R_2$ is the $y_{\ell_1,\ell_2}$ entry, it is routine to verify that the corresponding column of $A'$ is obtained from $v$ by adding $\lambda_1$ to the $\ell_1$-entry, and $\lambda_2$ to the $\ell_2$-entry, which gives the vector $v' = \mu(y_{\lambda_1,\lambda_2,\{\ell_1,\ell_2\}}  + \sum_{i=1}^{k+1} \lambda_i x_{\ell_i})$. Since $(\lambda_1, \dotsc, \lambda_{k+1}) \ne 0$, and
  all columns in $C$ are zero on the $R_1$-indexed entries, it follows that $v'$ is not parallel to any column in $C$. Removing the $R_2$ indexed entries from $v'$ gives (a multiple of) the desired vector $u$ as a column of $B$.  \qed
\end{proof}

\begin{lemma}~\lab{lem:qminor}
  Let $t \ge 3$, and $q$ be a prime power. If $A$ is a matrix over $\bF_q$ with at least $(qt)^{2^t}$ rows, and every standard basis vector and every support-three vector is a column of $A$, then $M(A)$ has a $\PG(t-1,q)$-minor. 
\end{lemma}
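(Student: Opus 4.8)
The plan is to iterate Lemma~\ref{stepup} a bounded number of times, pushing the completeness parameter up from $3$ to $t$ (while the rank shrinks), and then to observe that a $t$-complete matroid of rank $t$ contains $\PG(t-1,q)$ as a restriction, hence as a minor.

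First I would check that $M[A]$ (or a trivial minor of it) is $3$-complete. The column set of $A$ contains every standard basis vector of $\bF_q^{R}$ (support one) and every support-three vector, and $|R|\ge (qt)^{2^t}\ge 3$. If ``support-three'' is read as ``support exactly three'', then the support-two vectors are missing, but these are recovered after contracting a single standard basis element $e_\ell$: every support-three column through $\ell$ restricts to a support-two vector, and every support-two vector $ae_i+be_j$ with $i,j\ne\ell$ arises this way from the column $ae_i+be_j+e_\ell$. So $M[A]$, or $M[A]/e_\ell$, is a $3$-complete matroid of rank $r\ge (qt)^{2^t}-1$.

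Next set $f(s)=s+q^2\binom s2$ and define $s_t=t$ and $s_\ell=f(s_{\ell+1})$ for $\ell=t-1,t-2,\dots,3$, so that $s_\ell=f^{(t-\ell)}(t)$ and $s_3>s_4>\dots>s_t=t$. For each $\ell\in\{3,\dots,t-1\}$ we have $s_{\ell+1}\ge s_t=t\ge\ell$, so Lemma~\ref{stepup} applies with $(s,\ell)=(s_{\ell+1},\ell)$: an $\ell$-complete matroid of rank at least $f(s_{\ell+1})=s_\ell$ has an $(\ell+1)$-complete minor of rank $s_{\ell+1}$. Provided $r\ge s_3$, we may therefore start from the $3$-complete matroid above and apply Lemma~\ref{stepup} successively for $\ell=3,4,\dots,t-1$, each output feeding in as the next input (minors of minors are minors), arriving at a $t$-complete minor $M_t$ of $M[A]$ of rank $t$. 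Such an $M_t$ is represented by a matrix with $t$ rows having a column parallel to every nonzero vector of $\bF_q^t$, each of which has support at most $t$; deleting columns so as to keep exactly one representative of each $1$-dimensional subspace of $\bF_q^t$ gives the standard representation of $\PG(t-1,q)$. Hence $M_t$, and therefore $M[A]$, has a $\PG(t-1,q)$-minor. (When $t=3$ the iteration is empty: the $3$-complete matroid already has rank at least $t$, and restricting to the columns supported on a fixed $3$-subset of its rows and then deleting the resulting zero rows directly exhibits a $\PG(2,q)$-restriction.)

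Finally one verifies the rank bound $r\ge s_3=f^{(t-3)}(t)$. Since $f(s)\le q^2s^2$ for all $s\ge1$ and $f$ is nondecreasing, $f^{(t-3)}(t)\le g^{(t-3)}(t)$ where $g(s)=q^2s^2$; an easy induction gives $g^{(j)}(t)=q^{2(2^j-1)}t^{2^j}\le(qt)^{2^{j+1}}$, whence $f^{(t-3)}(t)\le(qt)^{2^{t-2}}\le(qt)^{2^t}$, with plenty of room to absorb the $-1$ from the possible contraction. Since all the structural content is packaged in Lemma~\ref{stepup}, there is no genuine obstacle here beyond bookkeeping; the points requiring care are that the hypothesis of Lemma~\ref{lem:qminor} really does yield a $3$-complete matroid (the contraction in paragraph two), that the side condition $s\ge\ell$ of Lemma~\ref{stepup} holds at every stage of the recursion (it does, since every $s_\ell\ge t$), and that the initial rank $(qt)^{2^t}$ survives $t-3$ applications of $f$ (the estimate just given) — the first of these being the only mildly delicate step.
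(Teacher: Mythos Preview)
Your proposal is correct and follows essentially the same strategy as the paper: iterate Lemma~\ref{stepup} from completeness $3$ up to $t$, checking that the initial rank $(qt)^{2^t}$ survives the recursion, and then observe that a $t$-complete rank-$t$ matroid is $\PG(t-1,q)$. The paper phrases the iteration as a maximality/contradiction argument with the explicit sequence $n_k=(q^2t)^{2^{t-k}}/q^2$, while you recurse directly via $s_\ell=f(s_{\ell+1})$ and bound $s_3$ afterwards; these are equivalent.

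You are in fact more careful than the paper on one point: the paper asserts without comment that ``all support-three vectors are columns of $A'$, so $M(A')$ is $3$-complete'', but $3$-completeness also requires the support-two vectors, which are not assumed present. Your fix (contract a single standard basis column $e_\ell$, so that support-three columns through $\ell$ supply all support-two vectors in the quotient) is exactly right and costs only one unit of rank, which your numerical estimate easily absorbs.
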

\begin{proof}
  For each $k \in \{3, \dotsc, t\}$, let $n_k = (q^2t)^{2^{t-k}}/q^2$. By the hypothesis, the matrix $A$ has more than $n_3$ rows; let $A'$ be obtained from $A$ by removing $\rk{A}-n_3$ rows and the corresponding standard basis vector column. 
  Now $M(A')$ is a minor of $M(A)$, and all support-three vectors are columns of $A'$, so $M(A')$ is $3$-complete with rank equal $n_3$. Let $k \in \{3, \dotsc, t\}$ be the maximum integer so that $M$ has a $k$-complete minor $M_k$ of rank at least $n_k$. 
  
  If $k < t$, then $M_k$ is $k$-complete with rank $n_k$. Now 
  \[ n_k = (q^2t)^{2^{t-k}}/q^2 = ((q^2t)^{2^{t-(k+1)}})^2/q^2 = (q^2 n_{k+1})^2/q^2 \ge n_{k+1} + q^2 \tbinom{n_{k+1}}{2}\]
  so by Lemma~\ref{stepup}, the matroid $M_k$ has a $(k+1)$-complete minor of rank $n_{k+1}$, which contradicts the maximality of $k$. 

  If $k = t$, then $M$ has a $t$-complete minor $M_t$ of rank $n_t = t$, with a $t$-complete representation $A_t$. Since every nonzero vector in $\bF_q^t$ is parallel to a column of $A_t$, we have $M_t = M(A_t) \cong \PG(t-1,q)$, so $M$ has the required minor.   \qed
\end{proof}
\medskip

 {\em Proof of Lemma~\ref{lem:minor} (when $\bF=\bF_q$)}.  Let $r=(qt)^{2^t}$.  Recall that $M'=M[A^6]$. By Lemma~\ref{lem:qminor}, it suffices to prove that a.a.s.\ $A^6$ is row equivalent to a matrix $C$ such that  the column vectors in $C$  contain every standard basis vector, and every support-three vector in $\bF_q^r$. By Lemma~\ref{lem:denseBasis}, let $u_1,\ldots, u_r$ be a $\delta$-dense basis of $B_{[r]}$ for $\bF_q^r$. Let 
 \[
 S=\{[u_1,\ldots,u_r]\alpha: \alpha \in \bF_q^r, 1\le |\supp{\alpha}|\le k  \}.
 \]
Again by the standard Chernoff bound and an analogous argument as in the proof of this lemma for the prime field case, it is easy to see that a.a.s.\ $A^6$ contains every vector in $S$. In particular, $A^6$ contains all vectors $u_1,\ldots, u_r$. Applying row operations to reduce $[u_1,\ldots, u_r]$ to the identity matrix, these row operations reduce $A^6$ to a matrix $C$ which contains all vectors in
 \[
 S'=\{\alpha \in \bF_q^r: 1\le |\supp{\alpha}|\le k \}.  
 \]
Since $k\ge 3$, $C$ contains every standard basis vector, and every support-three vector in $\bF_q^r$, as desired. \qed

\section{Proof of the key lemma: Lemma~\ref{lem:key}}\lab{sec:key}

Recall that $B=(A'')^{-1}$. As before, let $U$ denote the set of columns in $A''$ and let $U_0\subseteq U$ denote the set of columns in $\core{(A^1)}$. Let $b_1,\ldots, b_{n'}$ be the row vectors of $B$, where $n'=|U|$. Recall also that $n_1=|U_0|$ which is the number of columns in $\core{(A^1)}$, and that $n'=(1+O(\eps_1))n_1=\Theta(n)$.

For each $1\le i\le n'$, let $b_i=(b_{ij})_{1\le j\le n'}$ denote the $i$-th row vector. Let $a_1,\ldots, a_{n'}$ denote the rows of $A''$. Since $BA''=I$, we know that $b_i A'' = e_i$, where $e_i$ is the $i$-th standard basis row vector. In other words,
\[
\sum_{j=1}^{n'} b_{ij} a_j = e_i,\quad \mbox{for every $1\le i\le n'$}.
\]
It is more convenient to index the right hand side above by the columns of $A''$, and index entries of $B$ by rows and columns of $A''$ accordingly. That is, we may equivalently write
\begin{equation}\lab{row-b-index}
\sum_{j\in \Row(A'')} b_{ij} a_j= e_i,\quad \mbox{for every $i\in U$}.
\end{equation}
It is helpful to recall that $A''$ contains all but $O(\eps_1n)$ rows of $\Row(\core{(A^1)})$ by Lemma~\ref{lem:rows}.

For each row vector $a_i$, the components of $a_i$ are indexed by columns in $U$. As $U_0\subseteq U$, let $\bar a_i$ be the row vector obtained from $a_i$ by dropping the components that are not in $U_0$, for every $i\in \Row(\core{(A^1)})\supseteq \Row(A'')$. It is much easier to analyse row vectors $\bar a_i$ than $a_i$ since we know little about the distribution of the column vectors in $U\setminus U_0$. In particular, they are not independent copies of $\bfv$. By considering  $b_i \left[\substack{ \vdots \\ \bar a_j \\ \vdots}\right]$ for $i\in U_0$, it follows now that
\[
\sum_{j\in\Row(A'')} b_{ij} \bar a_j = e_i,\quad \mbox{for every $i\in U_0$}.
\]
Let $J\subseteq [r]$ and $\alpha\in \bF^J$. It follows then that
\[
 \sum_{j\in\Row(A'')}  \Big(\sum_{i\in J}  \alpha_i b_{i} \Big)_j \bar a_j = \sum_{i\in J} \alpha_i e_i.
\]
The right hand side above is a row vector that is everywhere zero except for entries in $J$. That is, 
\begin{equation}\lab{sum}
\sum_{j\in\Row(A'')}  \Big(\sum_{i\in J}  \alpha_i b_{i} \Big)_j \bar a_{jh} = 0,\quad \mbox{for all $h\in U_0\setminus J$}.
\end{equation}
Let $U_0=\{v_1,\ldots, v_{n_1}\}$ denote the set of columns in $U_0$. Let $\supp{v_h}|_{A''}$ be the set of nonzero entries in column $v_h$ of $A''$ (i.e.\ the set of $j\in\Row(A'')$ such that $\bar a_{jh}\neq 0$). Then, to satisfy~\eqn{sum}, it is necessary that
for every $h\in U_0\setminus J$, the set
\begin{equation}\lab{ZJ}
\Z^{\alpha,J}_h:=\left\{j\in \supp{v_h}|_{A''}, \Big(\sum_{i\in J}  \alpha_i b_{i} \Big)_j\neq 0\right\} 
\end{equation}
is either empty or has cardinality at least two.
 
\begin{definition}
 Let $H=H[\core{(A^1)}]$ be the hypergraph where $V(H)=\{v_1,\ldots,v_{n_1}\}$ is the set of columns of $\core{(A^1)}$, and $E(H)$ is the set of subsets $\{j\in V(H): \bar a_{ij}\neq 0\}$, for each $i\in \Row(\core{(A^1)})$. The size of the edge $\{j: \bar a_{ij}\neq 0\}$ is the cardinality of this set. An edge is called a $j$-edge if its size is $j$.
\end{definition}

Recall from~\eqn{row-b-index} that the rows of $B$ are indexed by columns in $U\supseteq U_0$.
Given any $i\in U_0$, let $H^{\alpha}_i$ denote the subgraph of $H$ obtained by including edges 
\[
\{\bar a_j: \ \alpha_ib_{ij}\neq 0\},
\]
and then deleting all the isolated vertices.
Analogously, given a subset $J\subseteq U_0$, let $H^{\alpha}_{J}$ be the subgraph obtained by including all edges $\bar a_j$ where $\sum_{i\in J} \alpha_ib_{ij}\neq 0$, and then deleting all the isolated vertices. Obviously if $\Z^{\alpha,J}_h$ has cardinality zero or at least two for every $h\in U_0\setminus J$ then all vertices in $V(H^{\alpha}_J)\setminus J$ must be incident to at least two edges in $H^{\alpha}_J$ (as there are no isolated vertices in $H^{\alpha}_J$).

Consequently, to confirm Lemma~\ref{lem:key}, it is then sufficient to prove that a.a.s.\
\begin{equation}
\mbox{for all nonzero $\alpha\in \bF^r$:}\ H^{\alpha}_{J}\ \mbox{has at least $\delta n$ edges for every $J\subseteq [r]$} \label{HJ}
\end{equation}
%Since $r$ is fixed, applying the union bound for every $J\subseteq [r]$ yields Lemma~\ref{lem:key}.

We prove~\eqn{HJ} by studying subgraphs of $H$ where all but at most $|J|\le r$ vertices have degree at least two.

Given a positive integer $h$, let ${\mathscr Y}^{\delta n}_h$ denote the set of subgraphs $S$ of $H$ such that 
\begin{itemize}
\item $|E(S)|\le \delta n$; and
\item exactly $h$ vertices of $S$ have degree one; and 
\item all the other vertices in $S$ have degree at least two. 
\end{itemize}
We can confirm~\eqn{HJ} if $\cup_{h=1}^{r}{\mathscr Y}^{\delta n}_{h}$ is a.a.s.\ empty. However, this is far from being true. Indeed, for every fixed $h\ge 2$ we have $|{\mathscr Y}^{\delta n}_h|=\Omega(n)$. To  see this, consider $h=2$. Clearly, any subgraph $S$ composed of a single 2-edge is a member of ${\mathscr Y}^{\delta n}_2$ and it is easy to prove that a.a.s.\ there are $\Omega(n)$ $2$-edges in $H$. Analogously, for any $h\ge 3$, we immediately find that $|{\mathscr Y}^{\delta n}_h|=\Omega(n^{\floor{h/2}})$ by only considering members of ${\mathscr Y}^{\delta n}_h$ that are some sort of pseudo-forest (see Definition~\ref{def:forest}). However, since randomly permuting vertices in $H$ does not change its distribution, we may regard $[r]$ as a random subset of  $r$ vertices of $H$. Provided that 
\begin{equation}
|{\mathscr Y}^{\delta n}_{h}|=o(n^{h}), \quad \mbox{for all $1\le h\le r$,} \lab{sparse}
\end{equation} 
the probability that the set of leaf-vertices (vertices with degree one) in any member of ${\mathscr Y}^{\delta n}_{h}$ is a subset of $[r]$ is $o(1)$. Thus it suffices to confirm~\eqn{sparse}, as~\eqn{HJ} follows by taking union bound over $1\le h\le r$. 

It is sometimes convenient to represent $H$ by a bipartite graph that is called the Tanner graph of $H$, defined below.
\begin{definition}
Given a hypergraph $G$, the Tanner graph of $G$, denoted by ${\mathcal T}_G$, is the bipartite graph on $V(G)\cup E(G)$, where $ux\in V(G)\times E(G)$ is an edge in  ${\mathcal T}_G$ if $u\in x$ in $G$. We call elements in $V(G)$ the vertex-nodes of ${\mathcal T}_G$, and elements in $E(G)$ the edge-nodes of ${\mathcal T}_G$.
\end{definition}

We start from the distribution of $H$ and the size of the edges in $H$. We first define a few parameters and a technical lemma. Let 
\[
f(x)=\frac{x(e^x-1)}{e^x-1-x}.
\]
\begin{lemma}\lab{lem:f}
$f(x)$ is an increasing function on $x>0$, and $\lim_{x\to 0+} f(x)=2$.
\end{lemma}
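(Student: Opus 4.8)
\textbf{Plan of proof for Lemma~\ref{lem:f}.}

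The plan is to verify both claims by elementary calculus after a convenient rewriting of $f$. First I would rewrite
\[
f(x)=\frac{x(e^x-1)}{e^x-1-x}=x\cdot\frac{1}{1-\frac{x}{e^x-1}},
\]
so that the behaviour of $f$ is controlled entirely by the single auxiliary function $g(x)=\frac{x}{e^x-1}$, which is the well-studied generating function of the Bernoulli numbers. The key facts I would invoke or prove about $g$ on $(0,\infty)$ are: $g$ is positive, strictly decreasing, $\lim_{x\to0^+}g(x)=1$, and $\lim_{x\to\infty}g(x)=0$. With these in hand, $1-g(x)$ is positive and strictly increasing from $0$ to $1$, and since $f(x)=x/(1-g(x))$ I would then show $f$ is a product/quotient of positive increasing-type pieces in a way that forces monotonicity; more carefully, I expect it is cleanest to differentiate $f$ directly rather than argue from $g$ alone, because $x$ itself is increasing but $1/(1-g(x))$ is decreasing, so the product is not immediately monotone by inspection.

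Concretely, for the monotonicity claim I would compute $f'(x)$ and show the numerator is positive for all $x>0$. Writing $u(x)=x(e^x-1)$ and $w(x)=e^x-1-x$, we have $f=u/w$ and
\[
f'(x)=\frac{u'(x)w(x)-u(x)w'(x)}{w(x)^2},
\]
with $w(x)>0$ for $x>0$. Here $u'(x)=e^x-1+xe^x$ and $w'(x)=e^x-1$, so the sign of $f'$ is the sign of
\[
\Phi(x):=(e^x-1+xe^x)(e^x-1-x)-x(e^x-1)^2.
\]
I would expand $\Phi$, collect terms, and show $\Phi(x)>0$ for $x>0$ — most likely by checking $\Phi(0)=0$, $\Phi'(0)=0$ (and possibly one more derivative vanishing at $0$), and then that the lowest surviving derivative of $\Phi$ at $0$ is positive, together with an argument (e.g. expressing $\Phi$ via the power series of $e^x$ with manifestly nonnegative coefficients after cancellation) that $\Phi$ stays positive. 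This Taylor/series-positivity route is the one I would commit to, since for expressions built from $e^x$, $1$, and monomials it is usually the most robust: after the algebra, the coefficient of $x^n$ in $\Phi$ should come out nonnegative for every $n$.

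For the limit claim, I would use the Taylor expansions $e^x-1=x+\tfrac{x^2}{2}+\tfrac{x^3}{6}+O(x^4)$ and $e^x-1-x=\tfrac{x^2}{2}+\tfrac{x^3}{6}+O(x^4)$, so that
\[
f(x)=\frac{x\left(x+\tfrac{x^2}{2}+O(x^3)\right)}{\tfrac{x^2}{2}+\tfrac{x^3}{6}+O(x^4)}
=\frac{x^2\left(1+\tfrac{x}{2}+O(x^2)\right)}{\tfrac{x^2}{2}\left(1+\tfrac{x}{3}+O(x^2)\right)}
=2\cdot\frac{1+\tfrac{x}{2}+O(x^2)}{1+\tfrac{x}{3}+O(x^2)}\longrightarrow 2
\]
as $x\to0^+$. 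I expect the main (and really the only) obstacle to be the algebraic bookkeeping in establishing $\Phi(x)>0$: the expression has several cancelling leading terms, and one must be careful to identify the first nonvanishing order and to argue positivity globally rather than just near $0$. If the direct series-coefficient argument turns out messy, a fallback is to prove $f$ is increasing by showing instead that $h(x):=\log f(x)=\log x+\log(e^x-1)-\log(e^x-1-x)$ has $h'(x)=\tfrac1x+\tfrac{e^x}{e^x-1}-\tfrac{e^x-1}{e^x-1-x}>0$, clearing denominators to reduce to the same polynomial-in-$e^x$ positivity statement, which again succumbs to a power-series comparison.
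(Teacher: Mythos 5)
Your proposal is correct and takes essentially the same route as the paper: both differentiate $f$ and reduce monotonicity to the positivity of the same quantity $\Phi(x)=(e^x-1+xe^x)(e^x-1-x)-x(e^x-1)^2=e^{2x}-(2+x^2)e^x+1$, and both obtain the limit $2$ from the Taylor expansion of $e^x$. The series-coefficient argument you commit to for $\Phi(x)>0$ does go through (the coefficient of $x^n$ is $\bigl(2^n-2-n(n-1)\bigr)/n!$, which vanishes for $n\le 3$ and is positive for all $n\ge 4$); the paper instead verifies the same inequality via a completing-the-square identity together with a truncated expansion of $e^x$, which is an interchangeable detail.
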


\proof By considering $f'(x)$, it is sufficient to prove that $e^{2x}-(2+x^2)e^x+1>0$ for every $x>0$, which follows by
\begin{align*}
e^{2x}-(2+x^2)e^x+1&=(e^x-1-x-x^2/2)^2+1+2xe^x-(1+x+x^2/2)^2\\
&> (e^x-1-x-x^2/2)^2+1+2x(1+x+x^2/2+x^3/6)-(1+x+x^2/2)^2\\
&=(e^x-1-x-x^2/2)^2+x^4/12>0.
\end{align*}
The assertion that $\lim_{x\to 0+} f(x)=2$ follows by taking the Tayler expansion of $e^x=1+x+x^2/2+O(x^3)$.\qed \ss

Given $c>2$,  let $\mu(c)$ be the unique positive root of
\[
\frac{\sum_{j\ge 2} j e^{-\mu} \mu^j/j! }{\sum_{j\ge 2}  e^{-\mu} \mu^j/j! }=f(\mu)=c.
\]
Note that the existence and uniqueness of $\mu(c)$ is guaranteed by Lemma~\ref{lem:f}.
Further, define $\beta=\beta_k$ where
\[
\beta_k=\dfrac{e^{-\mu}\mu^2/2}{\sum_{j\ge 2}  e^{-\mu} \mu^j/j!},  \quad \mbox{where $\mu=\mu(k)$.}
\]
Recall that $n_1=|V(H)|=\Theta(n)$.

\begin{lemma}\lab{lem:Hproperties}
\begin{enumerate}[(a)]
\item The number of 2-edges in $H$ is a.a.s.\ $(\beta_k+O(\eps_1))n_1$ where $\beta_k<0.45$ for every $k\ge 3$. 
\item Conditional on the number of edge-nodes in ${\mathcal T}_H$, and the degrees of all nodes in ${\mathcal T}_H$, ${\mathcal T}_H$ is uniformly distributed over all bipartite graphs with the given number of nodes and degrees of the nodes.
\item The maximum degree of ${\mathcal T}_H$ is $O(\log n)$.
\end{enumerate}
\end{lemma}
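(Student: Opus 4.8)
\textbf{Proof proposal.} The plan is to first translate $H$ into the language of the $2$-core of a random $k$-uniform hypergraph, and then read off all three parts from the known structure of that core. Recall $A^1\sim A_{m_1}$, whose columns are $m_1$ independent copies of $\bfv$; regarding the rows $[n]$ as vertices and each column (with support a uniform $k$-subset of $[n]$) as a hyperedge, $A_{m_1}$ is exactly the incidence structure of the random $k$-uniform hypergraph $\G_k(n,m_1)$, which is moreover a.a.s.\ simple since $\binom{m_1}{2}/\binom{n}{k}=O(n^{2-k})=o(1)$ for $k\ge 3$. Under this dictionary the matrix $2$-core $\core{(A^1)}$ of Definition~\ref{def:2core} is precisely the hypergraph $2$-core of $\G_k(n,m_1)$: its columns are the surviving hyperedges, each still of size $k$ (a hyperedge is deleted only together with an incident vertex), and its rows are the surviving vertices, each of degree $\ge 2$. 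Hence $H=H[\core{(A^1)}]$ is the dual hypergraph of this core: $V(H)$ is the set of core-hyperedges, $E(H)$ the set of core-vertices, and the Tanner graph ${\mathcal T}_H$ is the vertex--hyperedge incidence graph of the core, with every vertex-node of degree exactly $k$ and every edge-node of degree equal to its degree in the core. I would carry out all estimates at density $d:=km_1/n=d_k-\Theta(\eps_1)$, just below the critical $d_k$ at which, by Remark~\ref{remark:k} and Lemma~\ref{lem:rows}, the core has equally many vertices and hyperedges; for our $d$, Lemma~\ref{lem:rows} gives that the number of core-vertices exceeds the number $n_1$ of core-hyperedges by a factor $1+\Theta(\eps_1)$.

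For part (b) I would invoke the standard fact that the $2$-core of a random hypergraph is, conditionally on its numbers of vertices and hyperedges and on its degree sequence, uniformly distributed among all hypergraphs with that data; this is part of the Poisson-cloning analysis of Kim~\cite{kim2007poisson} and of~\cite[Theorem 1.6]{ayre2020satisfiability}, the source already used for Lemma~\ref{lem:rows}. The reason, which I would sketch: generate $\G_k(n,m_1)$ by a configuration (pairing) model---$k$ half-edges on each column-node, an i.i.d.\ Poisson-type number on each row-node, matched uniformly---and observe that the $2$-core stripping process decides which nodes to remove using only information outside the core, so the restriction of the uniform pairing to the half-edges incident with the surviving nodes is again uniform; projecting to the simple bipartite graph (routine, as there are $O(1)$ multi-edges) gives the claim for ${\mathcal T}_H$.

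For part (a), the number of $2$-edges of $H$ equals the number of degree-$2$ vertices of the core, which is a deterministic function of the core's degree sequence. By the core structure above, a.a.s.\ the empirical degree distribution of the core-vertices is $(1+o(1))$ times the law of $Z\sim\Po(\lambda)$ conditioned on $Z\ge 2$, with $\lambda$ fixed by matching total degree: this total is $k\cdot(\#\text{core-hyperedges})=kn_1$, spread over $(1+\Theta(\eps_1))n_1$ core-vertices, so the mean degree is $k/(1+\Theta(\eps_1))=k-\Theta(\eps_1)$, whence $\lambda=\mu(k)+O(\eps_1)$ via $f(\mu)=c$ as in Lemma~\ref{lem:f}. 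Thus a.a.s.\ the number of $2$-edges of $H$ is $(1+\Theta(\eps_1))n_1\cdot(\beta_k+O(\eps_1))=(\beta_k+O(\eps_1))n_1$. For the bound $\beta_k<0.45$, set $g(\mu):=\tfrac{\mu^2/2}{e^{\mu}-1-\mu}=\big(\sum_{j\ge 2}\tfrac{2}{j!}\mu^{j-2}\big)^{-1}$, so $\beta_k=g(\mu(k))$; the series shows $g$ is strictly decreasing on $(0,\infty)$, while $\mu(\cdot)$ is strictly increasing by Lemma~\ref{lem:f}, so $\beta_k$ is decreasing in $k$ and it suffices to bound $\beta_3$. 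A direct evaluation gives $f(2.1)<3$, hence $\mu(3)>2.1$, and therefore $\beta_3=g(\mu(3))<g(2.1)<0.45$.

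For part (c), every vertex-node of ${\mathcal T}_H$ has degree exactly $k=O(1)$, so it remains to bound the edge-node degrees, which are at most the degrees of the corresponding vertices in $\G_k(n,m_1)$; a fixed vertex of $\G_k(n,m_1)$ has degree $\bin(m_1,k/n)$ with constant mean $km_1/n=\Theta(1)$, so a Chernoff bound and a union bound over the $n$ vertices show all such degrees are a.a.s.\ $O(\log n/\log\log n)=O(\log n)$. I expect the only nonroutine ingredient to be the precise structure of the near-critical $2$-core used in (a) and (b)---the degree law and the conditional uniformity---for which the cleanest route is to quote the Poisson-cloning / stripping results already underlying Lemma~\ref{lem:rows} rather than re-derive them; the concentration estimates and the elementary inequality $\beta_k<0.45$ are then straightforward.
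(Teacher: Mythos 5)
Your proposal is correct and, for parts (a) and (c), follows essentially the same route as the paper: part (a) is exactly the paper's argument (dualize to the $2$-core of a random $k$-uniform hypergraph, quote the truncated-Poisson degree law from the stripping/Poisson-cloning literature, use monotonicity of $\mu(\cdot)$ and of $\mu\mapsto\frac{\mu^2/2}{e^{\mu}-1-\mu}$ to reduce to $k=3$), except that where the paper simply evaluates $\mu_3\approx 2.1491$ and $\beta_3\approx 0.4254$ numerically, you derive the rigorous bound $\mu(3)>2.1$ from $f(2.1)<3$ and then $g(2.1)<0.45$ --- a small improvement in rigour; and part (c), whose proof the paper omits as ``well known,'' you supply with the standard binomial tail plus union bound, which is fine. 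The one genuinely different piece is part (b): you argue via the configuration/pairing model and a deferred-decisions claim that the stripping process leaves the pairing on the core uniform, whereas the paper gives a more elementary exchangeability argument directly on the matrix $A^1$ (for any two admissible bipartite graphs $G_1,G_2$ with the same node counts and degrees, swapping $G_1$ for $G_2$ inside a realisation of $A^1$ is a probability-preserving bijection, so the two Tanner graphs are equiprobable). The paper's route avoids having to justify that the stripping process ``uses only information outside the core,'' which is standard but requires some care to state precisely; your route has the advantage of connecting directly to the Poisson-cloning machinery you already invoke for (a). Both are valid.
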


\proof By reversing the vertices and edges in $H$, the 2-edges in $H$ correspond to the vertices of degree two in the 2-core of a random $k$-uniform hypergraph where the ratio of number of vertices and the number of $k$-edges in the 2-core is $1+O(\eps_1)$. The 2-core of such a random hypergraph is well studied, and it is known that the distribution of the proportion of vertices with degree two is asymptotically truncated Poisson with parameter $\mu=(1+O(\eps_1)) \mu(k)$ where $\mu(k)$ is the root of $f(\mu)=k$. We refer the reader to~\cite[Theorem 3.3]{cain2006encores} for a proof of this assertion. Since $\mu_3 \approx 2.1491258$. It follows from Lemma~\ref{lem:f} that $\mu_k>2$ for every $k\ge 3$. The derivative of $x^2/2(e^x-1-x)$ is negative for all $x>2$. Hence, $\beta_k\le \beta_3\approx .4254370997$. This confirms part (a).
Part (b) is obvious: consider any two bipartite graphs $G_1$ and $G_2$ in the support of the probability space in part (b). Suppose $A$ is a matrix in the support of the probability space of $A^1$ such that the Tanner graph of $H[\core{A}]$ is $G_1$, then replacing $G_1$ by $G_2$ yields another matrix $B$ such that the Tanner graph of $H[\core{B}]$ is $G_2$. Since $A$ and $B$ appear with equal probability as being $A^1$, $G_1$ and $G_2$ must appear with equal probability.    Part (c) follows from well known results in the literature on the degrees of vertices of a random $k$-uniform hypergraph, and we skip its proof. \qed
\smallskip

We use the configuration model, introduced by Bollob\'{a}s~\cite{bollobas1980probabilistic}, to analyse $H$ conditioning the set of vertex-nodes, the edge-nodes, and the degree sequence. Represent each node $u$ by a bin containing $d_u$ points where $d_u$ is the degree of $u$.  Uniformly at random match points in the bins representing edge-nodes to points in the bins representing vertex-nodes. By contracting each bin to a node, and each matched pair of points as an edge, the matching yields a bipartite multigraph with  the given degree sequence. Conditional on the resulting bipartite graph being simple, the configuration model generates a bipartite graph with the same distribution as $H$. A simple counting argument as in~\cite{bollobas1980probabilistic} shows that the probability that the configuration model generates a simple bipartite graph in our setting is $\Omega(1)$. Thus, any property that is a.a.s.\ true in the configuration model is a.a.s.\ true for $H$. For simplicity, when we use the configuration model, we still call bins the vertex-nodes and edge-nodes.

We pay particular attention to the 2-edges in $H$.
For convenience, colour all  2-edges of $H$ red and let $\red{H}$ denote the subgraph of $H$ induced by the red edges. Note that~\eqn{sparse} cannot be true if $\red{H}$ has a linear component with $o(n)$ diameter. Suppose it does. Then the largest component of $\red{H}$ has a spanning tree $T$ consisting of $\Omega(n)$ vertices. For every $u,v$ in $T$, the $(u,v)$-path in $T$ is a subgraph of $H$, which is also a member of ${\mathscr Y}^{\delta n}_{2}$. It would immediately imply that $|{\mathscr Y}^{\delta n}_{2}|=\Omega(n^2)$, contradicting~\eqn{sparse}. 
Our next target is to prove that $\red{H}$ does not contain any large component. 
\begin{lemma}\label{lem:smallRedGraph} 
A.a.s\ every component of $\T_{\red{H}}$ has order $O(\log n)$. % and is either a tree of unicyclic. 
%\begin{enumerate}[(a)]
%\item A.a.s\ every component of $\red{H}$ has order $O(\log n)$, and is either a tree of unicyclic. 
%\item There are $O(1)$ unicyclic components in $\red{H}$ in expectation.
%\end{enumerate}
\end{lemma}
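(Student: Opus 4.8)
The plan is to prove Lemma~\ref{lem:smallRedGraph} via a first-moment computation in the configuration model, using Lemma~\ref{lem:Hproperties} to supply the needed structural facts: the number of red (2-)edges is a.a.s.\ $(\beta_k+O(\eps_1))n_1$ with $\beta_k<0.45$, the Tanner graph is uniform given the degree sequence, and the maximum degree is $O(\log n)$. The key point is that $\red{H}$ is (essentially) a random graph obtained by placing roughly $\beta_k n_1$ random 2-edges on the $n_1$ vertex-nodes, where each vertex-node $v_h$ is allowed to receive at most $\deg_{{\mathcal T}_H}(v_h) = O(\log n)$ incidences. Because $\beta_k < \tfrac12$ (indeed $<0.45$), the expected number of red edges incident to a given vertex is strictly below $1$, so $\red{H}$ is subcritical and should have only logarithmic components, exactly as in the classical $G(n,c/n)$ with $c<1$.

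Concretely, I would work in the configuration model. Condition on the degree sequence of ${\mathcal T}_H$ and on the set $\R$ of red edge-nodes, with $|\R| = (\beta_k + O(\eps_1))n_1$; since $\beta_k + O(\eps_1) < \tfrac12 - \Omega(1)$ for $\eps_1$ small, the ratio $2|\R|/n_1$ is bounded away from $1$. First I would bound the probability that a \emph{fixed} tree-like structure on $\ell$ vertex-nodes arises as a connected subgraph of $\red{H}$: such a structure uses $\ell - 1$ red edge-nodes, and for each of its $2(\ell-1)$ Tanner-edges the probability (in the configuration model) that a designated point of a red edge-node is matched to a designated point of the relevant vertex-node is $O(1/n_1)$, since there are $\Theta(n_1)$ free points on the vertex side. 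Multiplying, the probability that a specific such subgraph appears is $O(n_1^{-2(\ell-1)})$ up to a factor accounting for the $\le \binom{D}{1}$-many point choices at each vertex-node, where $D = O(\log n)$ is the max degree. Summing over the choice of $\ell$ red edge-nodes ($\binom{|\R|}{\ell-1} = O(n_1^{\ell-1})$ ways), the choice of which vertex-nodes they touch and in what tree shape (at most $(2\ell)^{2\ell}$ shapes, and for each edge-node only $O(1)$ new vertex-nodes to pick among $n_1$, contributing $O(n_1)$ per new vertex, i.e.\ $O(n_1^{\ell})$ total vertex choices), and the point choices ($D^{O(\ell)} = (\log n)^{O(\ell)}$), one gets expected count of such $\ell$-vertex tree subgraphs at most
\[
n_1^{\ell-1}\cdot n_1^{\ell}\cdot (\log n)^{O(\ell)}\cdot (2\ell)^{2\ell}\cdot O(n_1)^{-2(\ell-1)} \;=\; n_1 \cdot \big(c\,(\log n)^{O(1)}\ell^{O(1)}/n_1\big)^{\ell-1}
\]
for an absolute constant $c$. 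Wait — this crude bound still carries a stray factor and must be tightened; the right way is to track the Tanner structure of a connected red subgraph precisely: a connected red subgraph with $\ell$ vertex-nodes and $s$ red edge-nodes has $2s$ Tanner edges and, being connected, satisfies $2s \ge \ell + s - 1$, i.e.\ $s \ge \ell - 1$, with equality iff it is a tree. For a tree ($s = \ell-1$), the number of ways to choose the red edge-nodes and lay them out is $O(n_1^{\ell-1}) \cdot (\text{Cayley-type factor}) \le n_1^{\ell-1} \ell^{O(\ell)}$, the number of ways to choose which vertex-nodes, respecting adjacency, is $\le n_1^{\ell}/\ell!\cdot (\text{arrangements})$, and the configuration-model probability is $\prod$ of $2(\ell-1)$ factors each $O(1/n_1)$. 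The product telescopes to expected number $O\!\big(n_1 \cdot (C(\beta_k)\, )^{\ell}\big)$ where $C(\beta_k) < 1$ precisely because $2\beta_k < 1$; the $(\log n)$ factors from bounded-degree choices and the $\ell^{O(\ell)}$ from tree-counting are absorbed once $\ell = \Omega(\log n / \log\log n)$, say $\ell = K\log n$ for large constant $K$. Summing this geometric series over $\ell \ge K\log n$ gives $o(1)$, so a.a.s.\ no red component has more than $K\log n$ vertex-nodes; since each red edge-node has degree exactly $2$, the corresponding bound on $|{\mathcal T}_{\red{H}}|$-components follows (a component with $\ell$ vertex-nodes has at most $\ell$ edge-nodes). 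For the non-tree case $s \ge \ell$, the extra factor $O(1/n_1)$ per surplus edge makes the expected count even smaller, so those contribute nothing.

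The main obstacle — and the place where care is genuinely needed — is making the union bound over \emph{connected} subgraphs with one leaf-free degree condition honest: the number of connected Tanner-subgraphs on a given vertex/edge-node set is large (super-exponential in $\ell$ because of the tree-counting factor $\ell^{\ell}$), and one must ensure it is beaten by the product of $O(1/n_1)$ probabilities and the $n_1^{\ell}$ vertex-choice factor so that the net base of the geometric series is $2\beta_k(1+o(1)) < 1$. This is exactly the standard subcritical-branching estimate, and the bounded-degree hypothesis (Lemma~\ref{lem:Hproperties}(c)) is what lets us replace "choose a point in a vertex-node bin" ($D = O(\log n)$ choices) by a cost that is absorbed in the $(\log n)^{O(\ell)} = n_1^{o(\ell)}$ slack once $\ell$ is logarithmic. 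A clean alternative, which I would actually prefer to write up, is to avoid the subgraph union bound entirely: expose $\red{H}$ by a breadth-first search / branching-process comparison in the configuration model, where from each newly reached vertex-node the number of further red edge-nodes it is matched to is stochastically dominated by a binomial with mean $\le 2|\R|/n_1 + o(1) = 2\beta_k + o(1) < 1$ (using that throughout the first $O(\log n)$ steps only $O(\log^2 n)$ points have been used, so the conditional matching probabilities are undisturbed up to a $1+o(1)$ factor); a standard subcritical branching-process tail bound then gives that the component of any fixed vertex has size $O(\log n)$ with probability $1 - n^{-\omega(1)}$, and a union bound over the $n_1$ vertex-nodes finishes it. Either route yields the lemma; the branching-process route is the shorter and is the one I would present.
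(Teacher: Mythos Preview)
Your preferred route---the branching-process comparison in the configuration model---is correct and is exactly the paper's proof: from a newly reached vertex-node the expected number of further red incidences is $(k-1)\cdot\tfrac{2\beta_k}{k}<2\beta_k<1$, so a standard subcritical coupling yields components of size $O(\log n)$. Two small corrections that do not affect the argument: your stated offspring mean $2|\R|/n_1=2\beta_k$ is only an upper bound (the exact value carries the extra factor $(k-1)/k$, which is what the paper writes), and the tail probability for a fixed starting vertex is $n^{-\Omega(1)}$ with the implicit constant depending on the constant in $O(\log n)$, not $n^{-\omega(1)}$; this still suffices for the union bound over $n_1$ vertices once that constant is chosen large enough, so you can drop the lengthy first-moment detour entirely.
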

\proof $\T_{\red{H}}$ has $n_1$ vertex-nodes and $(\beta_k+O(\eps_1))n_1$ edge-nodes by Lemma~\ref{lem:Hproperties}. Consider the configuration model: every point in the set of degree-two edge-nodes are uniformly matched to one of the  $kn_1$ points in the vertex-nodes. Colour the points that are contained in degree-two edge-nodes red. Starting from any red point $p$, let $v$ be the vertex that contains the point that $p$ is matched to. For each of the remaining $k-1$ points $p'$ in $v$ (other than the one matched to $p$), the probability that it is matched to a red point is 
\[
\rho\sim \frac{2\beta_k}{k}+O(\eps_1).
\]
Hence, the expected number of red points by this 1-step branching process starting from $p$ is $(k-1)\rho<2\beta_k<1$ by Lemma~\ref{lem:Hproperties}(a). By a standard coupling argument comparing with a full branching process whose expected number of children in each step is less than one, the claim of the lemma follows immediately.  We leave the details of the proof as a simple exercise. \qed

\begin{definition}\lab{def:forest}
A hypergraph $G$ is called a pseudo-forest if all vertices have degree at least one, and ${\mathcal T}_G$ is a forest (i.e.\ ${\mathcal T}_G$ is acyclic). A vertex in a pseudo-forest is called a leaf if its degree is equal to one.
\end{definition}

We are ready to count ${\mathscr Y}^{\delta n}_h$. It is easy to bound the number of members in ${\mathscr Y}^{\delta n}_h$ that are pseudo-forests. Note that every pseudo-forest must have at least two leaves.
\begin{lemma}\label{lem:forests}
Let $h\ge 2$. A.a.s.\ the number of subgraphs of $H$ that are  pseudo-forests with at most $h$ leaves is $O((n\log^{2h} n)^{h/2})$. 
\end{lemma}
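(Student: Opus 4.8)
The plan is to exploit that $H$ is sparse in a strong sense: every vertex of $H$ has degree exactly $k$ (each column of $\core{(A^1)}$ has exactly $k$ nonzero entries), edge sizes are only $O(\log n)$ by Lemma~\ref{lem:Hproperties}(c), and — crucially — the subhypergraph $\red{H}$ of $2$-edges breaks into components of order $O(\log n)$ by Lemma~\ref{lem:smallRedGraph}. I will also use, and if necessary establish by a routine first-moment computation paralleling the subcritical branching process in the proof of Lemma~\ref{lem:smallRedGraph}, that a.a.s.\ every component of $\red{H}$ has cyclomatic number $O(1)$, so that a component $C$ of $\red{H}$ has at most $|C|^{O(L)}$ subforests with at most $L$ leaves.

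First I would record the structural constraints forced on a pseudo-forest $G\subseteq H$ with at most $h$ leaves. Every component of $G$ is a pseudo-tree with at least two leaves, so $G$ has at most $\lfloor h/2\rfloor$ components. In the forest $\mathcal{T}_G$ the ``excess degree'' $\sum_{\deg\ge 3}(\deg-2)$ is at most $h-2$; hence every edge-node of $\mathcal{T}_G$ of degree $\ge 3$ — equivalently every edge of $G$ of size $\ge 3$, which I call a \emph{big edge} — has size at most $h$, there are at most $h-2$ big edges, and the total number of incidences on big edges is $O(h)$. Deleting the big edge-nodes from $\mathcal{T}_G$ leaves a forest whose edge-nodes all have size $2$; its pieces containing at least one edge (``blobs'') are subforests of $\red{H}$, and each blob lies in a single component of $\red{H}$. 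Moreover a big edge meets each blob in at most one vertex, for otherwise the two-step path through that big edge together with a path inside the blob would close a cycle in $\mathcal{T}_G$. Counting pieces after removing at most $h-2$ nodes of total degree $O(h)$ from a forest, the number of blobs is $O(h)$, and the number of leaves summed over all blobs is $O(h)$.

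The count then proceeds as follows. There are only $O_h(1)$ ``topological types'' of such a $G$ (recording its components, its big edges, its degree-$\ge 3$ vertex-nodes, and how the arcs between them are attached); fix one. For each of the $c\le\lfloor h/2\rfloor$ components choose an \emph{anchor}, namely one of its leaf-vertices: $n_1^{\,c}$ ways. Now reconstruct each component by a deterministic exploration from its anchor: at a vertex $w$ already reached, its $\le k$ incident edges of $H$ are determined, so deciding which of them belong to $G$ costs $O(1)$; each time the exploration crosses a big edge it is one of the $\le k$ edges at the current vertex and its $\le h$ vertices are then revealed, costing $O(1)$; and each time the exploration enters a blob, that blob is a subforest with $O(h)$ leaves of the component of $\red{H}$ containing the entry vertex, which is already determined, so by the excess bound it costs at most $|C|^{O(h)}=(\log n)^{O(h)}$, and since there are $O(h)$ blobs with $O(h)$ leaves in total this contributes $(\log n)^{O(h)}$ overall. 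Multiplying, the number of pseudo-forests with at most $h$ leaves and exactly $c$ components is $O_h\!\big(n_1^{\,c}(\log n)^{O(h)}\big)$; summing over $1\le c\le\lfloor h/2\rfloor$ and using $n_1=O(n)$ gives the claimed bound $O\!\big((n\log^{2h}n)^{h/2}\big)$ (the exponent $2h$ on $\log n$ is a convenient overestimate).

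The main obstacle is that a pseudo-forest may have as many as $\delta n$ edges, so any argument paying per edge, or per vertex, is hopeless; the whole point is the dichotomy above — the few ($O(h)$) big edges are forced to be small ($\le h$) because they must create branching in $\mathcal{T}_G$, while the long runs of $2$-edges are confined to the $O(\log n)$-sized components of $\red{H}$ and can therefore be specified all at once at polylogarithmic cost. The other delicate point is the bounded-excess statement for small components of $\red{H}$: a small component that were a dense graph would have exponentially many subforests, so one must argue, from the same branching-process estimate behind Lemma~\ref{lem:smallRedGraph} plus a first-moment bound on small subgraphs with more edges than vertices, that a.a.s.\ every component of $\red{H}$ of order $O(\log n)$ has $O(1)$ excess; the observation that every big edge has size $\le h$ — again a consequence of the ``at most $h$ leaves'' hypothesis — is what keeps the number of blobs down to $O(h)$ rather than $O(h\log n)$.
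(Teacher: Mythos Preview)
Your approach is essentially the same as the paper's: both decompose a pseudo-tree into its $O(h)$ edges of size $\ge 3$ and the intervening pieces made of $2$-edges, use Lemma~\ref{lem:smallRedGraph} to confine those red pieces to components of order $O(\log n)$, and count by anchoring one vertex per component and exploring. The paper packages this via a ``profile'' $P(T)$ obtained by contracting all $2$-edges, so that each BFS step embeds one big edge together with a single red path leading to it (Claim~\ref{c:profile}), whereas you keep the big edges and the red ``blobs'' separate and pay for each blob as a bounded-leaf subtree of a red component; the two bookkeeping schemes are interchangeable. Your auxiliary claim that components of $\red{H}$ have cyclomatic number $O(1)$ is not stated in the paper, but the paper's assertion that there are only $O(\log n)$ red $(v,u)$-paths in a component of size $O(\log n)$ tacitly uses exactly this (without bounded excess the number of simple paths in such a component is not controlled by its order alone), so you are being explicit about a point the paper leaves implicit rather than taking a different route.
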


\proof  Let $F$ be a subgraph of $H$ which is a pseudo-forest. Then every component of $F$ must have at least two leaves. Thus, $F$ has at most $h/2$ components. We call a pseudo-forest a pseudo-tree if its tanner graph is connected. It is thus sufficient to prove that there are a.a.s.\  $O(n\log^{2h} n)$ pseudo-trees in $H$ with at most $h$ leaves.

Suppose $T$ is a pseudo-tree, we obtain $P(T)$ by contracting every 2-edges in $T$; % and then rooting the smallest vertex remaining in $T$; 
we call $P(T)$ the profile of $T$. See Figure~\ref{fig:tree} below for an illustration. The round nodes are vertex-nodes and the square nodes are edge-nodes. Edge-nodes with degree two are coloured red. After contracting all the 2-edges, i.e.\ contracting all red edge-nodes in the Tanner graph, vertices $v_1$, $v_2$ and $v_3$ are merged to a single vertex, labelled as $v_1$ in the figure on the right hand side. After contraction, all red square nodes disappear. Notice that $P(T)$ has the same number of leaves as $T$. Suppose that $T$ is a pseudo-tree with $h$ leaves. It is easy to see that $P(T)$ has at most $h$ edges, since every edge in $P(T)$ has size at least three, and $P(T)$ has $h$ leaves.  It follows now that for a fixed  $h$, there are at most $O(1)$ possible profiles (i.e.\ pseudo-trees where every edge has size at least three) with $h$ leaves. Fix a profile $P$, we prove that the number of pseudo-trees $T$ in $H$ such that $P(T)=P$ is $O(n\log^ {2h} n)$.  Then summing over all $O(1)$ possible $P$ gives our desired bound on the number of pseudo-trees in $H$.

\begin{figure}
\begin{center}
 \includegraphics[scale=1]{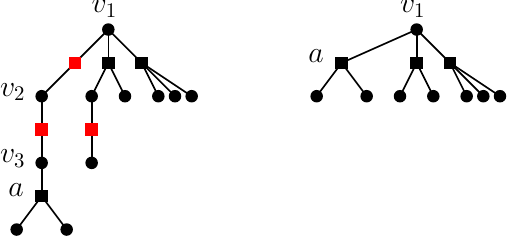}
 \caption{The figure on the left is the Tanner graph of a pseudo-tree $T$; the figure on the right is the Tanner graph of $P(T)$.}
  \label{fig:tree}
 \end{center}
\end{figure}

Pick an arbitrary vertex $u$ in $P$. Obviously there are at most $n_1\le n$ ways to choose $u$  in  $H$. Enumerate all edges $e_1,\ldots, e_{\ell}$, ($\ell\le h$)  in $P$   by a BFS-tree starting from $u$. We embed these edges or their counterparts in $H$ (corresponding to certain paths in $\T_H$ defined below)  one at a time, and count the number of choices for $e_i$, given the embedding of $e_1,\ldots, e_{i-1}$. Suppose the parent of the edge-node $e_i$ in $\T_P$ is vertex-node $v$, and suppose that $e_1,\ldots, e_{i-1}$ has already been embedded into $H$. That means, $v$ is already embedded into $H$. To embed $e_i$ into $H$, we need to find an edge $e$ in $H$ whose size is equal to $e_i$, and a $(v,e)$-path in $\T_H$ such that all the internal edge-nodes are red. For example, take $P$ whose Tanner graph $\T_P$ is given by the right hand side of Figure~\ref{fig:tree}. Given $v_1$ embedded into $H$ already, in order to embed the edge $a$, we need to find a $(v_1,a)$-path in $\T_H$ as shown on the left hand side of Figure~\ref{fig:tree}. We prove that for each $i$, there are $O(\log^2 n)$ ways to embed $e_i$.

\begin{claim}\lab{c:profile}
There are $O(\log^2 n)$ choices for $(e,P)$ in $H$, such that $e$ has the same size as $e_i$, and $P$ is a $(v,e)$-path in $\T_H$ such that all the internal edge-nodes are red. %ways to attach every edge for $P(T)$.
\end{claim}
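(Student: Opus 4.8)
The plan is to bound, separately, the number of choices for the red path-segment emanating from the already-embedded vertex-node $v$, and the number of choices for the terminal edge $e$ of size equal to $e_i$ (which is a fixed constant $\ge 3$). First I would observe that by Lemma~\ref{lem:smallRedGraph}, a.a.s.\ every component of the red subgraph $\T_{\red{H}}$ has order $O(\log n)$. A $(v,e)$-path in $\T_H$ whose internal edge-nodes are all red must, after the first step out of $v$, travel entirely inside one red component until it reaches the final non-red edge-node $e$ (or $e$ itself may be reached directly from $v$, or from a vertex-node inside the red component). Hence the set of vertex-nodes reachable from $v$ along such red paths has size $O(\log n)$: it is contained in $\{v\}$ together with the union of the red components incident to $v$, and by Lemma~\ref{lem:Hproperties}(c) the degree of $v$ in $\T_H$ is $O(\log n)$, so $v$ meets $O(\log n)$ red edge-nodes, each lying in a red component of order $O(\log n)$; the total number of vertex-nodes thus reachable is $O(\log^2 n)$. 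Actually the cleaner bound: $v$ lies in at most $\deg(v)=O(\log n)$ red components (counting multiplicity is unnecessary since a path uses distinct components), each of order $O(\log n)$, giving $O(\log^2 n)$ candidate ``last vertex-nodes'' $w$ from which the non-red edge $e$ is attached — and along a tree (the red component together with the attaching, by acyclicity of $\T_H$ restricted appropriately) the path from $v$ to any such $w$ is essentially unique up to $O(1)$ choices.

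Next, given the last vertex-node $w$ reached through red internal nodes, I would bound the number of choices for the final edge-node $e$: it must be an edge-node incident to $w$ in $\T_H$ of the prescribed size, so there are at most $\deg_{\T_H}(w) = O(\log n)$ of them by Lemma~\ref{lem:Hproperties}(c). Combining: $O(\log^2 n)$ choices for $w$ (though in fact one should be careful that the relevant count is really $O(\log n)$ red edge-nodes at $v$ times $O(\log n)$ vertices per red component, and then the final edge contributes another $O(\log n)$) — so a priori this gives $O(\log^3 n)$. To bring it down to $O(\log^2 n)$ as claimed, I would use acyclicity of the Tanner graph of the pseudo-forest more carefully: the red path from $v$ into a fixed red component is determined (the component is a tree in $\T_H$ conditioned on being a pseudo-forest, or more simply within the configuration model the red subgraph is a.a.s.\ a forest by the subcritical branching argument in Lemma~\ref{lem:smallRedGraph}), so once the red component and the target vertex $w$ in it are fixed, the path is unique. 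Thus the count is: (number of red components at $v$) $\times$ (choices of $w$ within it, i.e.\ its order) — but summed over components this is just the total number of red-reachable vertices, $O(\log n)$ if one notes each red component has order $O(\log n)$ and the components at $v$ are, in expectation and a.a.s., $O(1)$ in number; to be safe with the worst case I would simply record the red-reachable vertex set has size $O(\log^2 n)$, select $w$ from it, then the terminal edge $e$ among $O(\log n)$ incident edge-nodes is actually forced to be a specific one up to $O(1)$ once we also remember $e \ne$ any red internal node; a more careful accounting keeping $w$ and the unique path together absorbs one logarithmic factor, yielding $O(\log^2 n)$.

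The main obstacle I expect is precisely this bookkeeping: making sure the product of the various $O(\log n)$ factors really collapses to $O(\log^2 n)$ rather than $O(\log^3 n)$, which requires exploiting that (i) the red subgraph is a.a.s.\ a forest so red paths are unique once endpoints are fixed, and (ii) the number of red components hanging off a single vertex-node is $O(1)$ a.a.s.\ (again from the subcritical branching process), not $O(\log n)$. If (ii) turns out to be awkward to establish cleanly, the fallback is to accept $O(\log^3 n)$ here; this only weakens the final bound in Lemma~\ref{lem:forests} from $O((n\log^{2h}n)^{h/2})$ to $O((n\log^{3h}n)^{h/2})$, which is still $o(n^h)$ and therefore suffices for~\eqn{sparse} and hence for Lemma~\ref{lem:key}. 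So while the stated exponent on the logarithm is the delicate point, nothing downstream is sensitive to it.
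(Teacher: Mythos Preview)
Your plan has the right two-step structure --- count red paths out of $v$, then count terminal edges --- and this is exactly what the paper does. But the bookkeeping tangle you describe comes from a misconception about degrees in $\T_H$. Recall that vertices of $H$ are \emph{columns} of $\core{(A^1)}$, each of which has exactly $k$ nonzero entries; hence every vertex-node in $\T_H$ has degree exactly $k$, a constant. Lemma~\ref{lem:Hproperties}(c) is only needed to bound degrees of \emph{edge}-nodes. So $v$ is incident to at most $k$ red edge-nodes and, in particular, lies in a \emph{single} component of $\T_{\red{H}}$ (not ``$\deg(v)=O(\log n)$ many''). By Lemma~\ref{lem:smallRedGraph} that component has order $O(\log n)$, and since subcritical components are a.a.s.\ trees (or at worst unicyclic), the number of $(v,u)$-paths with all internal edge-nodes red is $O(\log n)$. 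Then for each such $u$ there are at most $\deg_{\T_H}(u)$ choices for the terminal edge-node $e$; the paper bounds this by $O(\log n)$ via Lemma~\ref{lem:Hproperties}(c) (though in fact $\deg(u)=k=O(1)$ would do). The product is $O(\log^2 n)$, with no third logarithm to worry about.

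In short: the paper's proof is two sentences, and once you correct $\deg_{\T_H}(v)$ from $O(\log n)$ to $k$, your argument collapses to the same two sentences. Your fallback remark --- that any fixed power of $\log n$ would suffice for~\eqn{sparse} --- is correct and a useful sanity check, but it is not needed here.
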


Since there are $h$ edges in $P(T)$, the lemma follows immediately. It remains to prove the claim. 

{\em Proof of Claim~\ref{c:profile}. } By Lemma~\ref{lem:smallRedGraph}, there are $O(\log n)$ choices for a $(v,u)$-path in $\T_H$ such that $u$ is a vertex node, and all the internal edge-nodes are red. Then, by Lemma~\ref{lem:Hproperties}(c), given $u$, there are $O(\log n)$ choices for an edge $e$ incident to $u$ such that the size of $e$ is equal to the size of $e_i$. Combining them together yields the claim. \qed 
\ss

Next we treat members of ${\mathscr Y}^{\delta n}_h$ that are not pseudo-forests. Suppose $S\in {\mathscr Y}^{\delta n}_h$ and ${\mathcal T}_S$ contains a cycle. Then, we can repeatedly remove leaves from  $\T_S$ until we obtain a subgraph of $\T_S$ whose minimum degree is at least two. This subgraph is known as the 2-core of $\T_S$, denoted by $\core{(\T_S)}$. The notation of the 2-core of a graph and hypergraph is analogous to Definition~\ref{def:2core} by treating $A$ as the transpose of a weighted incidence matrix of a hypergraph. Since ${\mathcal T}_S$ contains a cycle, $\core{({\mathcal T}_S)}$ is nonempty. However, the hypergraph with Tanner graph $\core{({\mathcal T}_S)}$ is not necessarily a subgraph of $H$, and in particular, $\core{(\T_S)}$ is not necessarily $\T_{\core{H}}$. This is because that it is possible that there is some edge-node $x$ in $\core{({\mathcal T}_S)}$ with smaller degree than its degree in ${\mathcal T}_H$, and thus the edge corresponding to $x$ in the hypergraph corresponding to the Tanner graph $\core{({\mathcal T}_S)}$ is not an edge in $H$. Look at the example in Figure~\ref{fig:2core}.
The bipartite graph $\T_G$ in the middle is the Tanner graph of the hypergraph $G$ on the left, and the bipartite graph  on the right is $\core{(\T_G)}$, the 2-core of $\T_G$. The edge-node $a$ in $\T_G$ has degree 3, whereas it has degree 2 in $\core{(\T_G)}$ after the removal of $u$, and thus the edge-node $a$ in $\core{(\T_G)}$ does not correspond to an edge of $G$ any more. Indeed, the 2-core of $G$ is empty, whereas $\core{(\T_G)}$ is nonempty. Obviously, 
 to relate $\core{({\mathcal T}_S)}$ to the ``correct subgraph'' of $S$ (which is not $\T_{\core{S}}$ as illustrated by the above example), we need to treat edge-nodes in $\core{({\mathcal T}_S)}$ whose degrees are smaller than their degrees in $S$.

\begin{figure}
\begin{center}
\includegraphics[scale=0.6]{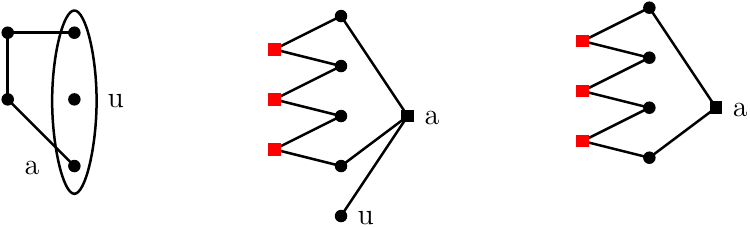}
\caption{An example of a hypergraph, its Tanner graph, and the 2-core of its Tanner graph}
\label{fig:2core}
\end{center}
\end{figure}

\begin{definition}
Given a subgraph $S'$ of a Tanner graph $S$, we say an edge-node $x$ in $S'$ has excess $t$, if $\deg_{S}(x)=\deg_{S'}(x)+t$. The excess of $S'$ is defined by the total excess of all edge-nodes in $S'$. The closure of $S'$, denoted by $\cl{S'}$, is the minimum Tanner graph such that $S'\subseteq \cl{S'}\subseteq S$, $S'$ and $\cl{S}$ have the same set of edge-nodes, and $\deg_{S}(x)=\deg_{\cl{S'}}(x)$ for every edge-node $x$ of $\cl{S'}$.
\end{definition}

For example, let $S$ be the Tanner graph in the middle of Figure~\ref{fig:2core}, and let $S'$ be the graph on the right hand side of Figure~\ref{fig:2core}. Then, edge-node $a'$ in $S'$ has excess 1, and all the other edge-nodes in $S'$ has excess 0. In this example, $\cl{S'}=S$.

We will count non-pseudo-forest members of ${\mathscr Y}^{\delta n}_h$ in an inside-out manner. First we bound the number of subgraphs $S'$ of $\T_H$ that can potentially be $\core{(\T_S)}$ for some $S\in {\mathscr Y}^{\delta n}_h$. Then, we start from  such an $S'$, and find all possible subgraphs $G$ of $H$ such that $G\in {\mathscr Y}^{\delta n}_h$ and $\core{(\T_G)}=S'$. Recall the procedure of obtaining the 2-core of a Tanner graph by repeatedly removing leaves. Recovering $G$ from $\core{(\T_G)}=S'$ can be done by first finding $\cl{S'}$ and then apply a reverse procedure, which repeatedly extends $\cl{S'}$ by attaching some tree-structures. We formally define the procedure below.

Given a subgraph $S$ of a hypergraph $H$, we say that we {\em attach a leaf-edge $x$ to $S$}, if we obtain a subgraph $S\subseteq S'\subseteq H$ by adding into $S$ a set of vertices $v_1,\ldots,v_t\notin V(S)$ and an edge $x\in E(H)$, where $x=\{v_0,v_1,\ldots,v_t\}$ for some $v_0\in V(S)$. A subgraph $S'$ of $H$ is called a {\em tree-extension} of $S$, if $S'$ is obtained from $S$ by repeatedly attaching leaf-edges. Part (a) of the following observation is obvious by the leaf-removal process to obtain the 2-core of $\T_G$ for a hypergraph $G$. Part (b) follows by the fact that the number of leaves does not decrease in the process of tree-extensions.

\begin{obs} Let $h\ge 1$ be fixed.
\begin{enumerate}[(a)]
%\item Every $S\in {\mathscr Y}^{\delta n}_h$ is obtained by attaching trees to $\core{({\mathcal {T}_S})}$.  
\item Suppose $S\in {\mathscr Y}^{\delta n}_h$ and $\mathcal {T}_S$ contains a cycle. Then $S$ is a tree-extension of $S'$, where $S'$ is the hypergraph corresponding to $\cl{\core{(\mathcal {T}_S)}}$.
\item Suppose $S\in {\mathscr Y}^{\delta n}_h$ and $\mathcal {T}_S$ contains a cycle, then $\core{({\mathcal {T}_S})}$ has excess at most $h$. 
\end{enumerate}
\end{obs}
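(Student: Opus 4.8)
The plan is to analyse both parts through the leaf-stripping process that produces $\core{(\mathcal{T}_S)}$ from $\mathcal{T}_S$, run in reverse. We may assume $\mathcal{T}_S$ is connected, as otherwise one treats each component of $S$ separately, disposing of any acyclic ones via Lemma~\ref{lem:forests}. Since $\mathcal{T}_S$ contains a cycle, $\core{(\mathcal{T}_S)}$ is nonempty, hence so is its closure $\cl{\core{(\mathcal{T}_S)}}$ and the subhypergraph $S'$ of $H$ that it represents. The single elementary fact driving everything is: \emph{if a vertex-node $v$ of $\mathcal{T}_S$ is stripped, then $v$ is incident in $\mathcal{T}_S$ to at most one edge-node of $\core{(\mathcal{T}_S)}$}. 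Indeed, an edge of $\mathcal{T}_S$ joining $v$ to a core edge-node survives in the stripping process until $v$ itself is removed (an edge vanishes only when one of its ends is removed, and core nodes are never removed); thus all such edges are present at the instant $v$ is stripped, and there is at most one of them since $v$ then has degree at most $1$. The same reasoning shows that each connected component $K$ of the subgraph of $\mathcal{T}_S$ induced on the stripped nodes is a tree joined to $\core{(\mathcal{T}_S)}$ by at most one edge — a second such edge, or a cycle inside $K$, would force the first node on the corresponding cycle to be stripped to have had degree at least $2$. Hence $\mathcal{T}_S$ is $\core{(\mathcal{T}_S)}$ with a family of pendant trees attached, each at a single node of $\core{(\mathcal{T}_S)}$, and the closure operation recovers, for each core edge-node, exactly its stripped vertex-neighbours (one for each pendant tree attached at it).

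For part~(a) I would argue by descent on $|E(S)|$: it suffices to show that if $S\ne S'$ then $S$ has a leaf-edge whose removal produces a subhypergraph $S^-$ of $H$ with $\core{(\mathcal{T}_{S^-})}=\core{(\mathcal{T}_S)}$ and $\cl{\core{(\mathcal{T}_{S^-})}}=\cl{\core{(\mathcal{T}_S)}}$, because then $S^-$ has the same associated $S'$ and, being a connected subhypergraph of $H$ with fewer edges whose Tanner graph still contains a cycle, is a tree-extension of $S'$ by induction; hence so is $S$. Since $S\ne S'$, the graph $\mathcal{T}_S$ has a stripped edge-node; choose $y$ to be a stripped edge-node at maximum distance from $\core{(\mathcal{T}_S)}$ within its pendant tree. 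Then every vertex-node adjacent to $y$ other than the one nearer the core is a leaf of $\mathcal{T}_S$ lying in no other edge, so the hyperedge $e_y$ is a removable leaf-edge; and deleting $e_y$ together with those degree-$1$ vertices changes neither $\core{(\mathcal{T}_S)}$ (only pendant nodes were removed) nor the set of vertex-nodes incident to core edge-nodes (the removed vertices were adjacent only to $y\notin\core{(\mathcal{T}_S)}$), so the closure is unchanged. The descent terminates at $S=S'\ne\varnothing$, which proves~(a).

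For part~(b), by~(a) the hypergraph $S$ is obtained from $S'$ by a sequence of leaf-edge attachments, and an attachment of a leaf-edge $\{v_0,v_1,\ldots,v_t\}$ introduces the $t\ge 1$ new degree-$1$ vertices $v_1,\ldots,v_t$ (here $t\ge 1$ because every edge of $H$ has size at least $2$) while merely raising the degree of the single old vertex $v_0$, thereby destroying at most one degree-$1$ vertex; so the number of degree-$1$ vertices never decreases, and $S'$ therefore has at most $h$ of them, where $h$ is the number of degree-$1$ vertices of $S$. On the other hand the degree-$1$ vertices of $S'$ are exactly the vertices added by the closure: a core vertex-node keeps degree $\ge 2$ (the closure adds no edge at a core vertex-node), while each closure-added vertex is incident to exactly one core edge-node by the fact above and so has degree $1$. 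Their number equals $\sum_x\big(\deg_{\mathcal{T}_S}(x)-\deg_{\core{(\mathcal{T}_S)}}(x)\big)$, the sum over edge-nodes $x$ of $\core{(\mathcal{T}_S)}$ — the sets of stripped vertex-neighbours of distinct core edge-nodes being disjoint, again by that fact — which is precisely the excess of $\core{(\mathcal{T}_S)}$. Thus the excess is at most $h$.

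I expect the bookkeeping in part~(a) to be the only real obstacle: checking that $\mathcal{T}_S$ decomposes as its $2$-core together with singly-attached pendant trees, and that peeling off a deepest leaf-edge preserves both the $2$-core and its closure. Once the ``at most one incident core edge-node'' fact is in place these verifications are routine, and part~(b) follows from it with essentially no further work.
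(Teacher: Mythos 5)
Your argument is correct and is exactly the fleshed-out version of the paper's own justification, which simply asserts that part (a) is ``obvious by the leaf-removal process'' and that part (b) ``follows by the fact that the number of leaves does not decrease in the process of tree-extensions.'' Your key fact (a stripped node meets at most one core node, so the stripped part decomposes into singly-attached pendant trees) and your identification of the excess of $\core{({\mathcal T}_S)}$ with the number of degree-one vertices of $S'$ are precisely the details the paper leaves implicit; the one wrinkle --- that $S$ may have acyclic components, which no tree-extension of $S'$ can produce --- is an imprecision in the statement itself rather than in your argument, and you correctly flag it and note that it is harmless for the subsequent counting.
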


%\proof Suppose that $\mathcal {T}_S$ is acyclic where $S\in {\mathscr Y}^{\delta n}_h$. Then $\mathcal {T}_S$ has a cycle which is in $\core{({\mathcal {T}_S})}$. Moreover, let $t$ be the excess of $\core{({\mathcal {T}_S})}$. $\mathcal {T}_S$ is obtained by first attaching $t$ leaves to the edge-nodes that have positive excess, and then repeatedly attaching ``pending'' edges, during which the number of leaves do not decrease. Since $\mathcal {T}_S$ has $h$ leaves, $t\le h$. \qed

If  $S\in {\mathscr Y}^{\delta n}_h$ then  $\core{({\mathcal T}_S)}$ is a subgraph of ${\mathcal T}_H$ in which every node (i.e.\ every vertex-node and every edge-node) has degree at least two. This motivates the consideration of the following set of subgraphs of $\T_H$.
Let $h\ge 0$ be fixed. Let ${\mathcal A}^h(x,y_2,y_{\ge 3},t)$ denote the set of subgraphs $S$ of ${\mathcal T}_H$ such that
\begin{itemize}
\item $S$ has exactly $x$ vertex-nodes , each of which has degree at least two in $S$;
\item $S$ has exactly $y_2$ edge-nodes whose degree in $\mathcal{T}_H$ is 2, and $y_{\ge 3}$ edge-nodes whose degree in $\mathcal{T}_H$ is at least three;
\item $S$ has total excess $h$;
\item $S$ has exactly $t$ edges.
\end{itemize}
We bound the number of non-pseudo-forest members of ${\mathscr Y}^{\delta n}_h$ by bounding the cardinality of ${\mathcal A}^h(x,y_2,y_{\ge 3},t)$ and the number of tree-extensions of any member of ${\mathcal A}^h(x,y_2,y_{\ge 3},t)$.
\begin{lemma}\lab{lem:corecounts} Let $h\ge 0$ be fixed and $\delta>0$ be a sufficiently small real number.
\begin{enumerate}[(a)]
\item $\sum_{t\le \delta n_1}\sum_{x, y_2,y_{\ge 3}}\ex|{\mathcal A}^h(x,y_2,y_{\ge 3},t)| = O((\log^{2h+7} n)  n^{h/2}).$
\item There exists a sufficiently large constant $K>0$ such that $\sum \ex|{\mathcal A}^h(x,y_2,y_{\ge 3},t)| = o(1)$, where the summation is over all $(x,y_2,y_{\ge 3},t)$ such that $t\le \delta n_1$ and $\max\{y_2,y_{\ge 3}\}\ge K\log^2 n$.
\item For every fixed $K>0$, a.a.s.\ for every 
\[S\in\cup_{j=0}^h\cup_{\substack{(x,y_2,y_{\ge 3},t)\\ t\le K\log^3 n}}\mathcal {A}^j(x,y_2,y_{\ge 3},t),
\] the subgraph of $H$ with Tanner graph $\cl{S}$ has $O(\log^{2h+3} n)$ tree extensions, each of which has $h$ vertices with degree one.
\end{enumerate}
\end{lemma}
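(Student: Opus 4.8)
The plan is to analyze the three parts via first-moment computations in the configuration model, using Lemma~\ref{lem:Hproperties} to control the degree sequence and edge counts. Let me sketch each part.

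For part (a), I would set up the first-moment count of $|{\mathcal A}^h(x,y_2,y_{\ge 3},t)|$ directly in the configuration model. A member $S$ has $x$ vertex-nodes (each of degree $\ge 2$ in $S$), $y_2$ edge-nodes of $\T_H$-degree exactly $2$, $y_{\ge 3}$ edge-nodes of $\T_H$-degree $\ge 3$, total excess $h$, and $t$ edges. Since every node of $S$ has degree $\ge 2$ and the total excess is $h$, counting edge-endpoints from the edge-node side gives $t \ge 2y_2 + \sum(\text{degrees of the }y_{\ge 3}\text{ nodes}) - h \ge 2y_2 + 3y_{\ge 3} - h$, while counting from the vertex-node side gives $t \ge 2x$. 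Combined with $|E(S)| = t \le \delta n_1$ this forces $x, y_2, y_{\ge 3} = O(t)$. The number of ways to choose the underlying multigraph structure with these parameters and then embed it into $\T_H$ (choosing which actual vertex- and edge-nodes of $H$ are used, and matching points in the configuration model) contributes roughly $\binom{n_1}{x}\binom{(\beta_k+o(1))n_1}{y_2}\binom{\Theta(n_1)}{y_{\ge 3}}$ for the node choices, times the number of bipartite graphs on these nodes with $t$ edges and the prescribed degrees, divided by the total number of configuration-model pairings incident to those edge-nodes. Because every vertex-node has degree $\ge 2$, the ``excess over a forest'' is controlled: a connected component on $c$ nodes with $t'$ edges has $t' \ge c-1$, and the cycle rank equals $t' - c + 1$. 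The crucial point is that the minimum-degree-$2$ condition on the vertex-nodes means $S$ looks like a union of at most $h/2$ ``necklace''-type structures (the leaves all live on the tree-extension side, not inside the core-like set counted by ${\mathcal A}^h$ — wait, actually ${\mathcal A}^h$ allows all nodes degree $\ge 2$, so ${\mathcal A}^h$ members have no leaves at all). The $n^{h/2}$ factor arises from the $h$ units of excess distributed among the edge-nodes: each unit of excess corresponds, after taking the closure, to extra edge-endpoints that must be matched back into the structure, and a standard counting argument (as in the analysis of the $\ell$-core / cores of random hypergraphs) yields the $n^{h/2}$ growth, with the $\log^{2h+7}n$ absorbing the bounded-degree slack from Lemma~\ref{lem:Hproperties}(c). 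I would organize this by first fixing the ``shape'' (a multigraph on $O(h)$ special nodes with the prescribed excess and $\le h$ cyclomatic complexity), of which there are $O(1)$ many, then bounding the number of embeddings of each shape, where long degree-$2$ ``red paths'' are counted using Lemma~\ref{lem:smallRedGraph} to get $O(\log n)$ per path.

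For part (b), I would show that the first-moment sum over the tail $\max\{y_2, y_{\ge 3}\} \ge K\log^2 n$ is $o(1)$. Here I use that the configuration model's probability of realizing a fixed embedded structure with $t$ matched points decays geometrically in $t$: each additional matched point incident to an edge-node of $H$ is matched to a uniformly random vertex-point among $\Theta(n)$ of them, and the requirement that it lands in one of the $x = O(t)$ prescribed vertex-nodes (each with $O(\log n)$ points) gives a factor $O(t\log n / n)$ per step. Since $t = \Omega(\max\{y_2,y_{\ge 3}\}) = \Omega(K\log^2 n)$, the product over $t$ steps beats the $O(1)$ choices of shape and the $\binom{n}{O(t)}^{O(1)}$ entropy factors once $K$ is large enough: the entropy contributes $\exp(O(t\log(n/t)))$ while the matching probability contributes $\exp(-\Omega(t\log(n/\text{polylog})))$, so for $t \ge K\log^2 n$ the tail is $O(1)\cdot n^{O(h)}\cdot e^{-\Omega(K\log^3 n)} = o(1)$. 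I'd state this as a clean geometric-decay estimate and sum the geometric series.

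For part (c), given $S$ with Tanner graph in $\bigcup_j {\mathcal A}^j(\cdots)$ with $t \le K\log^3 n$ edges, I need to bound the number of tree-extensions of (the hypergraph with Tanner graph) $\cl{S}$ having exactly $h$ degree-one vertices. A tree-extension attaches leaf-edges repeatedly; the total structure has $\le h$ leaves throughout the process (leaves never decrease), so the ``tree part'' attached consists of at most $h/2$ subtrees, each a pseudo-tree with $O(1)$ edges of size $\ge 3$ in its profile plus red-path expansions. By exactly the argument of Lemma~\ref{lem:forests} and Claim~\ref{c:profile} — each profile edge expands in $O(\log^2 n)$ ways via Lemma~\ref{lem:smallRedGraph} and Lemma~\ref{lem:Hproperties}(c) — and noting there are $O(1)$ profile shapes and at most $t = O(\log^3 n)$ choices of attachment point in $\cl{S}$ for the root of each subtree, the number of tree-extensions is $O(\log^{2h+3}n)$ (the extra $\log^3 n$ from the attachment-point choice is absorbed into the exponent since $h\ge 1$; more carefully, $h$ subtrees give $(\log^3 n)^{h}\cdot(\log^2 n)^{O(h)}$, which I'd bound by $O(\log^{2h+3}n)$ after checking the constants, or simply state the bound with a possibly larger polylog exponent that still suffices downstream).

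The main obstacle I anticipate is part (a): getting the exponent of $n$ to be exactly $h/2$ rather than something larger requires a careful accounting of how units of excess translate into ``savings'' in the configuration-model matching probability versus ``costs'' in the number of ways to place the extra edge-endpoints. The heart of it is the standard but delicate fact that in a minimum-degree-$\ge 2$ subgraph of a sparse random structure, each unit of cyclomatic excess contributes a net factor of $O(\sqrt{n}\cdot\mathrm{polylog})$ rather than $O(n)$, because the excess edge-endpoints must be matched among the $O(\mathrm{polylog})$-sized vertex-node population already selected. I would prove this by treating each edge-node of excess $e$ as contributing a factor $O(n^{e}\cdot\text{polylog})$ from endpoint placement but a factor $O(n^{-e})\cdot O(\text{polylog})^{e}$ is wrong — rather, the closure operation and the fact that excess endpoints land in already-chosen vertex-nodes gives the crucial $\sqrt{n}$ per unit of excess — and I'd cross-check against the known $|{\mathscr Y}^{\delta n}_h| = \Omega(n^{\lfloor h/2\rfloor})$ lower bound mentioned in the text to make sure the exponent is tight.
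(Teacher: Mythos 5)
Your part (c) matches the paper's argument (closure has $O(\log^3 n)$ nodes; each tree-extension is enumerated as in Lemma~\ref{lem:forests}) and is fine. Parts (a) and (b), however, contain a genuine gap.

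For part (a), your plan to reduce to ``$O(1)$ many shapes'' of bounded cyclomatic complexity is false. A member of ${\mathcal A}^h(x,y_2,y_{\ge 3},t)$ has every node of degree at least two and total excess $h$, but its cyclomatic number is \emph{not} bounded by $h$: already for $h=0$ one has disjoint unions of arbitrarily many alternating cycles through degree-two edge-nodes, and more generally $t-(x+y_2+y_{\ge 3})$ can be as large as roughly $y_{\ge 3}/2$, so the number of independent cycles (and of degree-$\ge 3$ edge-nodes) is unbounded. There are therefore infinitely many ``kernels,'' and the whole content of the lemma is that the sum over them converges. The paper establishes this by a direct entropy-versus-probability computation in the configuration model, summing over all $(x,y_2,y_{\ge 3},t)$: after optimising over $x$ and using $t\ge 2y_2+3y_{\ge 3}-h$, the bound takes the form $O(t^{h+1/2})\,n_1^{h/2}\cdot\rho_2^{\,y_2}\cdot(\mathrm{polylog}(n)/\sqrt{n_1})^{\,y_{\ge 3}}$, where $\rho_2\approx\beta_k(2+3c)(1+O(\delta^{1/4}))<0.96$. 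Two mechanisms make the sum converge: each degree-$\ge 3$ edge-node costs a factor $n^{-1/2}\mathrm{polylog}(n)$, and each degree-two edge-node costs only a \emph{constant} factor, which is less than $1$ precisely because $\beta_k<0.45$ (Lemma~\ref{lem:Hproperties}(a), i.e.\ subcriticality of the red structure). Your proposal never invokes this quantitative bound in part (a), and the convergence over the number of cycles/components is exactly what it would fail to deliver. Relatedly, your part (b) asserts that the matching probability beats the entropy by $\exp(-\Omega(t\log n))$; this is quantitatively wrong in the $y_2$-dominated regime, where the two nearly cancel and the net decay per unit of $y_2$ is only the constant ratio $\rho_2<1$ above. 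A constant ratio still yields $o(1)$ once $y_2\ge K\log^2 n$ (which is why the threshold in (b) is $K\log^2 n$ rather than a constant), but your argument as written does not establish that the ratio is below $1$, and that is the crux. The mechanism producing $n^{h/2}$ is also simpler than your description suggests: the $h$ unmatched (excess) points reduce $t$ to $2y_2+3y_{\ge 3}-h$, so the factor $(t/en_1)^{t/2}n_1^{y_2+y_{\ge 3}}$ becomes $n_1^{h/2-y_{\ge 3}/2}$ up to polylogs; no separate ``$\sqrt n$ per unit of excess'' matching argument is needed.
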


Using Lemma~\ref{lem:corecounts} we confirm~\eqn{sparse} in the following lemma.
\begin{lemma} \lab{lem:Ybound}
%\begin{enumerate}[(a)]
%\item $\ex |{\mathscr Y}^{\delta n}_1|=O(1)$, and a.a.s.\ every $S\in {\mathscr Y}_h$ has order at most $O(\log n)$.
%\item 
For all sufficiently small $\delta>0$ and every fixed integer $h\ge 1$, a.a.s.\ $|{\mathscr Y}^{\delta n}_h|=O(n^{h/2+o(1)})$.
%\end{enumerate}
\end{lemma}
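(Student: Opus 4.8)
The plan is to prove Lemma~\ref{lem:Ybound} by splitting every member $S\in{\mathscr Y}^{\delta n}_h$ into two types and bounding each separately. First, if $\T_S$ is a forest, then $S$ is a pseudo-forest with at most $h$ leaves (indeed exactly $h$ degree-one vertices), so Lemma~\ref{lem:forests} already gives that a.a.s.\ there are only $O((n\log^{2h}n)^{h/2}) = O(n^{h/2+o(1)})$ such members. Second, if $\T_S$ contains a cycle, then by Observation~(a) preceding Lemma~\ref{lem:corecounts}, $S$ is a tree-extension of the hypergraph corresponding to $\cl{\core{(\T_S)}}$, and by Observation~(b) the core $\core{(\T_S)}$ has excess at most $h$. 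So the second type is counted by first enumerating the possible cores and then enumerating their tree-extensions.

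For the second type, I would organize the count as follows. For each $j\in\{0,1,\dots,h\}$, the core $\core{(\T_S)}$ lies in some ${\mathcal A}^j(x,y_2,y_{\ge 3},t)$ with $t\le |E(S)|\le \delta n$ (more precisely $t\le\delta n_1$ after adjusting constants via Lemma~\ref{lem:rows}). By Lemma~\ref{lem:corecounts}(b), applied with the large constant $K$, a.a.s.\ no such core has $\max\{y_2,y_{\ge 3}\}\ge K\log^2 n$; hence a.a.s.\ we only need to consider cores with $y_2,y_{\ge 3}< K\log^2 n$, which forces $t = O(\log^2 n)$ edge-nodes and, since each has degree $O(\log n)$ by Lemma~\ref{lem:Hproperties}(c), also $x = O(\log^3 n)$ and $t_{\text{edges}} = O(\log^3 n)$ in the Tanner-graph sense. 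Then Lemma~\ref{lem:corecounts}(a) bounds the expected total number of such cores by $O(\log^{2h+7}n \cdot n^{h/2})$, so by Markov's inequality a.a.s.\ there are $O(n^{h/2+o(1)})$ of them (say at most $n^{h/2}\log^{2h+8}n$). Finally, by Lemma~\ref{lem:corecounts}(c), applied with this $K$, a.a.s.\ each core's closure admits only $O(\log^{2h+3}n)$ tree-extensions with exactly $h$ degree-one vertices. Multiplying the number of cores by the number of tree-extensions per core gives $O(n^{h/2}\log^{2h+8}n)\cdot O(\log^{2h+3}n) = O(n^{h/2+o(1)})$ members of the second type.

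Combining the two types, a.a.s.\ $|{\mathscr Y}^{\delta n}_h| = O(n^{h/2+o(1)})$, as claimed. One bookkeeping point to be careful about: Lemma~\ref{lem:corecounts}(c) is stated as an a.a.s.\ statement ranging over all $S$ in the union of ${\mathcal A}^j(x,y_2,y_{\ge 3},t)$ with $t\le K\log^3 n$, so I need the reduction in the previous paragraph (via part (b)) to guarantee that a.a.s.\ every relevant core actually falls in that range; then parts (a), (b), (c) can all be invoked simultaneously on the same high-probability event, and the final union bound over the $O(h)=O(1)$ values of $j$ and the $O(\text{polylog}\,n)$ tuples $(x,y_2,y_{\ge 3},t)$ loses only a polylogarithmic factor, which is absorbed into $n^{o(1)}$.

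The main obstacle I expect is not in this lemma itself — given Lemmas~\ref{lem:forests} and~\ref{lem:corecounts} the argument is essentially a clean three-way product bound — but rather in making sure the three parts of Lemma~\ref{lem:corecounts} are quoted with mutually compatible parameters: part (a) needs the sum over \emph{all} tuples with $t\le\delta n_1$, part (b) supplies the cutoff $K\log^2 n$, and part (c) needs its range $t\le K\log^3 n$ to contain every core surviving the part-(b) cutoff. The only genuinely delicate step is verifying that after conditioning on the (a.a.s.) event from part (b), every core of a type-two member indeed has at most $K\log^3 n$ Tanner-graph edges so that part (c) applies; this follows since $y_2,y_{\ge 3} = O(\log^2 n)$ together with maximum degree $O(\log n)$ (Lemma~\ref{lem:Hproperties}(c)) bounds the total number of edges incident to edge-nodes, hence the number of edges of the core, by $O(\log^3 n)$.
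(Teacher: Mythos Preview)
Your proposal is correct and follows essentially the same route as the paper's proof: split into pseudo-forests (handled by Lemma~\ref{lem:forests}) and subgraphs whose Tanner graph has a nonempty 2-core, then use Lemma~\ref{lem:corecounts}(b) with Markov to rule out large cores, Lemma~\ref{lem:Hproperties}(c) to bound the number of Tanner edges in the surviving cores by $O(\log^3 n)$, and finally multiply the core count from Lemma~\ref{lem:corecounts}(a) by the per-core tree-extension bound from Lemma~\ref{lem:corecounts}(c). Your explicit invocation of Markov's inequality for the core count and your bookkeeping paragraph about the compatibility of the ranges in parts~(a),~(b),~(c) are exactly the points the paper handles (somewhat more tersely) in its proof.
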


\proof Let $K>0$ be the constant that makes the assertion in Lemma~\ref{lem:corecounts}(b) holds. Then, the probability of having $S\in {\mathscr Y}^{\delta n}_h$ such that $\core{(\T_S)}$ has more than $K\log^2 n$ edge-nodes of degree two, or more than $K\log^2 n$ edge-nodes of degree at least three, is $o(1)$ by Markov's inequality. Now consider $S\in {\mathscr Y}^{\delta n}_h$ such that  $\core{(\T_S)}$ is nonempty and  has at most $2K\log^2 n$ edge-nodes. By Lemma~\ref{lem:Hproperties}(c), we may assume that $\core{(\T_S)}$ has $O(K\log^3 n)=O(\log^3 n)$ edges. 
By Lemma~\ref{lem:corecounts}(a,c),
the expected number of such $S\in {\mathscr Y}^{\delta n}_h$ is bounded by $O((\log^{4h+10} n)n^{h/2})=O(n^{h/2+o(1)})$. The assertion now follows by combining the above two cases and Lemma~\ref{lem:forests}. \qed

\subsection{Proof of Lemma~\ref{lem:key}}
By Lemma~\ref{lem:Ybound} and the argument above~\eqn{sparse}, a.a.s.\ there exists constant $\delta>0$ such that %for every $1\le h\le r$, 
$\E$ holds, where $\E$ denotes the event that there is no subgraph $S$ of $H$ such that $S$ has less than $\delta n$ edges and has exactly at most $r$ vertices of degree equal to 1, all of which are contained in $[r]$. Suppose event $\E$ holds for $H$. Suppose on the contrary to~\eqn{HJ} that there exists nonzero $\alpha\in \bF^r$ such that $H_J^{\alpha}$ has less than $\delta n$ edges. But $H_J^{\alpha}$ must be a subgraph of $H$ such that all vertices in $V(H_J^{\alpha})\setminus J$ have degree at least two, as shown above~\eqn{HJ}. In other words, $H_J^{\alpha}$ is a subgraph of $H$ with less than $\delta n$ edges, and all vertices with degree equal to 1 is contained in $[r]$. This contradicts with the event $\E$.\qed

\subsection{Proof of Lemma~\ref{lem:corecounts}.} 

We first prove part (c). Given any \[S\in\cup_{j=0}^h\cup_{\substack{(x,y_2,y_{\ge 3},t)\\ t\le K\log^3 n}}\mathcal {A}^j(x,y_2,y_{\ge 3},t),\] there are  $O(\log^3 n)$ edges in $S$, and thus  $O(\log^3 n)$ total vertex-nodes and edge-nodes in $S$ and $\cl{S}$. Immediately, the subgraph $G$ of $H$ such that  $\T_G=\cl{S}$ has $O(\log^3 n)$ vertices. Following the same proof as in Lemma~\ref{lem:forests}, 
there are $O(\log^{2h} n)$ tree extensions starting from any given vertex in $G$. Thus, the total number of tree extensions is $O(\log^{2h+3}n)$.

Next, we prove parts (a) and (b). We use the configuration model for the analysis.
Given any $S\in {\mathcal A}^h(x,y_2,y_{\ge 3})$, let $y_j$ denote the number of edge-nodes in $S$ whose degree in ${\mathcal T}_H$ is $j$, for $j\ge 3$.
 Then immediately $t=\sum_{j\ge 2}jy_j-h$.  Moreover, every vertex-nodes in $S$ has degree at least two, and at most $k$. Thus, necessarily, $2x\le t\le kx$. We choose vertex-nodes and edge-nodes for $S\in {\mathcal A}^h(x,y_2,y_{\ge 3},t)$. Let $\beta=\beta_k$. By Lemma~\ref{lem:Hproperties}(a), there are at most $(\beta+O(\eps_1))n_1$ edge-nodes in ${\mathcal T}_H$ with degree two. Thus, 
there are at most $\binom{(1+O(\eps_1))n_1}{x}$ ways to choose the vertex-nodes, at most $\binom{(\beta+O(\eps_1)) n_1}{y_2}$ ways to choose edge-nodes that have degree two in ${\mathcal T}_H$, and  at most $\binom{(1+O(\eps_1))n_1}{y_{\ge 3}}$ ways to choose the edge-nodes whose degree in ${\mathcal T}_H$ is at least three. Given the choices of all vertex-nodes and edge-nodes, there are at most $(\sum_{j\ge 2}jy_j)^h=(t+h)^h=O(t^h)$ ways to choose the $h$ points in the edge-nodes of $S$ that are not matched to any points in the vertex-nodes of $S$. Then, there are at most $\binom{k}{2}^x \binom{kx-2x}{t-2x}$ ways to choose $t$ points from the vertex-nodes of $S$ so that every vertex-nodes contain at least two of the $t$ chosen points. Finally, there are $t!$ ways to match the $t$ points in the vertex-nodes of $S$ to the $t$ points in the edge-nodes. The probability of the appearance of every such $t$ pairs of points is
\[
\prod_{i=0}^{t-1}\dfrac{1}{(kn_1-2i-1)}\le (kn_1-2t)^{-t}.
\]
Hence,
\begin{align*}
\ex |{\mathcal A}(x,y_2,y_{\ge 3},t)| \le & \binom{(1+O(\eps_1))n_1}{x} \binom{(\beta+O(\eps_1)) n_1}{y_2} \binom{(1+O(\eps_1))n_1}{y_{\ge 3}} O(t^h) \binom{k}{2}^x \binom{kx-2x}{t-2x} \\
&\times t!  (kn_1-2t)^{-t}.
\end{align*}
Let $M=(1+O(\eps_1))n_1$ and define
\[
g(x)=\binom{(1+O(\eps_1))n_1}{x}\binom{k}{2}^x \binom{kx-2x}{t-2x}.
\]
Then, for any $1\le x\le t/2-1$ where $t\le \delta n_1$,
\[
\frac{g(x+1)}{g(x)}=\frac{M-x}{x+1}\binom{k}{2}(t-2x)_2\frac{((k-2)(x+1))_{t-2x-2}}{((k-2)x)_{t-2x}}\ge \frac{1}{\delta} \left(\frac{t-2x-1}{kx-t+2}\right)^2.
\]
The above ratio is at least one if $t-2x\ge 2k\sqrt{\delta}x$. On the other hand, $x\le t/2$ is necessary for ${\mathcal A}(x,y_2,y_{\ge 3},t)$ to be nonempty.
Consequently, $g(x)$ is maximised at some 
\[
x^*=(1/2+O(\delta^{1/2}))t, \quad \text{and}\ x^*\le t/2.
\]
Thus, by choosing sufficiently small $\delta$,
\begin{align*}
\ex |{\mathcal A}(x,y_2,y_{\ge 3},t)| = & O(t^h)  \binom{(1+O(\eps_1))n_1}{t/2} \binom{(\beta+O(\eps_1)) n_1}{y_2} \binom{(1+O(\eps_1))n_1}{y_{\ge 3}}  \binom{k}{2}^{t/2} \binom{kx-2x^*}{t-2x^*} \\
&\times t!  (kn_1-2t)^{-t}\\
=&O(t^h)  \binom{(1+O(\eps_1))n_1}{t/2} \binom{(\beta+O(\eps_1)) n_1}{y_2} \binom{(1+O(\eps_1))n_1}{y_{\ge 3}} \left(\dfrac{k^2}{2}\right)^{t/2} \\
& \times \binom{kt/2}{O(\delta^{1/2} t)} t!  (kn_1(1+O(\delta)))^{-t}.
\end{align*}
Using Stirling's formula, the inequality $\binom{y}{z}\le (ey/z)^z$ and by setting $(y/0)^0=1$ for any real $y$, we obtain
\begin{align*}
\binom{(1+O(\eps_1))n_1}{t/2} \le &  \left(\dfrac{(2+O(\eps_1))en_1}{t}\right)^{t/2}\\
\binom{(\beta+O(\eps_1)) n_1}{y_2} \le &  \left(\dfrac{e(\beta +O(\eps_1))n_1}{y_2}\right)^{y_2}\\
 \binom{(1+O(\eps_1))n_1}{y_{\ge 3}} \le & \left(\dfrac{(e+O(\eps_1))n_1}{y_{\ge 3}}\right)^{y_{\ge 3}}\\
 \binom{kt/2}{O(\delta^{1/2} t)} \le & \exp(O(\delta^{1/2}\log\delta^{-1/2})t)\\
 t! = & O(t^{1/2})\left(t/e\right)^t.
\end{align*}
Hence,
\begin{align*}
\ex |{\mathcal A}(x,y_2,y_{\ge 3},t)| \le & O(t^{h+1/2}) \left( \exp(O(\delta^{1/2}\log \delta^{-1/2}+\eps_1)) \left(\frac{t}{en_1}\right)^{1/2} \right)^t \left(\dfrac{e(\beta +O(\eps_1))n_1}{y_2}\right)^{y_2} \\
&\times \left(\dfrac{(e+O(\eps_1))n_1}{y_{\ge 3}}\right)^{y_{\ge 3}}.
\end{align*}
Therefore, there exists $K=\exp(O(\delta^{1/2}\log \delta^{-1/2}+\eps_1))$ such that
\begin{equation}
\ex |{\mathcal A}(x,y_2,y_{\ge 3},t)| \le  O(t^{h+1/2}) \left( K \left(\frac{t}{en_1}\right)^{1/2} \right)^t \left(\dfrac{e(\beta +O(\eps_1))n_1}{y_2}\right)^{y_2}  \left(\dfrac{(e+O(\eps_1))n_1}{y_{\ge 3}}\right)^{y_{\ge 3}}. \lab{eq:Abound}
\end{equation}
The derivative of $\log \left(t^{h+1/2} \left( K \left(\frac{t}{en_1}\right)^{1/2} \right)^t\right)$ is
\[
\frac{h+1/2}{t} + \log K + \frac{1}{2}\log (t/en_1) +\frac{1}{2}<0,
\]
as $t<\delta n_1$ where $\delta$ is sufficiently small. So the right hand side of~\eqn{eq:Abound} is a decreasing function of $t$. Since 
\[
t=\sum_{j\ge 2}jy_j-h\ge 2y_2+3y_{\ge 3}-h,
\]
the right hand side of~\eqn{eq:Abound} is maximised at $t=2y_2+3y_{\ge 3}-h$. Hence, using $\eps_1=o(\delta)$ and thus $K=1+O(\delta^{1/4})$,
\begin{align*}
\ex |{\mathcal A}(x,y_2,y_{\ge 3},t)| = & O((y_2+y_{\ge 3})^{h+1/2} n_1^{h/2}) \left(K^2\cdot \dfrac{2y_2+3y_{\ge 3}}{en_1} \cdot \frac{e(\beta+O(\eps_1))n_1}{y_2}\right)^{y_2}\\
&\times \left(K^3\left(\dfrac{2y_2+3y_{\ge 3}}{en_1}\right)^{3/2} \cdot \frac{(e+O(\eps_1))n_1}{y_{\ge 3}}\right)^{y_{\ge 3}}\\
= & O((y_2+y_{\ge 3})^{h+1/2} n_1^{h/2})   \left(\dfrac{\beta (2y_2+3y_{\ge 3})}{y_2(1+O(\delta^{1/4}))}\right)^{y_2} \left(\dfrac{(2y_2+3y_{\ge 3})^{3/2}}{y_{\ge 3}\sqrt{en_1}(1+O(\delta^{1/4}))}\right)^{y_{\ge 3}}.
\end{align*}
%%%%%%%%
%%%%%%%%
\remove{
Hence,
\begin{align*}
\ex |{\mathcal A}(x,y_2,y_{\ge 3})| \le &\binom{(1+O(\eps))n_1}{x} \binom{(\beta+O(\eps)) n_1}{y_2} \binom{(1+O(\eps))n_1}{y_{\ge 3}} (t+h)^h \binom{k}{2}^x \binom{kx-2x}{t-2x}\\
&\times  t!  (kn_1-2t)^{-t}.
\end{align*}
Let $g(t)=(t+h)^h \binom{k}{2}^x \binom{kx-2x}{t-2x} t!  (kn_1-2t)^{-t}.$ Then,
\[
\frac{g(t+1)}{g(t)}=\left(1+\frac{1}{t+h}\right)^h\cdot \frac{kx-t}{t+1-2x}\cdot (t+1) \left(\frac{kn_1-2t}{kn_1-2t-2}\right)^t\frac{1}{kn_1-2t-2}.
\]
Since $2x\le t\le kx$ and $1\le x\le \delta n_1$ where $\delta>0$ is sufficiently small,
\begin{align*}
\left(1+\frac{1}{t+h}\right)^h \left(\frac{kn_1-2t}{kn_1-2t-2}\right)^t &=O(1)\\
 \frac{kx-t}{t+1-2x}\cdot (t+1)\cdot\frac{1}{kn_1-2t-2} &= O\left(\frac{x(t+1)}{(t+1-2x)n_1}\right).
\end{align*}
It follows that $t^*= (2+O(\delta))x$ where  $g(x)$ is maximised at $t^*$.
Consequently, by setting $(y/0)^0=1$ for any real $y$ and applying Stirling's formula,
\begin{align*}
\ex |{\mathcal A}(x,y_2,y_{\ge 3})| \le &\binom{(1+O(\eps))n_1}{x} \binom{(\beta+O(\eps)) n_1}{y_2} \binom{(1+O(\eps))n_1}{y_{\ge 3}} O(x^h) \left(\dfrac{k^2}{2}\right)^{x} \binom{kx-2x}{O(\delta x)}\\ 
&\times ((2+O(\eps))x)!  (kn_1(1+O(\delta)))^{-(2+O(\delta))x}\\
\le & O(x^{h+1/2}) \left(\dfrac{(1+O(\eps))en_1}{x}\right)^{x} \left(\dfrac{k^2}{2}\right)^{x}    \left(\dfrac{e\beta n_1}{y_2}\right)^{y_2} \left(\dfrac{en_1}{y_{\ge 3}}\right)^{y_{\ge 3}}\\ 
&\times\left(\exp(O(\delta\log(1/\delta)))\left(\dfrac{2x}{ekn_1(1+O(\delta))}\right)^{2+O(\delta)}\right)^x \\
\le& O(x^{h+1/2}) \left(\dfrac{2x}{en_1}(1+O(\eps+\delta\log(1/\delta)))\right)^{x}  \left(\dfrac{e\beta n_1}{y_2}\right)^{y_2} \left(\dfrac{en_1}{y_{\ge 3}}\right)^{y_{\ge 3}}
\end{align*}
Note that the above is a decreasing function of $t$ and that $t\ge 2y_2+3y_{\ge 3}-h$. Thus,
\begin{align*}
\ex |{\mathcal A}(x,y_2,y_{\ge 3})| &\le O((y_2+y_{\ge 3})^{h+1/2} n_1^{h/2})   \left(\dfrac{\beta (2y_2+3y_{\ge 3})}{y_2(1+O(\delta))}\right)^{y_2} \left(\dfrac{(2y_2+3y_{\ge 3})^{3/2}}{y_{\ge 3}\sqrt{en_1}(1+O(\delta))}\right)^{y_{\ge 3}}.
\end{align*}
}
Let $0<c<1$ be a constant such that $\beta(2+3c)<0.95$. We consider two cases.

{\em Case 1: $y_{\ge 3}\le cy_2$.} By choosing sufficiently small $\delta>0$,
\begin{align}
\ex |{\mathcal A}(x,y_2,y_{\ge 3},t)| = O(y_2^{h+1/2} n_1^{h/2}) \cdot  0.96^{y_2} \left(\dfrac{C_1 y_2^{3/2}}{y_{\ge 3}\sqrt{n_1}}\right)^{y_{\ge 3}}\quad \mbox{for some constant $C_1>0$.}\label{eq:case1}
\end{align}
Notice that
\begin{align*}
&\sum_{y_2\le \delta n_1} \sum_{y_{\ge 3}<cy_2} \sum_{t\le \delta n_1} \sum_{t/k\le x\le t/2}\ex |{\mathcal A}(x,y_2,y_{\ge 3},t)| = {\sum}_1+{\sum}_2,
\end{align*}
where
\begin{align*}
{\sum}_1&=\sum_{0\le y_2< \log^2 n_1} \sum_{y_{\ge 3}<cy_2} \sum_{t\le \delta n_1} \sum_{t/k\le x\le t/2} \ex |{\mathcal A}(x,y_2,y_{\ge 3},t)|\\
{\sum}_2&=\sum_{\log^2 n_1\le y_2\le \delta n_1} \sum_{y_{\ge 3}<cy_2} \sum_{t\le \delta n_1} \sum_{t/k\le x\le t/2} \ex |{\mathcal A}(x,y_2,y_{\ge 3},t)| 
%&=\sum_{\log^2 N\le y_2\le \delta N} \sum_{y_{\ge 3}<cy_2} \sum_{x}\sum_{t\le \delta N}  O(y_2^{h+1/2} N^{h/2}) \cdot  0.96^{y_2} \left(\dfrac{C_1 y_2^{3/2}}{y_{\ge 3}\sqrt{N}}\right)^{y_{\ge 3}}
\end{align*}
For the first sum above, both $y_2$ and $y_{\ge 3}$ are $O(\log^2 n)$. By Lemma~\ref{lem:Hproperties}(c), we may assume that $t=O(\log^3 n)$ as otherwise ${\mathcal A}(x,y_2,y_{\ge 3},t)$ is a.a.s.\ empty.  By~\eqn{eq:case1},
\begin{align*}
{\sum}_1 &= O(\log^6 n_1) \sum_{0\le y_2< \log^2 n_1} \sum_{y_{\ge 3}<cy_2}  \ex |{\mathcal A}(x,y_2,y_{\ge 3},t)|\nonumber\\
&=O(\log^{2h+7} n_1)n_1^{h/2}\sum_{0\le y_2< \log^2 n_1}0.96^{y_2} \sum_{0\le y_{\ge 3}<cy_2}    \left(\dfrac{C_1 \log^{3}n_1}{y_{\ge 3}\sqrt{n_1}}\right)^{y_{\ge 3}}\nonumber\\
&=O(\log^{2h+7} n_1)n_1^{h/2}.
\end{align*}
For $\sum_2$, note that $t=O(y_2\log n_1)$ and thus $x=O(y_2\log n_1)$. Thus,
\begin{align*}
{\sum}_2%&=O(N^{h/2})\sum_{\log^2 N\le y_2\le \delta N} \sum_{y_{\ge 3}<cy_2} \sum_{t\le \delta N} \sum_{t/k\le x\le t/2} y_2^{h+1/2} \cdot  0.96^{y_2} \left(\dfrac{C_1 y_2^{3/2}}{y_{\ge 3}\sqrt{N}}\right)^{y_{\ge 3}}\\
=&O(n_1^{h/2})\sum_{\log^2 n_1\le y_2\le \delta n_1} \sum_{y_{\ge 3}<cy_2} y_2^2\log^2 n_1 \cdot y_2^{h+1/2} \cdot  0.96^{y_2} \left(\dfrac{C_1 y_2^{3/2}}{y_{\ge 3}\sqrt{n_1}}\right)^{y_{\ge 3}}.
%=& O(N^{h/2}\log^2 N)\sum_{\log^2 N\le y_2} \sum_{C_6\sqrt{y_2^2/N}\le y_{\ge 3}<cy_2} y_2^{h+5/2} \cdot  0.96^{y_2} \cdot 2^{-y_{\ge 3}} \\
%&+ O(N^{h/2}\log^2 N)\sum_{\log^2 N\le y_2} \sum_{y_{\ge 3}<C_6\sqrt{y_2^2/N}} 
\end{align*}
The above function is maximised at $y_{\ge 3}=C_1 y_2^{3/2}/e\sqrt{n_1}$. So
\begin{align*}
{\sum}_2
=&O(n_1^{h/2}) n_1\cdot \log^2 n_1 \sum_{\log^2 n_1\le y_2\le \delta n_1}  y_2^{h+5/2} \cdot  0.96^{y_2} \exp\left(C_1 y_2^{3/2}/e\sqrt{n_1}\right)\\
=&O(n_1^{h/2}) n_1\cdot \log^2 n_1 \sum_{\log^2 n_1\le y_2\le \delta n_1}  y_2^{h+5/2}  \exp\left(y_2(\ln(0.96)+(C_1/e) \sqrt{y_2/n_1})\right).
\end{align*}
Since $y_2\le \delta n_1$ and by choosing sufficiently small $\delta$,
\begin{align}
{\sum}_2
=&O(n_1^{h/2}) n_1\cdot \log^2 n_1 \sum_{\log^2 n_1\le y_2\le \delta n_1}  y_2^{h+5/2}  \exp\left(-\Theta(y_2)\right) =o(1). \lab{eq:large-y2-1}
\end{align}
It follows now that
\begin{align}
&\sum_{y_2\le \delta n_1} \sum_{y_{\ge 3}<cy_2} \sum_{t\le \delta n_1} \sum_{t/k\le x\le t/2}\ex |{\mathcal A}(x,y_2,y_{\ge 3},t)| =O(\log^{2h+7} n_1)n_1^{h/2}. \lab{eq:sum1}
\end{align}
%Special cases to discuss $y_2\le C\log n$ and $y_{\ge 3}\le h$, where $C>0$ is sufficiently large.

%\[
%\sum_{y_2}\sum_{1\le y_{\ge 3}<cy_2}\ex |{\mathcal A}(x,y_2,y_{\ge 3})| = O(\log^{h+1/2}  N^{h/2})
%\]

{\em Case 2: $y_{\ge 3}> cy_2$.} Note that this immediately implies that $y_{\ge 3}\ge 1$.  In this case, for some constants $C_2,C_3>0$,
\begin{align*}
\ex |{\mathcal A}(x,y_2,y_{\ge 3},t)| &= O(y_{\ge 3}^{h+1/2} n_1^{h/2})   \left(\dfrac{C_2 y_{\ge 3}}{y_{2}}\right)^{y_2} \left(\dfrac{C_3 y_{\ge 3}^{1/2}}{\sqrt{n_1}}\right)^{y_{\ge 3}}. \label{eq:case2}
\end{align*}
Similarly as before, $ \left(\dfrac{C_2 y_{\ge 3}}{y_{2}}\right)^{y_2}$ is maximised at $y_2=C_2y_{\ge 3}/e$. Thus,
\begin{align*}
\ex |{\mathcal A}(x,y_2,y_{\ge 3},t)| &= O(y_{\ge 3}^{h+1/2} n_1^{h/2})  \exp \left(\dfrac{C_2 y_{\ge 3}}{e}\right) \left(\dfrac{C_3 y_{\ge 3}^{1/2}}{\sqrt{n_1}}\right)^{y_{\ge 3}}\\
&=O(y_{\ge 3}^{h+1/2} n_1^{h/2}) \exp \left(y_{\ge 3} \left(\dfrac{C_2 }{e}+\frac{1}{2}\log(y_{\ge 3}/n_1)+\log C_3\right)\right).
\end{align*}
%If $y_2=0$ then $y_{\ge 3}> cy_2$ implies $y_{\ge 3}\ge 1$. Thus,
%\begin{align}
%\ex |{\mathcal A}(x,0,y_{\ge 3},t)| &= O(y_3^{h+1/2} N^{h/2})   \left(\dfrac{C_3 y_{\ge 3}^{1/2}}{\sqrt{N}}\right)^{y_{\ge 3}}.
%\end{align}
%If $y_2\ge 1$, the derivative of $\log\left(y_3^{h+1/2}\cdot  \left(\dfrac{C_2 y_{\ge 3}}{y_{2}}\right)^{y_2} \left(\dfrac{C_3 y_{\ge 3}^{1/2}}{\sqrt{N}}\right)^{y_{\ge 3}}\right)$ is
%\[
%\frac{2y_{\ge 3}\log(C_3\sqrt{y_{\ge 3}/N})+y_{\ge 3}+2y_2+2h+1}{2y_{\ge 3}}<0
%\]
%by $y_{\ge 3}\le \delta N$ and the fact that $\delta$ is sufficiently small.
%Thus, the expression on the right hand side of~\eqn{eq:case2} is maximised at $y_{\ge 3}=cy_2$. That is,
%\begin{align}
%\ex |{\mathcal A}(x,y_2,y_{\ge 3},t)| &= O(y_2^{h+1/2} N^{h/2})   C_4^{y_2} \left(\dfrac{C_5 y_{\ge 2}}{N} \right)^{cy_{\ge 2}/2}\quad \mbox{for some constants $C_4,C_5>0$.} \lab{eq2:case2}
%\end{align}
%\begin{align}
 %\sum_{y_{\ge 3}\ge 1} \sum_{t\le \delta N} \sum_{t/k\le x\le t/2}\ex |{\mathcal A}(x,0,y_{\ge 3},t)| =&\sum_{y_{\ge 3}\ge 1} \sum_{t\le y_{\ge 3}K\log n} \sum_{t/k\le x\le t/2} O(y_3^{h+1/2} N^{h/2})   \left(\dfrac{C_3 y_{\ge 3}^{1/2}}{\sqrt{N}}\right)^{y_{\ge 3}}\\
%=&O(N^{h/2}) \sum_{y_{\ge 3}\ge 1} y^2_{\ge 3}\log^2 n  \cdot y_3^{h+1/2} \left(\dfrac{C_3 y_{\ge 3}^{1/2}}{\sqrt{N}}\right)^{y_{\ge 3}}\\
%=&O(N^{h/2-1/2}\log^2 N).
%\end{align}
Hence, $\sum_{y_{\ge 3}\ge 1} \sum_{y_{2}\le c^{-1}y_{\ge 3}} \sum_{t\le \delta n_1} \sum_{t/k\le x\le t/2}\ex |{\mathcal A}(x,y_2,y_{\ge 3},t)|=\Sigma_3+\Sigma_4$ where
\begin{eqnarray}
{\sum}_3&=&\sum_{1\le y_{\ge 3}\le \log^2 n_1} \sum_{y_{2}\le c^{-1}y_{\ge 3}} \sum_{t\le \delta n_1} \sum_{t/k\le x\le t/2}\ex |{\mathcal A}(x,y_2,y_{\ge 3},t)|  \nonumber\\
&=&O(n_1^{h/2})\sum_{1\le y_{\ge 3}\le \log^2 n_1} \sum_{y_{2}\le c^{-1}y_{\ge 3}} \sum_{t\le \delta n_1} \sum_{t/k\le x\le t/2} y_{\ge 3}^{h+1/2}   \exp \left(\dfrac{C_2 y_{\ge 3}}{e}\right) \left(\dfrac{C_3 y_{\ge 3}^{1/2}}{\sqrt{n_1}}\right)^{y_{\ge 3}} \nonumber\\
&=&O(n_1^{h/2})\log^8 n_1\sum_{1\le y_{\ge 3}\le \log^2 n_1}  y_{\ge 3}^{h+1/2}   \exp \left(\dfrac{C_2 y_{\ge 3}}{e}\right) \left(\dfrac{C_3 y_{\ge 3}^{1/2}}{\sqrt{n_1}}\right)^{y_{\ge 3}}\nonumber\\
&=&O(n_1^{(h-1)/2}\log^8 n_1)=O(n_1^{(h-1)/2}\log^8 n_1), \label{eq:sum2}
\end{eqnarray}
and
\begin{eqnarray}
{\sum}_4&=&\sum_{y_{\ge 3}\ge \log^2 n_1} \sum_{y_{2}\le c^{-1}y_{\ge 3}} \sum_{t\le \delta n_1} \sum_{t/k\le x\le t/2}\ex |{\mathcal A}(x,y_2,y_{\ge 3},t)| \nonumber\\
&=&O(n_1^{h/2})\sum_{y_{\ge 3}> \log^2 n_1} \sum_{y_{2}\le c^{-1}y_{\ge 3}} \sum_{t\le \delta n_1} \sum_{t/k\le x\le t/2} O(y_{\ge 3}^{h+1/2}) \exp \left(y_{\ge 3} \left(\dfrac{C_2 }{e}+\frac{1}{2}\log(y_{\ge 3}/n_1)+\log C_3\right)\right)\nonumber\\
&=&O(n_1^{h/2}) n_1^3\sum_{y_{\ge 3}> \log^2 n_1} O(y_{\ge 3}^{h+1/2}) \exp \left(-\Omega(y_{\ge 3})\right)=o(1). \label{eq:large-y3-1} 
\end{eqnarray}
Now part (a) of the lemma follows by~\eqn{eq:sum1} and~\eqn{eq:sum2}, and part (b) of the lemma follows by~\eqn{eq:large-y2-1} and~\eqn{eq:large-y3-1} and by choosing $K=1/c$.
 \qed

%\bibliographystyle{plain}
%\bibliography{references}

 \end{document}